\newtheorem{theorem}{Theorem}[section]
\newtheorem{lemma}[theorem]{Lemma}
\newtheorem{proposition}{Proposition}[section]
\theoremstyle{definition}
\newtheorem{definition}[theorem]{Definition}
\theoremstyle{remark}
\newtheorem{remark}[theorem]{Remark}
\numberwithin{equation}{section}
\theoremstyle{definition}
\begin{document}

\title[Convergence estimates of a SL scheme for ES-BGK model for polyatomic molecules]{Convergence estimates of a semi-Lagrangian scheme for the ellipsoidal BGK model for polyatomic molecules}

\author[S. Boscarino]{Sebastiano Boscarino}
\address{Sebastiano Boscarino\\
Department of Mathematics and Computer Science  \\
University of Catania\\
95125 Catania, Italy} \email{boscarino@dmi.unict.it}

\author[S.-Y. Cho]{Seung-Yeon Cho}
\address{Seung-Yeon Cho\\
Department of Mathematics and Computer Science  \\
University of Catania\\
95125 Catania, Italy}
\email{chosy89@skku.edu}

\author[G. Russo]{Giovanni Russo}
\address{Giovanni Russo\\
Department of Mathematics and Computer Science  \\
University of Catania\\
95125 Catania, Italy} \email{russo@dmi.unict.it}

\author[S.-B. Yun]{Seok-Bae Yun}
\address{Seok-Bae Yun\\
Department of Mathematics\\ 
Sungkyunkwan University\\
 Suwon 440-746, Republic of Korea}
\email{sbyun01@skku.edu}

\begin{abstract}
%

In this paper, we propose a new semi-Lagrangian scheme for the  
polyatomic ellipsoidal BGK model. In order to avoid time step 
restrictions coming from convection term and small Knudsen number, we combine a semi-Lagrangian approach for the convection term with an implicit treatment for the relaxation term.
We show how to explicitly solve the implicit step, thus obtaining an efficient and stable scheme for any Knudsen number. 
We also derive an explicit error estimate on the convergence of the proposed scheme for every fixed value of the Knudsen number.

\end{abstract}
\maketitle

\footnotetext{\keywords{Keywords and phrases: BGK model, polyatomic ellipsoidal BGK model, Boltzmann equation, semi-Lagrangian scheme, error estimate, kinetic theory of gases}}

\section{Introduction}
\subsection{Polyatomic ES-BGK model}

The BGK model \cite{BGK} has been popularly employed for various flow problems of rarefied gas dynamics  in place of the Boltzmann equation since it reproduces the dynamics of the Boltzmann equation in a reliable manner at much lower computational cost. The importance of developing polyatomic versions of the BGK model 
has been recognized soon after the inception of the model - which is very natural since most of the gas molecules consists of several atoms - and the several attempts to derive polyatomic version of the BGK model have been proposed in the literature. The polyatomic generalization of the BGK model can be realized in various manners such as the introduction of new variables describing the internal energy due to the inner configuration of the molecules \cite{ALPP,BIP}, vibrational excitation \cite{BDMMMM}
, and reformulation into the gas mixture framework \cite{KPP,P}.
In this paper, we are interested in the polyatomic BGK model obtained from the so called ellipsoidal BGK model \cite{ALPP,BS,H} (Polyatomic ES-BGK model):
\begin{align}\label{A-1}
\begin{split}
\frac{\partial{f}}{\partial{t}} + v \cdot \nabla_x{f} &= \frac{A_{\nu,\theta}}{\kappa}\left(\mathcal{M}_{\nu,\theta}(f)-f\right),\cr
f(x,v,0,I) &= f_0(x,v,I).
\end{split}
\end{align}
The velocity-energy distribution function $f(x,v,t,I)$ represents the number density of particles in the phase space.
For simplicity, we assumed periodic boundary condition in $d$-dimensional space. Without loss of generality, the length of the domain is assumed to be one. The parameter $I \in \mathbb{R}_+$ is related to internal energy $\varepsilon$ due to rotation and vibration $\varepsilon(I)=I^{\frac{2}{\delta}}$, where $\delta>0$ represents the number of degrees of freedom for the internal motion of the molecules such as the rotation and vibration. Our independent variables $x$ and $v$  belong to phase space $(x,v)\in\mathbb{T}^d\times \mathbb{R}^3$, with $\mathbb{T}^d\equiv \mathbb{R}^d/\mathbb{Z}^d$, and $t\geq0$ denotes the time.
The Knudsen number $\kappa>0$ is the ratio between the mean free path of the gas molecules and the macroscopic length scale of the problem. 
We consider a collision frequency $A_{\nu,\theta}:= 1/ (1 - \nu + \nu\theta)$, for  $0<\theta \leq 1$ and $-\frac{1}{2}< \nu < 1$. The two parameters can be chosen to fit Prandtl number and transport coefficients computed by Chapmann-Enskog expansion of the equation Boltzmann equation. 
The polyatomic Gaussian $\mathcal{M}_{\nu,\theta}(f)$ is given by

\begin{align}\label{conti eg}
	\mathcal{M}_{\nu,\theta}(f):=\frac{\rho\Lambda_\delta}{\sqrt{\det\left(2 \pi \mathcal{T}_{\nu,\theta} \right)}(T_\theta)^\frac{\delta}{2}}\exp \left({-\frac{(v-U(x,t))^{\top}\mathcal{T}_{\nu,\theta}^{-1}(v-U(x,t))}{2} -\frac{I^{\frac{2}{\delta}}}{T_\theta}}\right),
\end{align}
where $\Lambda_{\delta}$ is a normalizing constant defined by
\[
\Lambda_{\delta}^{-1}:=\int_{\mathbb{R}_+} e^{-I^{\frac{2}{\delta}}}dI.
\]
The macroscopic local density $\rho(x,t)$, bulk velocity $U(x,t)$, stress tensor $\Theta(x, t)$ and internal
energy $E_\delta(x, t)$ are defined as follows:
\begin{align*}
\begin{split}
\rho(x,t)&:= \int_{\mathbb{R}^3\times\mathbb{R}_+}f(x,v,t,I)dvdI,\cr
\rho(x,t) U(x,t) &:= \int_{\mathbb{R}^3\times\mathbb{R}_+}vf(x,v,t,I)dvdI,\cr
\rho(x,t) \Theta(x,t) &:= \int_{\mathbb{R}^3\times\mathbb{R}_+}\left(v-U(x,t)\right)\otimes\left(v-U(x,t)\right) f(x,v,t,I)dvdI,\cr
E_\delta(x,t) &:= \int_{\mathbb{R}^3\times\mathbb{R}_+}\left(\frac{1}{2}|v-U(x,t)|^2 + I^{\frac{2}{\delta}}\right)f(x,v,t,I)dvdI.
\end{split}
\end{align*}
The internal energy $E_{\delta}$ consists of the translational energy $E_{tr}$ and the non-translational energy $E_{I,\delta}$:
\begin{align*}
E_{tr} &:= \int_{\mathbb{R}^3\times\mathbb{R}_+}\frac{1}{2}|v-U(x,t)|^2f(x,v,t,I) dvdI,\cr
E_{I,\delta} &:= \int_{\mathbb{R}^3\times\mathbb{R}_+} I^{\frac{2}{\delta}}f(x,v,t,I)dvdI.
\end{align*}
The corresponding temperatures
$T_\delta$, $T_{tr}$ and $T_{I,\delta}$ are defined by
\[
E_\delta =: \frac{3+\delta}{2}\rho T_\delta, \quad E_{tr} =: \frac{3}{2}\rho T_{tr}, \quad E_{I,\delta} =: \frac{\delta}{2}\rho T_{I,\delta}.
\]
Note that $T_\delta$ is the convex combination of $T_{tr}$ and $T_{I,\delta}$:
\begin{align*}
T_\delta = \frac{3}{3+\delta} T_{tr} + \frac{\delta}{3+\delta}T_{I,\delta}.
\end{align*}
We also define the relaxation temperature $T_\theta$ and the temperature tensor $\mathcal{T}_{\nu,\theta}$ as follows:
\begin{align*}
T_\theta &= \theta T_\delta + (1-\theta)T_{I,\delta},\cr
\mathcal{T}_{\nu,\theta}&=\theta T_\delta Id + (1-\theta)\big\{(1-\nu)T_{tr}Id + \nu \Theta \big\}.
\end{align*}
where $Id$ is a $3\times 3$ identity matrix. The polyatomic relaxation operator has five-dimensional collision invariants:
\begin{align*} 
	 \int_{\mathbb{T}^d\times\mathbb{R}^3\times\mathbb{R}_+} \big(\mathcal{M}_{\nu,\theta}(f)- f\big) \begin{pmatrix}
	 	1\\v\\\frac{1}{2}|v|^2+I^{\frac{2}{\delta}}
	 \end{pmatrix} dxdvdI=0,
\end{align*}
so that the conservation laws hold for mass, momentum and energy:
\[
\frac{d}{dt}  \int_{\mathbb{T}^d\times\mathbb{R}^3\times\mathbb{R}_+} f \phi(v,I) dxdvdI=0.
\]
The celebrated H-theorem was first verified in \cite{ALPP} (See also \cite{BS2000,BS,PY1,Y})
\[
\frac{d}{dt}\int_{\mathbb{R}^3\times\mathbb{R}_+}f\ln{f}dvdI =  \int_{\mathbb{R}^3\times\mathbb{R}_+} \left(\mathcal{M}_{\nu,\theta}(f)-f\right) \ln{f} dvdI   \leq 0.
\]

We note that this model reduces to the monatomic ES-BGK model \cite{H} when $\theta = 0$. On the other hand, if we take $\nu = \theta = 0$ and integrate both sides of \eqref{A-1} against $I$, the original BGK model is recovered \cite{BGK}.
It is also interesting that there is a dichotomy in the time asymptotic state of $f$ depending on $\theta$ (see \cite{PY1}). For $0<\theta\leq 1$, $f$ converges to $\mathcal{M}_{0,1}(f)$:
\begin{align*}
	\mathcal{M}_{0,1}(f):=\frac{\rho\Lambda_\delta}{\left(2 \pi T_{\delta} \right)^\frac{3}{2}(T_\delta)^\frac{\delta}{2}}\exp \left({-\frac{|v-U(x,t)|^2}{2T_{\delta}} -\frac{I^{\frac{2}{\delta}}}{T_{\delta}}}\right),
\end{align*}
while if $\theta=0$, its time asymptotic limit is the isothermal equalibrium $\mathcal{M}_{0,0}(f)$:
\begin{align*}
	\mathcal{M}_{0,0}(f):=\frac{\rho\Lambda_\delta}{\left(2 \pi T_{tr} \right)^\frac{3}{2}(T_{I,\delta})^\frac{\delta}{2}}\exp \left({-\frac{|v-U(x,t)|^2}{2T_{tr}} -\frac{I^{\frac{2}{\delta}}}{T_{I,\delta}}}\right).
\end{align*}



\subsection{Implicit semi-Lagrangian scheme}
Several methods have been adopted for numerical solutions of \eqref{A-1}. In \cite{KA,KKA}, the authors adopted iterative schemes to find the steady state solutions. When dealing with time-dependent problems, explicit schemes can be adopted if the Knudsen number is not too small \cite{ABLP,KAG}.  
On the other hand, if one is interested in small value of $\kappa$, then an implicit treatment of collision term is necessary in order to avoid excessive restrictions on the time step. Splitting schemes can be used in which an explicit convection step is followed by an implicit relaxation step \cite{CL}. Because during the relaxation step mass momentum and energy are constant, the solution of the implicit step is relatively easy. However, splitting schemes have the drawback that for small Knudsen number they are restricted to the first order accuracy in time \cite{CJR,J}. Accuracy can be improved for small Knudsen number using implicit explicit Runge-Kutta schemes \cite{BIP}. In this paper, the authors use an Eulerian framework in which convection terms are treated explicitly and collision term is treated implicitly. The drawback of Eulerian schemes is the CFL-type time step restriction $\left|v\frac{\Delta t}{\Delta x}\right|<1$ imposed by the convection term.
To overcome these difficulties, we propose a semi-Lagrangian method with an implicit treatment of the relaxation term of the following form: 
\begin{align}\label{implicit form}
	\frac{f_{i,j,k}^{n+1}-\tilde{f}_{i,j,k}^n}{\Delta t} = \frac{A_{\nu,\theta}}{\kappa} \left(\mathcal{M}_{\nu,\theta}(f_{i,j,k}^{n+1})-f_{i,j,k}^{n+1}\right),
\end{align}	
where $f_{i,j,k}^{n+1}$ is the discrete solution of the scheme, $\tilde{f}_{i,j,k}^{n}$ is the approximation of the discrete solution on the foot of characteristic,
and $\mathcal{M}_{\nu,\theta}(f_{i,j,k}^{n+1})$ denotes the numerical polyatomic ellipsoidal Gaussian (See Section 2 for precise definitions.)
However, this implicit scheme requires to solve 
non-linear systems.

To overcome this difficulty, we observe that the polyatomic ellipsoidal Gaussian constructed from  $f_{i,j,k}^{n+1}$ in \eqref{implicit form}
can be replaced by the polyatomic ellipsoidal Gaussian constructed form $\tilde{f}_{i,j,k}^{n}$ up to small error, which making the equation solvable as
\begin{align*}
	f_{i,j,k}^{n+1} =\frac{\kappa\tilde{f}_{i,j,k}^n +   A_{\nu,\theta} \Delta t \mathcal{M}_{\nu,\theta}(\tilde{f}_{i,j,k}^{n})}{\kappa + A_{\nu,\theta} \Delta t}.
\end{align*}
Note that the proposed scheme for the polyatomic ES-BGK model reduces to the semi-Lagrangian scheme for monatomic BGK model in \cite{GRS,RS,SP} and semi-Lagrangian scheme for monatomic ES-BGK model \cite{RY} by taking appropriate values of $\nu$ and $\theta$ and integrating it over $I$ variable. 


The main result of this paper is the derivation of the error estimate based on $L_q^\infty$-norm (see notation in section \ref{notation}), which is stated in Theorem \ref{T.3.2} as follows:
\begin{align*}
\|f^{N_t} - f(T^f)\|_{L_q^\infty} &\leq  C \bigg( \frac{(\Delta x)^2}{\Delta t} + (\Delta x)^2 + \Delta v + \Delta I + \Delta t\bigg),
\end{align*}	 
where $C$ is a constant depending on $T^f,q,\delta,\kappa,\theta,\nu,\Delta t$, but can be uniformly bounded regardless of $\Delta t>0$. 
The main ingredient of the convergence proof is the establishment of the following uniform stability estimate of the discrete solution (see section \ref{stability sec}): 
\begin{align*}
	C_0^1e^{-\frac{A_{\nu,\theta}}{\kappa} T^f} e^{-C_0^2(|v_j|^a +I_k^b)}\leq\tilde{f}_{i,j,k}^n   \leq e^{\frac{ C_{\mathcal{M}}A_{\nu,\theta}  }{\kappa + A_{\nu,\theta} \Delta t}T^f} \|f_0\|_{L_q^\infty}(1+|v_j|^2)^{-q/2}.
\end{align*}
We note that, unlike most of numerical stability estimates, the uniform lower bound is important since it is crucially used to prove that the polyatomic temperature never vanishes
(see Lemma \ref{L.5.14}):
\begin{align*}
	(\tilde{T}_\delta)_i^n \geq  \bigg( \frac{1}{2}\frac{\bar{C}_{a,b} C_0^1}{C_\delta \|f_0\|_{L_{q}^{\infty}}}e^{-\big(\frac{1}{\kappa}  +\frac{ (C_{\mathcal{M}}-1) }{\kappa + A_{\nu,\theta} \Delta t}\big)T^f}\bigg)^{\frac{2}{3+\delta}},
\end{align*}
so that the discrete polyatomic ellipsoidal Gaussian never degenerates into Dirac delta.




We close this subsection with a brief review on implicit semi-Lagrangian schemes for BGK models. 
In \cite{SP}, high order semi-Lagrangian methods were constructed using diagonally implicit Runge-Kutta schemes \cite{PR} and high order non-oscillatory spatial reconstruction \cite{CRG}. Owing to the L-stability property of time discretization, the resulting schemes enable one to use a large time step even in the fluid regime. In \cite{GRS}, multi-step time discretization such as BDF methods were adopted in the semi-Lagrangian framework. The performance of such methods was verified through boundary value problems in \cite{GRS2,RF}. In \cite{BCRY}, such semi-Lagrangian schemes were employed as a predictor scheme corrected it by a conservative procedure to obtain an exactly conservative scheme at the discrete level. We also refer to \cite{GRS3} for semi-Lagrangian methods applied to gas mixtures and reactive flows.

The convergence estimate for the original monatomic BGK model was investigated  in \cite{RSY}. The argument has been simplified and applied to the more complicate case of the ES-BGK model \cite{RY}, which is the main motivation of the current work. These two results seem to be the only available convergence estimates for fully discrete schemes for spatially inhomogeneous collisional kinetic equations.


The semi-Lagrangian methods have been widely used also for the numerical solutions of Vlasov-type equations \cite{CMS,FSB,QC,QS,SRB,TRQ}. We refer to \cite{DP} for a nice survey on numerical schemes for kinetic equations.
\newline

\subsection{Notation}\label{notation}
Throughout this paper, we use the following notations :
\begin{itemize}
	\item $C$ denotes a constant which can be explicitly computable.
	\item $C_{a,b,\dots}$, $\bar{C}_{a,b,\dots}$ denote a constant that depend on $a, b,\dots$.
	\item We use lower indices $i,j,k$ for space, velocity, internal energy variables and an upper index $n$ for time variable, respectively.	
	\item We write the velocity vector $v$ as $v\equiv(v^1,v^2,v^3)$.
	\item $T^f$ denotes the final time of the numerical experiment.
	\item 
		The relation $A\leq B$ for $3\times 3$ matrices $A$ and $B$ means that $B-A$ is positive definite, i.e., $k^{\top} (B-A) k \geq 0$  for all $k \equiv (k^1,k^2,k^3)^{\top}\in \mathbb{R}^3$. 
	\item For $N,q \in \mathbb{N}$, the weighted $L^\infty$-Sobolev norm for continuous solution is defined by
	\begin{align*}
		\|f(t)\|_{L_q^\infty} &:= \sup_{x,v,I}|f(x,v,t,I)(1 + |v|^2 + I^{\frac{2}{\delta}})^{\frac{q}{2}}|, \cr
		\|f(t)\|_{{N,q}}^{\infty} &:= \sum_{|\alpha|+|\beta|+\gamma\leq
			N}\sup_{x,v,I}|\partial(\alpha,\beta,\gamma)f(x,v,t,I)(1+|v|+I^{\frac{1}{\delta}})^q|,
	\end{align*}
	where $\alpha, \beta, \gamma \in \mathbb{Z}_+ \times \mathbb{Z}_+^3 \times \mathbb{Z}_+$, and the differential operator $\partial(\alpha,\beta,\gamma)$ stands for $\partial_x^\alpha \partial_v^\beta \partial_I^\gamma$. Indeed, $\|f(t)\|_{L_q^\infty}=\|f(t)\|_{{0,q}}^{\infty}$.
	\item The weighted $L_q^\infty$-Sobolev norm for discrete solution is defined by
	\begin{align*}
		\|f^n\|_{L_q^\infty} &:= \sup_{i,j,k}|f_{i,j,k}^n(1 + |v_j|^2 + I_k^{\frac{2}{\delta}})^{\frac{q}{2}}|,
	\end{align*}
	where $f_{i,j,k}^n$ is a numerical solution of $f(x_i, v_j, t^n, I_k)$.
	\item 
	To measure the distance between discrete and continuous solutions, we use the following supremum on grid points:
	\begin{align*}
		\|f^{n} - f(t^{n})\|_{L_q^\infty}= \sup_{i,j,k}|f_{i,j,k}^n-f(x_i,v_j,t^n,I_k)|(1 + |v_j|^2 + I_k^{\frac{2}{\delta}})^{\frac{q}{2}}.
	\end{align*}
\end{itemize}

	This paper is organized as follows: In Section 2, we derive a first order semi-Lagrangian scheme for the polyatomic ES-BGK model. Section 3 is devoted to the statement of the main result of this paper. In the following Section 4 and 5, we present several technical estimates on the discrete solution and its macroscopic variables. In Section 6, we rewrite the polyatomic ES-BGK model \eqref{A-1} for the easy comparison of continuous and discrete solution. Then, in Section 7, the difference between the continuous and discrete Gaussians is estimated. Finally, in Section 8, we prove our main theorem.




\begin{section}{Description of the numerical scheme}
	
	\subsection{Discretization}
	For velocity variables, we take same mesh spacing $\Delta v$ in all directions, while, for the internal energy variable, we use a uniform mesh of size $\Delta I$. For space, one-dimensional periodic unit interval is considered with a uniform mesh $\Delta x$. We assume a fixed time step $\Delta t$. Then,
	\begin{align*}
		t^n&=n\Delta t, \quad n=0,1,\dots,N_t,\cr
		x_i&=i \Delta x, \quad i=0,\pm1,\dots,\pm N_x, \pm (N_x+1), \dots
	\end{align*}
	where $N_t \Delta t= T^f$, $N_x \Delta x = 1$. 
	Note that we consider space discretization on the whole spatial domain and then impose periodicity for technical simplicity in the convergence proof.

	For velocity and internal energy variables, we use
\begin{align*}
	v_j\equiv(v_j^1,v_j^2,v_j^3)&\equiv(j_1\Delta v,j_2\Delta v,j_3\Delta v),\quad  (j_1,j_2,j_3) \in  \mathbb{Z}^3,\cr
	I_k&=k\Delta I, \quad k=0, 1, 2, \dots.
\end{align*}

To be more concise, we introduce the following notations:
	\begin{definition}\label{B-2}
		\begin{enumerate}
			\item Let $x(i,j):=x_i-v_j^1 \Delta t$ and $s\equiv s(i,j)$ be the index such that 
			\begin{align*}
				x(i,j) \in [x_s, x_{s + 1}).
			\end{align*}
			\item Let $\tilde{f}_{i,j,k}^n$ be the linear interpolation of $f_{s,j,k}^n$ and $f_{s+1,j,k}^n$ on $x(i,j)$ at time $t^n$:
			\begin{align*}
				\tilde{f}_{i,j,k}^n:= a_j f_{s,j,k}^n + (1-a_j) f_{s+1,j,k}^n,
			\end{align*}		
		where $a_j:=(x_{s+1}-x(i,j))/\Delta x$. Note that there is only $j$ dependence on $a_j$ due to the use of uniform grid in space variable.

		\end{enumerate}
	\end{definition}
\subsection{Implicit semi-Lagrangian scheme}
Our scheme reads
\begin{align}\label{main scheme}
	f_{i,j,k}^{n+1} =\frac{\kappa\tilde{f}_{i,j,k}^n +   A_{\nu,\theta} \Delta t \mathcal{M}_{\nu,\theta}(\tilde{f}_{i,j,k}^{n})}{\kappa + A_{\nu,\theta} \Delta t}.
\end{align}
where $A_{\nu,\theta}:= 1/ (1 - \nu + \nu\theta)$ for  $0<\theta \leq 1$ and $-\frac{1}{2}< \nu < 1$.
\begin{align*}
	\tilde{f}_{i,j,k}^n:= a_j f_{s,j,k}^n + (1-a_j) f_{s+1,j,k}^n.
\end{align*}
Note that $a_j$ and $s$ are defined in \eqref{B-2}. The discrete ellipsoidal Gaussian based on $\{\tilde{f}_{i,j,k}^n\}$ is given by
\begin{align}\label{approx M}
\begin{split}
		\mathcal{M}_{\nu,\theta}(\tilde{f}_{i,j,k}^{n}) :&= \frac{\tilde{\rho}_i^n\Lambda_\delta}{\sqrt{\det\left(2 \pi (\tilde{\mathcal{T}}_{\nu,\theta})_i^n \right)}\big((\tilde{T}_\theta)_i^n\big)^\frac{\delta}{2}} \exp \left({-\frac{(v_j-\tilde{U}_i^n)^{\top} ((\tilde{\mathcal{T}}_{\nu,\theta})_i^n )^{-1}(v_j-\tilde{U}_i^n)}{2} -\frac{I_k^{\frac{2}{\delta}}}{(\tilde{T}_\theta)_i^n}}\right).
\end{split}
\end{align}
where $\Lambda_{\delta}$ is a normalizing constant defined by
\[
\Lambda_{\delta}^{-1}:=\int_{\mathbb{R}_+} e^{-I^{\frac{2}{\delta}}}dI.
\]
The macroscopic variables computed from $\{\tilde{f}_{i,j,k}^n\}$ are defined as follows:

\begin{align}\label{tilde TTT}
	\begin{array}{lll}
		\text{$\bullet$ Mass:} &\qquad\qquad\qquad  &\qquad\qquad \\
			&\tilde{\rho}_i^n:= \sum_{j,k} \tilde{f}_{i,j,k}^n (\Delta v)^3 \Delta I.&  \\
		\text{$\bullet$ Momentum:} &  & \\
			&\tilde{\rho}_i^n\tilde{U}_i^n:= \sum_{j,k} \tilde{f}_{i,j,k}^n v_j(\Delta v)^3 \Delta I.&\\
		\text{$\bullet$ Stress tensor:} &  & \\
			&\tilde{\rho}_i^n\tilde{\Theta}_i^n = \sum_{j,k} \tilde{f}_{i,j,k}^{n}(v_j-\tilde{U}_i^{n}) \otimes (v_j-\tilde{U}_i^{n})(\Delta v)^3 \Delta I.&\\
		\text{$\bullet$ Polyatomic temperature:} &  & \\
			&(\tilde{T}_\delta)_i^n = \frac{3}{3+\delta} (\tilde{T}_{tr})_i^n + \frac{\delta}{3+\delta}(\tilde{T}_{I,\delta})_i^n,&\\
		\text{where} &  & \\
		&(\tilde{T}_{tr})_i^n:= \frac{2}{3}\frac{1}{\tilde{\rho}_i^n}\sum_{j,k} \tilde{f}_{i,j,k}^{n} \frac{|v_j-\tilde{U}_i^n|^2}{2}(\Delta v)^3 \Delta I,&\\
		&(\tilde{T}_{I,\delta})_i^n:=  \frac{2}{\delta}\frac{1}{\tilde{\rho}_i^n}\sum_{j,k} \tilde{f}_{i,j,k}^{n} I_k^{\frac{2}{\delta}}  (\Delta v)^3 \Delta I.&\\
		\text{$\bullet$ Relaxation temperature:} &  & \\
		&(\tilde{T}_\theta)_i^n:= \theta (\tilde{T}_\delta)_i^n + (1-\theta)(\tilde{T}_{I,\delta})_i^n.&\\
		\text{$\bullet$ Polyatomic temperature tensor:} &  & \\
		&(\tilde{\mathcal{T}}_{\nu,\theta})_i^n:=\lambda\theta (\tilde{T}_\delta)_i^n Id + \lambda(1-\theta)(1-\nu)(\tilde{T}_{tr})_i^nId  +(1-\theta)\bar{\nu}  \tilde{\Theta}_i^n,&\\
	\end{array}
\end{align}
For notational simplicity, we also introduce 
\begin{align*}
	\lambda \equiv \lambda(\nu,\theta,\kappa,\Delta t):= \frac{\kappa + A_{\nu,\theta}\Delta t}{\Delta t + \kappa}, \quad \bar{\nu} \equiv \bar{\nu}(\nu,\kappa,\Delta t):= \frac{\kappa \nu }{\Delta t + \kappa}.
\end{align*}
Since the initial step can be taken to be arbitrarily correct, we assume for technical simplicity that
the initial step is approximated as follows to guarantee that  no error arises in the initial approximation of the initial data:

\begin{align}\label{no initial err}
	\begin{array}{lll}
		\text{$\bullet$ initial distribution:} &\qquad\qquad\qquad  &\qquad\qquad \\
		&f_{i,j,k}^0=f_0(x_i,v_j,I_k), \quad
		\tilde{f}_{i,j,k}^0=f_0(x_i-v_j^1\Delta t,v_j,I_k).&  \\
		\text{$\bullet$ Mass:} &  & \\
		&\tilde{\rho}_i^0= \int_{\mathbb{R}^3\times\mathbb{R}_+}f_0(x_i-v^1\Delta t,v,I)dvdI, &\\
		\text{$\bullet$ Momentum:} &\qquad\qquad\qquad  &\qquad\qquad \\
		&\tilde{\rho}_i^n\tilde{U}_i^n:= \tilde{\rho}_i^0\tilde{U}_i^0 = \int_{\mathbb{R}^3\times\mathbb{R}_+}vf_0(x_i-v^1\Delta t,v,I)dvdI.&\\
		\text{$\bullet$ Stress tensor:} &  & \\
		&\tilde{\rho}_i^0\tilde{\Theta}_i^0 = \int_{\mathbb{R}^3\times\mathbb{R}_+}\left(v-\tilde{U}_i^0\right)\otimes\left(v-\tilde{U}_i^0\right) f_0(x_i-v^1\Delta t,v,I)dvdI.&\\
		\text{$\bullet$ Polyatomic temperature:} &  & \\
		&(\tilde{T}_\delta)_i^0 = \frac{3}{3+\delta} (\tilde{T}_{tr})_i^0 + \frac{\delta}{3+\delta}(\tilde{T}_{I,\delta})_i^0,&\\
		\text{where} &  & \\
		&(\tilde{T}_{tr})_i^0:= \frac{2}{3}\frac{1}{\tilde{\rho}_i^0}\int_{\mathbb{R}^3\times\mathbb{R}_+} \frac{|v-\tilde{U}_i^0|^2}{2} f_0(x_i-v^1\Delta t,v,I) dvdI,&\\
		&(\tilde{T}_{I,\delta})_i^0:=  \frac{2}{\delta}\frac{1}{\tilde{\rho}_i^n}\int_{\mathbb{R}^3\times\mathbb{R}_+} I^{\frac{2}{\delta}}  f_0(x_i-v^1\Delta t,v,I) dvdI.&\\
		\text{$\bullet$ Relaxation temperature:} &  & \\
		&(\tilde{T}_\theta)_i^0:= \theta (\tilde{T}_\delta)_i^0 + (1-\theta)(\tilde{T}_{I,\delta})_i^0.& \\
		&&\\
		\text{$\bullet$ Polyatomic temperature tensor:} &  & \\
		&(\tilde{\mathcal{T}}_{\nu,\theta})_i^0:=\lambda\theta (\tilde{T}_\delta)_i^0 Id + \lambda(1-\theta)(1-\nu)(\tilde{T}_{tr})_i^0Id  +(1-\theta)\bar{\nu}  \tilde{\Theta}_i^0.&\\
		\text{where} &  & \\
		&\lambda \equiv \lambda(\nu,\theta,\kappa,\Delta t):= \frac{\kappa + A_{\nu,\theta}\Delta t}{\Delta t + \kappa}, \quad \bar{\nu} \equiv \bar{\nu}(\nu,\kappa,\Delta t):= \frac{\kappa \nu }{\Delta t + \kappa}.&	
	\end{array}
\end{align}

	\subsection{Derivation of the first order scheme} 
	Now we consider how the scheme \eqref{main scheme} is derived. Throughout this paper, we focus on one-dimensional spatial domain $(d=1)$. We start from the backward characteristic of (\ref{A-1}):
	\begin{align}\label{B-3}
		\begin{split}
		\frac{df}{ds}&=\frac{A_{\nu,\theta}}{\kappa}\left(\mathcal{M}_{\nu,\theta}(f)-f\right),\cr
		\frac{dx}{ds}&=v_j^1,\cr 
		\frac{dv}{ds}&=\frac{dI}{ds}=0,
		\end{split}
	\end{align}
Here, one can easily have
\[\displaystyle x(s)\equiv x_i-v_j^1(t^{n+1}-s), \quad v(s)\equiv v_j, \quad I(s)\equiv I_k.\]
To solve (\ref{B-3}), considering the stiffness coming from $\kappa$, we apply the implicit Euler method:
\begin{align}\label{B-4}
	\frac{f_{i,j,k}^{n+1}-\tilde{f}_{i,j,k}^n}{\Delta t} = \frac{A_{\nu,\theta}}{\kappa} \left(\mathcal{M}_{\nu,\theta}(f_{i,j,k}^{n+1})-f_{i,j,k}^{n+1}\right).
\end{align}	
where the discrete ellipsoidal Gaussian is given by
\begin{align*}
	\mathcal{M}_{\nu,\theta}(f_{i,j,k}^n)&=\frac{\rho_i^n\Lambda_\delta}{\sqrt{\det\left(2 \pi (\mathcal{T}_{\nu,\theta})_i^n \right)}((T_\theta)_i^n)^\frac{\delta}{2}}\exp \left({-\frac{(v_j-U_i^n)^{\top}((\mathcal{T}_{\nu,\theta})_i^n)^{-1}(v_j-U_i^n)}{2} -\frac{I_k^{\frac{2}{\delta}}}{(T_\theta)_i^n}}\right)
\end{align*}
with the discrete normalizing factor: 
\begin{equation*}
\begin{split}
\Lambda_{\delta}^{-1}&=\sum_k e^{-I_k^{\frac{2}{\delta}}}\Delta I,\quad 
\end{split}
\end{equation*}

and the macroscopic fields are defined by
\begin{align*} 
	\begin{array}{lll}
		\text{$\bullet$ Mass:} &\qquad\qquad\qquad  &\qquad\qquad \\
		&\rho_i^n:= \sum_{j,k} f_{i,j,k}^n (\Delta v)^3 \Delta I.&  \\
		\text{$\bullet$ Momentum:} &  & \\
		&\rho_i^n U_i^n:= \sum_{j,k} f_{i,j,k}^n v_j(\Delta v)^3 \Delta I.&\\
		\text{$\bullet$ Stress tensor:} &  & \\
		&\rho_i^n\Theta_i^n = \sum_{j,k} f_{i,j,k}^{n}(v_j-U_i^{n}) \otimes (v_j-U_i^{n})(\Delta v)^3 \Delta I.&\\
		\text{$\bullet$ Polyatomic temperature:} &  & \\
		&(T_\delta)_i^n = \frac{3}{3+\delta} (T_{tr})_i^n + \frac{\delta}{3+\delta}(T_{I,\delta})_i^n,&\\
		\text{where} &  & \\
		&(T_{tr})_i^n:= \frac{2}{3}\frac{1}{\rho_i^n}\sum_{j,k} f_{i,j,k}^{n} \frac{|v_j-U_i^n|^2}{2}(\Delta v)^3 \Delta I,&\\
		&(T_{I,\delta})_i^n:=  \frac{2}{\delta}\frac{1}{\rho_i^n}\sum_{j,k} f_{i,j,k}^{n} I_k^{\frac{2}{\delta}}  (\Delta v)^3 \Delta I.&\\
		\text{$\bullet$ Relaxation temperature:} &  & \\
		&(T_\theta)_i^n:= \theta (T_\delta)_i^n + (1-\theta)(T_{I,\delta})_i^n.&\\
		\text{$\bullet$ Polyatomic temperature tensor:} &  & \\
		&(\mathcal{T}_{\nu,\theta})_i^n:=\theta (T_\delta)_i^n Id + (1-\theta)(1-\nu)(T_{tr})_i^nId  +(1-\theta)\nu  \Theta_i^n,&\\
	\end{array}
\end{align*}


We note that \eqref{B-4} involves high computational cost since it is implicit form. To transform this implicit scheme into an explicitly computable scheme with beneficial stability properties preserved, we adopt the argument developed in \cite{BCRY,GRS,PP,RY,SP} to our polyatomic setting.
	
We start with conservative quantities. We multiply both sides of \eqref{B-4} by collision invariants:
$$\phi_{j,k}:=\left(1,v_j,\frac{1}{2}|v_j|^2+I_k^{\frac{2}{\delta}}\right)$$ 
	and take a summation over $j,k$ to derive
	\begin{align*}
		\sum_{j,k}\left(f_{i,j,k}^{n+1}-\tilde{f}_{i,j,k}^n\right) \phi_{j,k} (\Delta v)^3 \Delta I = \sum_{j,k}\frac{A_{\nu,\theta}\Delta t}{\kappa} \left(\mathcal{M}_{\nu,\theta}(f_{i,j,k}^{n+1})-f_{i,j,k}^{n+1}\right) \phi_{j,k} (\Delta v)^3 \Delta I.
	\end{align*}	
	Since the right hand side vanishes for enough $v$ and $I$ nodes, we have
    \begin{align}\label{rhouE}
\begin{split}
	    	\rho_i^{n+1} &= \tilde{\rho}_i^n:= \sum_{j,k} \tilde{f}_{i,j,k}^n (\Delta v)^3 \Delta I, \cr
	U_i^{n+1} &= \tilde{U}_i^n:= \frac{1}{\tilde{\rho}_i^n}\sum_{j,k} \tilde{f}_{i,j,k}^n v_j(\Delta v)^3 \Delta I,\cr
	(E_{\delta})_i^{n+1} &= (\tilde{E}_{\delta})_i^n:=\sum_{j,k} \tilde{f}_{i,j,k}^{n} \left(\frac{|v_j-\tilde{U}_i^n|^2}{2} + I_k^{\frac{2}{\delta}}\right) (\Delta v)^3 \Delta I.
\end{split}
    \end{align}
Using this, we approximate $(T_\delta)_i^{n+1}$, $(T_{tr})_i^{n+1}$ and $(T_{I,\delta})_i^{n+1}$ as follows:

\begin{align}\label{TTT}
	\begin{split}
	(T_\delta)_i^{n+1}&= (\tilde{T}_{\delta})_i^n := \frac{2}{3+\delta}\frac{1}{\tilde{\rho}_i^n}\sum_{j,k} \tilde{f}_{i,j,k}^{n} \left(\frac{|v_j-\tilde{U}_i^n|^2}{2} + I_k^{\frac{2}{\delta}}\right) (\Delta v)^3 \Delta I\cr
	(T_{tr})_i^{n+1}&=\frac{2}{3}\frac{1}{\tilde{\rho}_i^n}\sum_{j,k} f_{i,j,k}^{n+1} \frac{|v_j-\tilde{U}_i^n|^2}{2}(\Delta v)^3 \Delta I\cr
	&\approx  \frac{2}{3}\frac{1}{\tilde{\rho}_i^n}\sum_{j,k} \tilde{f}_{i,j,k}^{n} \frac{|v_j-\tilde{U}_i^n|^2}{2}(\Delta v)^3 \Delta I,\cr
	&=:(\tilde{T}_{tr})_i^n,\cr
	(T_{I,\delta})_i^{n+1}&=\frac{2}{\delta}\frac{1}{\tilde{\rho}_i^n}\sum_{j,k} f_{i,j,k}^{n+1} I_k^{\frac{2}{\delta}}  (\Delta v)^3 \Delta I\cr
	&\approx  \frac{2}{\delta}\frac{1}{\tilde{\rho}_i^n}\sum_{j,k} \tilde{f}_{i,j,k}^{n} I_k^{\frac{2}{\delta}}  (\Delta v)^3 \Delta I\cr
	&=:(\tilde{T}_{I,\delta})_i^n .
	\end{split}
\end{align}
Note that the approximations for $(T_{tr})_i^{n+1}$ and $(T_{I,\delta})_i^{n+1}$ can be justified because we are considering a first order scheme.
Now, we turn to the approximation of the stress tensor $\Theta_i^{n+1}$. Although it is a non-conservative quantity, we can approximate it in a legitimate way as in \cite{RY}. For this, we introduce 
$$\xi_{ij}^{n+1}:=(v_j-U_i^{n+1}) \otimes (v_j-U_i^{n+1})$$ 
and multiply this to (\ref{B-4}) to derive 	
\begin{align}\label{2.5}
	\sum_{j,k} (f_{i,j,k}^{n+1}-\tilde{f}_{i,j,k}^{n})\xi_{ij}^{n+1} (\Delta v)^3 \Delta I=\sum_{j,k}\frac{A_{\nu,\theta} \Delta t}{\kappa} \left(\mathcal{M}_{\nu,\theta}(f_{i,j,k}^{n+1})-f_{i,j,k}^{n+1}\right)\xi_{ij}^{n+1}
	(\Delta v)^3 \Delta I.
\end{align}
Recalling the relation $U_i^{n+1} = \tilde{U}_i^n$, we obtain
\[
\xi_{ij}^{n+1}=(v_j-U_i^{n+1}) \otimes (v_j-U_i^{n+1}) = (v_j-\tilde{U}_i^n) \otimes (v_j-\tilde{U}_i^n) =: \tilde{\xi}_{ij}^{n}.
\]
This implies that the second term on the left in \eqref{2.5} becomes
\begin{align}\label{app 1}
	\sum_{j,k} \tilde{f}_{i,j,k}^{n}\xi_{ij}^{n+1}(\Delta v)^3 \Delta I = \sum_{j,k}\tilde{f}_{i,j,k}^{n}\tilde{\xi}_{ij}^{n}(\Delta v)^3 \Delta I= \tilde{\rho}_i^n \tilde{\Theta}_i^n,
\end{align}	
where $\tilde{\Theta}_i^n$ is defined by
\[
\tilde{\rho}_i^n\tilde{\Theta}_i^n = \sum_{j,k} \tilde{f}_{i,j,k}^{n}(v_j-\tilde{U}_i^{n}) \otimes (v_j-\tilde{U}_i^{n})(\Delta v)^3 \Delta I.
\]
On the other hand, the right hand side in \eqref{2.5} can be rewritten by
\begin{align}\label{app 2}
\begin{split}
	\sum_{j,k}\frac{A_{\nu,\theta} \Delta t}{\kappa}&\left(\mathcal{M}_{\nu,\theta}(f_{i,j,k}^{n+1})-f_{i,j,k}^{n+1}\right)\xi_{ij}^{n+1}
(\Delta v)^3 \Delta I \cr
&=\frac{A_{\nu,\theta} \Delta t}{\kappa}\left(\rho_i^{n+1}(\mathcal{T}_{\nu,\theta})_i^{n+1}-\rho_i^{n+1}\Theta_i^{n+1}\right)\cr
&=\frac{A_{\nu,\theta} \Delta t}{\kappa}\left(\rho_i^{n+1}\bigg[ \theta(T_\delta)_i^{n+1} Id + (1-\theta)\bigg\{(1-\nu)(T_{tr})_i^{n+1}Id + \nu \Theta_i^{n+1} \bigg\}\bigg]-\rho_i^{n+1}\Theta_i^{n+1}\right)\cr
&=\frac{A_{\nu,\theta} \Delta t}{\kappa}\bigg(\rho_i^{n+1}\bigg[ \theta(T_\delta)_i^{n+1}  + (1-\theta)(1-\nu)(T_{tr})_i^{n+1}\bigg]Id - \big\{1 -\nu + \nu\theta\big\}\rho_i^{n+1}\Theta_i^{n+1}\bigg)\cr
&=\frac{\Delta t}{\kappa}\rho_i^{n+1}\left[ A_{\nu,\theta}\theta(T_\delta)_i^{n+1} + (1- A_{\nu,\theta} \theta) (T_{tr})_i^{n+1} \right]Id  - \frac{\Delta t}{\kappa}\rho_i^{n+1}\Theta_i^{n+1}.
\end{split}
\end{align}
Then, we insert \eqref{app 1} and \eqref{app 2} into \eqref{2.5} to compute $\Theta_i^{n+1}$ as follows:
\begin{align*}
\Theta_i^{n+1} &= \frac{\Delta t \left[ A_{\nu,\theta}\theta(\tilde{T}_\delta)_i^n + (1- A_{\nu,\theta} \theta) (\tilde{T}_{tr})_i^n \right]Id  +\kappa \tilde{\Theta}_i^n}{\Delta t + \kappa}.
\end{align*}
Now, we use this and \eqref{TTT} to approximate the polyatomic stress tensor:
\begin{align*}
	(\mathcal{T}_{\nu,\theta})_i^{n+1}&=\theta (T_\delta)_i^{n+1} Id + (1-\theta)\bigg\{(1-\nu)(T_{tr})_i^{n+1}Id + \nu \Theta_i^{n+1} \bigg\}\cr
	&\approx \theta (\tilde{T}_\delta)_i^n Id + (1-\theta)(1-\nu)(\tilde{T}_{tr})_i^nId  \cr
	&\quad + (1-\theta)\nu \bigg[\frac{\Delta t \left[ A_{\nu,\theta}\theta(\tilde{T}_\delta)_i^n + (1- A_{\nu,\theta} \theta) (\tilde{T}_{tr})_i^n \right]Id  + \kappa\tilde{\Theta}_i^n}{\Delta t + \kappa}\bigg]\cr
	&= \bigg(\theta + \frac{(1-\theta)\nu \Delta tA_{\nu,\theta}\theta}{\Delta t + \kappa}\bigg) (\tilde{T}_\delta)_i^n Id\cr
	&\quad + \bigg((1-\theta)(1-\nu) + \frac{(1-\theta)\nu\Delta t(1- A_{\nu,\theta} \theta)}{\Delta t + \kappa}\bigg)(\tilde{T}_{tr})_i^nId  \cr
	&\quad + (1-\theta)\nu \frac{ \kappa\tilde{\Theta}_i^n}{\Delta t + \kappa}.
\end{align*}
We write it in a more compact manner
\begin{align}\label{B-5}
	(\mathcal{T}_{\nu,\theta})_i^{n+1}
	&\approx (\tilde{\mathcal{T}}_{\nu,\theta})_i^n :=\lambda\theta (\tilde{T}_\delta)_i^n Id + \lambda(1-\theta)(1-\nu)(\tilde{T}_{tr})_i^nId  +(1-\theta)\bar{\nu}  \tilde{\Theta}_i^n,
\end{align}
using
\begin{align*} 
	\lambda \equiv \lambda(\nu,\theta,\kappa,\Delta t):= \frac{\kappa + A_{\nu,\theta}\Delta t}{\Delta t + \kappa}, \quad \bar{\nu} \equiv \bar{\nu}(\nu,\kappa,\Delta t):= \frac{\kappa \nu }{\Delta t + \kappa}.
\end{align*}
Similarly, we approximate $(T_\theta)_i^{n+1}$ as follows:
\begin{align}\label{Ttheta}
	(T_\theta)_i^{n+1} \approx (\tilde{T}_\theta)_i^n:= \theta (\tilde{T}_\delta)_i^n + (1-\theta)(\tilde{T}_{I,\delta})_i^n.
\end{align}
In view of \eqref{rhouE}, \eqref{TTT}, \eqref{B-5}, \eqref{Ttheta}, we find that   $\mathcal{M}_{\nu,\theta}(f_{i,j,k}^{n+1})$ is legitimately replaced by $\mathcal{M}_{\nu,\theta}(\tilde{f}_{i,j,k}^{n})$: 
\begin{align*}
	\mathcal{M}_{\nu,\theta}(\tilde{f}_{i,j,k}^{n}):&= \frac{\tilde{\rho}_i^n\Lambda_\delta}{\sqrt{\det\left(2 \pi (\tilde{\mathcal{T}}_{\nu,\theta})_i^n \right)}\big((\tilde{T}_\theta)_i^n\big)^\frac{\delta}{2}} \exp \left({-\frac{(v_j-\tilde{U}_i^n)^{\top} ((\tilde{\mathcal{T}}_{\nu,\theta})_i^n )^{-1}(v_j-\tilde{U}_i^n)}{2} -\frac{I_k^{\frac{2}{\delta}}}{(\tilde{T}_\theta)_i^n}}\right).
\end{align*}

Finally, we substitue this into \eqref{B-4}, and solve for $f_{i,j,k}^{n+1}$ to get our scheme:
\begin{align}\label{B-6}
f_{i,j,k}^{n+1} =\frac{\kappa\tilde{f}_{i,j,k}^n +   A_{\nu,\theta} \Delta t \mathcal{M}_{\nu,\theta}(\tilde{f}_{i,j,k}^{n})}{\kappa + A_{\nu,\theta} \Delta t}.
\end{align}

\begin{remark}
	For $\theta=0$, after taking summation over $k$ in \eqref{B-6}, this scheme becomes the first order SL scheme for the monatomic ES-BGK model in \cite{RY}. For $\theta=\nu=0$, the scheme further reduces to the first order SL scheme for the BGK model in \cite{GRS,RS,SP}. In this paper, we only consider the case $0<\theta \leq 1$ and $-\frac{1}{2}< \nu < 1$.
\end{remark}

\end{section}

\section{Main result}
In this section, we present the explicit error estimate of our scheme measured in weighted $\|\cdot\|_{L_q^\infty}$-norm. We state a theorem for the existence of classical solutions in \cite{Park sa jun PHD thesis}, which is necessary for error estimates in following sections.
In the following theorem, we take a final time $T^f>0$.


\begin{theorem}\label{T.3.1} \cite{Park sa jun PHD thesis,PY}
Let $-1/2<\nu< 1$, $0<\theta \leq 1$, $\delta>0$, $q>5+\delta$.	Suppose that the initial function $f_0$ satisfies the following two conditions: 
		\begin{align}\label{T.3.1 cond}
			\begin{split}
			&(1)~\|f_0\|_{L_{2,q}^{\infty}} < \infty,\cr
			&(2)~f_0(x-vt,v,I) > C_0^1e^{-C_0^2(|v|^a +I^b)},\quad \text{for some constants $a, b,C_0^1, C_0^2 >0$}.
			\end{split}
		\end{align}
		Then, there exists a unique solution for (\ref{A-1}) 
		that satisfies
		\begin{itemize}
			\item ($\mathcal{A}1$): $f$ is uniformly bounded:
			\[
			\|f(t)\|_{L_{2,q}^{\infty}}\leq C_{2,1}e^{C_{2,2} t}\{ \|f_0\|_{L_{2,q}^\infty} + 1 \}
			\]
			for some positive constants $C_{2,1}$ and $C_{2,2}$.
			\item ($\mathcal{A}2$): There exist positive constants $C_{T^f,f_0}$, $C_{T^f,f_0,\delta}$ and $C_{T^f,f_0,\delta,q}$ such that
			\begin{align*}
				\rho(x,t)&\geq C_{T^f,f_0},\cr
				T_\delta(x,t)&\geq C_{T^f,f_0,\delta},\cr\rho(x,t) +|U(x,t)| + T_\delta(x,t)&\leq C_{T^f,f_0,\delta,q}.
			\end{align*}
		\end{itemize}
		
\end{theorem}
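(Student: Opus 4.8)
The plan is to treat \eqref{A-1} as a mild (Duhamel) equation along characteristics and run a fixed-point / continuity argument. First I would rewrite the equation in mild form: since the characteristics of the transport part are $x(s)=x-v(t-s)$, $v(s)=v$, $I(s)=I$, and the loss term contributes a damping factor, the solution satisfies
\begin{align*}
f(x,v,t,I) = e^{-\int_0^t \frac{A_{\nu,\theta}}{\kappa}\,ds'}f_0(x-vt,v,I) + \int_0^t e^{-\int_s^t \frac{A_{\nu,\theta}}{\kappa}\,ds'}\,\frac{A_{\nu,\theta}}{\kappa}\,\mathcal{M}_{\nu,\theta}(f)(x-v(t-s),v,s,I)\,ds,
\end{align*}
where $A_{\nu,\theta}$ depends on $T_\delta, T_{I,\delta}$ through $\theta$ only via the constant collision frequency (in this model $A_{\nu,\theta}$ is an actual constant once $\nu,\theta$ are fixed), so the exponential weights are genuinely exponential. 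The key structural fact I would exploit is that $\mathcal{M}_{\nu,\theta}(f)$ depends on $f$ only through its first moments $\rho, \rho U, E_{tr}, E_{I,\delta}$ (equivalently $\rho,U,\Theta, T_{I,\delta}$), so the nonlinearity is, in effect, a smooth function of finitely many velocity-energy moments of $f$.

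The main steps: (i) Set up the solution space $X_{T}$ of functions bounded in $\|\cdot\|_{L^\infty_{2,q}}$ on $[0,T]$ that in addition satisfy a quantitative lower bound $f(x-vt,v,I)\geq \tfrac12 C_0^1 e^{-2C_0^2(|v|^a+I^b)}$ and a priori bounds $\rho\geq c_0>0$, $c_1\leq T_\delta\leq C_1$, $|U|\leq C_1$ on a possibly short time interval. (ii) Show the map $\Phi$ sending $g\mapsto$ (right-hand side of the Duhamel formula with $\mathcal{M}_{\nu,\theta}(g)$ inside) preserves $X_T$ for $T$ small: the upper bound in $L^\infty_{2,q}$ follows because a Gaussian $\mathcal{M}_{\nu,\theta}$ with controlled moments is itself bounded in $L^\infty_{2,q}$ (this uses the lower bound on $\rho$ and $\det \mathcal{T}_{\nu,\theta}$, which in turn uses $-1/2<\nu<1$, $0<\theta\le 1$ to guarantee $\mathcal{T}_{\nu,\theta}$ is uniformly positive definite — a convex-combination argument showing $\mathcal{T}_{\nu,\theta}\geq \theta T_\delta\, Id$); the lower bound propagates because the loss term only multiplies by a decaying exponential and the gain term is nonnegative, giving $f(x-vt,v,I)\geq e^{-(A_{\nu,\theta}/\kappa)T}C_0^1 e^{-C_0^2(|v|^a+I^b)}$, which stays above $\tfrac12 C_0^1 e^{-2C_0^2(\cdots)}$ for $T$ small. (iii) Show $\Phi$ is a contraction in $L^\infty_{2,q}$ (or a weaker norm, then upgrade): this reduces to Lipschitz continuity of $f\mapsto \mathcal{M}_{\nu,\theta}(f)$ in terms of the moments, for which one needs that the moments depend Lipschitz-continuously on $f$ in the weighted norm (true since $q>5+\delta$ makes the relevant moment integrals converge with room to spare) and that $\mathcal{M}_{\nu,\theta}$ depends smoothly on $(\rho,U,\Theta,T_{I,\delta})$ on the compact set cut out by the a priori bounds. (iv) Iterate to extend to $[0,T^f]$: the bounds $(\mathcal{A}1)$–$(\mathcal{A}2)$ are not just short-time, so I would close a Grönwall-type estimate for $\|f(t)\|_{L^\infty_{2,q}}$ directly from the Duhamel formula ($\|f(t)\|\leq \|f_0\|+C\int_0^t \|f(s)\|\,ds$ type, using the explicit bound on $\|\mathcal{M}_{\nu,\theta}(f)\|_{L^\infty_{2,q}}$ in terms of $\|f\|_{L^\infty_{2,q}}$), yielding $(\mathcal{A}1)$ with exponential-in-time constants; then $(\mathcal{A}2)$ follows because, with $f$ now globally bounded above in $L^\infty_{2,q}$, the lower bound on $f$ gives a lower bound on $\rho$ and on $\rho T_{tr}$, $\rho T_{I,\delta}$ (integrate the Gaussian-type lower bound of $f$ against $1$, $|v-U|^2$, $I^{2/\delta}$), while the upper bounds on $\rho, |U|, T_\delta$ come from the $L^\infty_{2,q}$ bound and $q>5+\delta$.

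The hard part will be the uniform lower bounds in $(\mathcal{A}2)$ — specifically propagating the pointwise lower bound on $f$ for all time and converting it into strictly positive lower bounds on $\rho$ and $T_\delta$ that do not collapse. Getting the exponential weight $e^{-C_0^2(|v|^a+I^b)}$ to survive under the Duhamel iteration requires that the gain term $\mathcal{M}_{\nu,\theta}(f)$, which is Gaussian in $v$ and $e^{-I^{2/\delta}/T_\theta}$ in $I$, itself dominates a multiple of $e^{-C_0^2(|v|^a+I^b)}$ uniformly — this forces a compatibility condition linking $a,b$ to $2$ and $2/\delta$ (one needs $a\ge 2$, $b\ge 2/\delta$, or else to choose the comparison exponent appropriately), and it is here that the uniform positive-definiteness of $\mathcal{T}_{\nu,\theta}$ and the uniform lower bound on $T_\theta = \theta T_\delta+(1-\theta)T_{I,\delta}$ are essential. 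I would handle this by a bootstrap: first establish $(\mathcal{A}1)$ and the upper bounds of $(\mathcal{A}2)$ globally, then use them to get a uniform \emph{upper} bound on $\mathcal{T}_{\nu,\theta}$ and $T_\theta$, which together with a temporary lower bound on $\rho,T_\delta$ (valid on a short interval and extended by a continuity/connectedness argument) gives a uniform lower bound on the Gaussian, and finally close the loop. Everything else — the contraction estimate, the moment bounds, the smoothness of $\mathcal{M}_{\nu,\theta}$ in its parameters — is routine given $q>5+\delta$. This is exactly the structure carried out for the monatomic ES-BGK model in \cite{RY} and for the original BGK model in \cite{RSY}; the new wrinkle is bookkeeping the extra $I$-variable and the relaxation temperature $T_\theta$, and I would cite \cite{Park sa jun PHD thesis,PY} for the detailed verification.
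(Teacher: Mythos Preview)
The paper does not prove Theorem~\ref{T.3.1} at all: it is quoted from \cite{Park sa jun PHD thesis,PY} and used as a black box supplying the regularity and moment bounds $(\mathcal{A}1)$--$(\mathcal{A}2)$ needed for the numerical error analysis. So there is no ``paper's own proof'' to compare against.

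That said, your outline is essentially the approach carried out in the cited works: a Duhamel/fixed-point argument in a weighted $L^\infty$ space, exploiting that $\mathcal{M}_{\nu,\theta}$ depends on $f$ only through finitely many moments, with the positive-definiteness estimate $\mathcal{T}_{\nu,\theta}\geq \theta T_\delta\,Id$ (cf.\ Proposition~\ref{P.6.2}) doing the work of keeping the Gaussian nondegenerate. Your identification of the hard step --- propagating the pointwise lower bound so that $\rho$ and $T_\delta$ stay strictly positive for all time --- is accurate, and the compatibility issue you flag between the exponents $a,b$ in the assumed lower bound and the Gaussian/$I^{2/\delta}$ decay of $\mathcal{M}_{\nu,\theta}$ is real. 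One small correction: in step~(ii) you do not actually need the gain term to dominate the same exponential profile to get the lower bound on $f$ --- it suffices that $\mathcal{M}_{\nu,\theta}\geq 0$, since then the mild formula gives $f(x,v,t,I)\geq e^{-A_{\nu,\theta}t/\kappa}f_0(x-vt,v,I)$ directly, which is exactly the mechanism the paper exploits at the discrete level (Lemma~\ref{L.5.13}). The sharper lower bound on the gain term is only needed if you want a lower bound independent of $\kappa$, which Theorem~\ref{T.3.1} does not claim.
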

\begin{remark}
Existence of classical solutions and its asymptotic equilibrization in near-equilibrium regime can be found in \cite{Y1}.
\end{remark}

Now, we state our main theorem.
\begin{theorem}\label{T.3.2}
	Let $-1/2<\nu< 1$, $0<\theta \leq 1$, \textcolor{magenta}{$0<\delta\leq 2$} and $q>5+\delta$. Let $f$ be the unique smooth solution of \eqref{A-1} corresponding
	to the initial data $f_0$ satisfying two initial conditions in Theorem \ref{T.3.1} and $\| f_0\|_{L_{1,q+1}^\infty}<\infty$.
	
	For a positive $r_{\Delta v, \Delta I}>0$ given in Theorem \ref{T.5.5}, assume that $\Delta v$ and $\Delta I$ satisfy
	$$\Delta v,\Delta I < r_{\Delta v, \Delta I}.$$ 
	Then, the discrete solution $f^{n}_{i,j,k}$ constructed from (\ref{main scheme}) satisfies
	the following explicit error estimate:
\begin{align*}
\|f^{N_t} - f(T^f)\|_{L_q^\infty} &\leq  C \bigg( \frac{(\Delta x)^2}{\Delta t} + (\Delta x)^2 + \Delta v + \Delta I + \Delta t\bigg)
\end{align*}	 
where $C$ is a constant depending on $T^f,q,\delta,\kappa,\theta,\nu,\Delta t$, but can be uniformly bounded regardless of $\Delta t>0$. 
\end{theorem}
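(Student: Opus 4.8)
\medskip

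The plan is to carry out a Lady-Windermere's-fan type argument comparing the discrete evolution with the exact one, step by step in time, and to control the accumulation of local errors using a discrete Gr\"onwall inequality. First, I would rewrite the continuous equation \eqref{A-1} along the backward characteristics $x(s)=x_i-v_j^1(t^{n+1}-s)$ in an integral (Duhamel) form so that the exact solution at $(x_i,v_j,t^{n+1},I_k)$ is expressed as $\frac{\kappa}{\kappa+A_{\nu,\theta}\Delta t}f(x_i-v_j^1\Delta t,v_j,t^n,I_k)+\frac{A_{\nu,\theta}\Delta t}{\kappa+A_{\nu,\theta}\Delta t}\mathcal M_{\nu,\theta}(f)(\cdots)$ plus a \emph{consistency error} of size $O(\Delta t^2)$ coming from the implicit Euler quadrature of the relaxation term and the time-variation of $\mathcal M_{\nu,\theta}(f)$ along the characteristic. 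This is where hypothesis ($\mathcal A1$)--($\mathcal A2$) from Theorem \ref{T.3.1} is used: the smoothness and the uniform positive lower bounds on $\rho$ and $T_\delta$ make $\partial_t \mathcal M_{\nu,\theta}(f)$ and the second time-derivative bounded in the weighted norm, which is exactly the content promised by Section 6 ("rewrite the polyatomic ES-BGK model for easy comparison") in the paper's outline.

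\medskip

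Second, I would split the global error $e^{n+1}_{i,j,k}:=f^{n+1}_{i,j,k}-f(x_i,v_j,t^{n+1},I_k)$, weighted by $(1+|v_j|^2+I_k^{2/\delta})^{q/2}$, into three contributions: (i) the \emph{interpolation error} $\tilde f^n_{i,j,k}-f(x_i-v_j^1\Delta t,v_j,t^n,I_k)$, which by the standard linear-interpolation estimate is bounded by $\|e^n\|_{L_q^\infty}$ plus $C\,(\Delta x)^2\,\|\partial_x^2 f(t^n)\|$ — this produces the $(\Delta x)^2/\Delta t$ term after the Gr\"onwall summation multiplies the per-step $(\Delta x)^2$ by $N_t=T^f/\Delta t$; (ii) the \emph{Gaussian error} $\mathcal M_{\nu,\theta}(\tilde f^n_{i,j,k})-\mathcal M_{\nu,\theta}(f)(x_i-v_j^1\Delta t,v_j,t^n,I_k)$, which must be estimated by the Lipschitz-type bound of Section 7 in terms of the errors in the discrete macroscopic moments $\tilde\rho,\tilde U,\tilde\Theta,\tilde T_\delta$; and (iii) the consistency error $O(\Delta t^2)$ from step one. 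The moment errors in (ii) themselves decompose into the \emph{quadrature error} of replacing $\int\cdots dv\,dI$ by the Riemann sum $\sum_{j,k}\cdots(\Delta v)^3\Delta I$ — this yields the $\Delta v+\Delta I$ terms, using the weighted decay of $f$ (the hypothesis $\|f_0\|_{L_{1,q+1}^\infty}<\infty$ and $q>5+\delta$ guarantee enough moments and enough decay that the tails of the sums are controlled and the truncation in $v,I$ is harmless) — plus a term controlled by $\|e^n\|_{L_q^\infty}$ itself.

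\medskip

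The crucial input that makes step (ii) legitimate is the uniform stability estimate of Section \ref{stability sec} together with Lemma \ref{L.5.14}: the discrete polyatomic temperature $(\tilde T_\delta)_i^n$ stays bounded below by a strictly positive constant (independent of $\Delta t$), so the discrete temperature tensor $(\tilde{\mathcal T}_{\nu,\theta})_i^n$ stays uniformly positive definite and the discrete Gaussian never degenerates; this makes the map (moments)$\,\mapsto\mathcal M_{\nu,\theta}$ uniformly Lipschitz on the relevant range, with constants depending only on $T^f,\delta,\kappa,\theta,\nu$ and the bounds from Theorem \ref{T.3.1}, not on $\Delta t$. Assembling the three pieces gives a recursion of the form
\begin{align*}
\|e^{n+1}\|_{L_q^\infty}\leq (1+C\Delta t)\,\|e^n\|_{L_q^\infty}+C\Delta t\Big((\Delta x)^2+\frac{(\Delta x)^2}{\Delta t}\cdot\Delta t^{-1}\!\!\cdot\!\Delta t + \Delta v+\Delta I+\Delta t^2\Big),
\end{align*}
and I would be careful here: the $(\Delta x)^2$ local interpolation term is \emph{not} multiplied by $\Delta t$, so summing $n=0,\dots,N_t-1$ with the discrete Gr\"onwall lemma turns it into $N_t(\Delta x)^2=T^f(\Delta x)^2/\Delta t$, while the genuinely $O(\Delta t)$-weighted terms sum to $O(1)\cdot(\Delta v+\Delta I+\Delta t)$; the factor $e^{CT^f}$ from Gr\"onwall is absorbed into $C$. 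Since the initial step is exact by \eqref{no initial err}, $\|e^0\|=0$ and the stated estimate follows. The main obstacle, and the technical heart of the argument, is step (ii): proving that the Gaussian-difference is genuinely Lipschitz in the discrete moments \emph{uniformly in $\Delta t$}, which hinges entirely on the non-degeneracy of $(\tilde T_\delta)_i^n$ — and that in turn is why the uniform \emph{lower} bound in the stability estimate, unusual in numerical analysis, is indispensable; the restriction $0<\delta\leq 2$ flagged in the theorem statement presumably enters precisely in making the weight $I_k^{2/\delta}$ and the internal-energy quadrature/decay estimates work cleanly in this Lipschitz bound.
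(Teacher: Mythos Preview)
Your proposal is correct and follows essentially the same route as the paper: the continuous solution is rewritten in a ``consistent form'' matching the scheme (Lemma~\ref{L.6.1}), the local error is split into interpolation (Lemma~\ref{L.7.1}), Gaussian-difference (Proposition~\ref{P.7.1}, relying on the stability/lower-bound estimates of Section~\ref{stability sec} exactly as you describe), and a consistency remainder $R_1+R_2=O(\Delta t^2)$ (Lemma~\ref{L.6.2}), and then a discrete Gr\"onwall argument closes the recursion. Your displayed recursion is garbled, but your verbal correction right after it---that the $(\Delta x)^2$ interpolation contribution is \emph{not} weighted by $\Delta t$, hence sums to $(\Delta x)^2/\Delta t$---is exactly the paper's mechanism, and your guess that the restriction $0<\delta\le 2$ enters via the mean-value estimate for $|I_k^{2/\delta}-I^{2/\delta}|$ in the quadrature error (Lemma~\ref{L.7.2}) is on the mark.
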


\begin{remark}
	(1) The value of $r_{\Delta v, \Delta I}$ is given in Theorem \ref{T.5.5}.
	(2) The constant $C$ in the error bound blows up as $\kappa \rightarrow 0$.
\end{remark}

\section{Technical lemmas}
In this section, we present several technical lemmas.
\begin{lemma}\label{L.4.1}
	The discrete
	solution $f^n$ and $\tilde{f}^n$ satisfies
	\begin{align*}
		\|\tilde{f}^0\|_{L_q^\infty}&\leq \|f_0\|_{L_q^\infty}, \quad \text{for } n= 0,\cr		
		\|\tilde{f}^n\|_{L_q^\infty}&\leq \|f^n\|_{L_q^\infty}, \quad \text{for } n\geq 1.
	\end{align*}	
\end{lemma}
\begin{proof}
	For $n=0$, we recall that no initial errors are assumed. Then, 
	\begin{align*}
		\|\tilde{f}^0\|_{L_q^\infty}&=\sup_{i,j,k}\left|\tilde{f}_{i,j,k}^0\left(1+|v_j|^2+I_k^{\frac{2}{\delta}}\right)^{\frac{q}{2}}\right|\cr
		&=\sup_{i,j,k}\left|f_0(x_i-v_j^1\Delta t, v_j,I_k)\left(1+|v_j|^2+I_k^{\frac{2}{\delta}}\right)^{\frac{q}{2}}\right|\cr
		&\leq\sup_{x,v,I}\left|f_0(x-v^1\Delta t, v,I)\left(1+|v|^2+I^{\frac{2}{\delta}}\right)^{\frac{q}{2}}\right|\cr
		&= \|f_0\|_{L_q^\infty}.
	\end{align*}
	For $n \geq 1$, 
	we use \eqref{B-2} to obtain
	\begin{align*}
		\|\tilde{f}^n\|_{L_q^\infty}&=\sup_{i,j,k}\left|\tilde{f}_{i,j,k}^n\left(1+|v_j|^2+I_k^{\frac{2}{\delta}}\right)^{\frac{q}{2}}\right|\cr
		&= \sup_{i,j,k}\left|\big(a_jf_{s,j,k}^n + (1-a_j) f_{s+1,j,k}^n\big)\left(1+|v_j|^2+I_k^{\frac{2}{\delta}}\right)^{\frac{q}{2}}\right|\cr
		&\leq \sup_{i,j,k}\left|f_{i,j,k}^n \left(1+|v_j|^2+I^{\frac{2}{\delta}}\right)^{\frac{q}{2}}\right|,
	\end{align*}
	where the index $s$ is determined as in \eqref{B-2} for each $i,\,j$, and the last inequality follows from the inequalities $0<a_j\leq 1$. This completes the proof.	
\end{proof}

In the following lemma, we establish the equivalent relations for $(\tilde{\mathcal{T}}_{\nu,\theta})_i^n$ and
$(\tilde{T}_{\theta})_i^n$. 

\begin{lemma}\label{L.4.2}
	Let $\delta>0$, $-1/2<\nu<1$ and $0<\theta\leq 1$. Suppose  $\tilde{f}_{i,j,k}^n>0$ and $\tilde{\rho}_i^n>0$. Then, the discrete temperature tensor $(\tilde{\mathcal{T}}_{\nu,\theta})_i^n$ and
	relaxation temperature $(\tilde{T}_{\theta})_i^n$ satisfy the following  estimates:
	\begin{align*} 
	\begin{split}
		&(1)\ \lambda\theta (\tilde{T}_{\delta})_i^n Id \leq (\tilde{\mathcal{T}}_{\nu,\theta})_i^n \leq \frac{1}{3}\lambda C_{\nu}\big\{3+\delta(1-\theta)\big\} (\tilde{T}_{\delta})_i^nId, \cr
		&(2)\ \theta (\tilde{T}_{\delta})_i^n \leq (\tilde{T}_{\theta})_i^n \leq \frac{1}{\delta}\big\{\delta+3(1-\theta)\big\} (\tilde{T}_{\delta})_i^n,
	\end{split}
	\end{align*}
	where $C_{\nu}=\max\{1-\nu,1+2\nu\}$ and $\displaystyle \lambda \equiv \frac{\kappa + A_{\nu,\theta}\Delta t}{\Delta t + \kappa}$.
\end{lemma}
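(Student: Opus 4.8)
The plan is to analyze the three contributions in the definition
$(\tilde{\mathcal{T}}_{\nu,\theta})_i^n=\lambda\theta (\tilde{T}_\delta)_i^n Id + \lambda(1-\theta)(1-\nu)(\tilde{T}_{tr})_i^nId  +(1-\theta)\bar{\nu}  \tilde{\Theta}_i^n$ separately, using the positivity hypotheses $\tilde{f}_{i,j,k}^n>0$ and $\tilde{\rho}_i^n>0$ to control the sign of each piece. First, I would record the elementary facts that follow directly from the summation formulas in \eqref{tilde TTT}: the discrete stress tensor $\tilde{\Theta}_i^n$ is symmetric positive semi-definite (it is a positively-weighted sum of rank-one matrices $(v_j-\tilde U_i^n)\otimes(v_j-\tilde U_i^n)$), its trace equals $3(\tilde{T}_{tr})_i^n$, so in the matrix ordering $0\leq \tilde{\Theta}_i^n\leq 3(\tilde{T}_{tr})_i^n Id$; likewise $(\tilde{T}_{tr})_i^n\geq 0$ and $(\tilde{T}_{I,\delta})_i^n\geq 0$, hence by the convex-combination identity $(\tilde{T}_\delta)_i^n=\frac{3}{3+\delta}(\tilde{T}_{tr})_i^n+\frac{\delta}{3+\delta}(\tilde{T}_{I,\delta})_i^n$ one gets both $(\tilde{T}_{tr})_i^n\leq \frac{3+\delta}{3}(\tilde{T}_\delta)_i^n$ and $(\tilde{T}_{I,\delta})_i^n\leq \frac{3+\delta}{\delta}(\tilde{T}_\delta)_i^n$. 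I would also note $\lambda>0$ and that $\bar\nu=\frac{\kappa\nu}{\Delta t+\kappa}$ has the sign of $\nu$, so $\bar\nu\tilde\Theta_i^n$ is positive semi-definite when $\nu\geq0$ but only bounded below by $3\bar\nu(\tilde T_{tr})_i^n Id$ (a negative multiple) when $\nu<0$.

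For part (1), the lower bound is immediate: dropping the (possibly) nonnegative terms $\lambda(1-\theta)(1-\nu)(\tilde T_{tr})_i^n Id$ (here $1-\nu>0$ since $\nu<1$) and handling the $\bar\nu\tilde\Theta_i^n$ term by its worst case, one checks that the sum of the $(\tilde T_{tr})_i^n$-coefficients, $\lambda(1-\theta)(1-\nu)+3\bar\nu(1-\theta)$, is nonnegative; using $\lambda=\frac{\kappa+A_{\nu,\theta}\Delta t}{\Delta t+\kappa}$ and $\bar\nu=\frac{\kappa\nu}{\Delta t+\kappa}$ this coefficient simplifies to $(1-\theta)\frac{\kappa(1-\nu+3\nu)+A_{\nu,\theta}\Delta t(1-\nu)}{\Delta t+\kappa}=(1-\theta)\frac{\kappa(1+2\nu)+A_{\nu,\theta}\Delta t(1-\nu)}{\Delta t+\kappa}\geq0$ because $1+2\nu>0$ for $\nu>-1/2$ and $1-\nu>0$. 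Hence everything beyond $\lambda\theta(\tilde T_\delta)_i^n Id$ is positive semi-definite and the lower bound follows. For the upper bound I would estimate $\tilde\Theta_i^n\leq 3(\tilde T_{tr})_i^n Id$ (valid when $\bar\nu\geq0$; when $\bar\nu<0$ use $\tilde\Theta_i^n\geq0$ so that term is $\leq0$), collect the $(\tilde T_{tr})_i^n$-coefficient, bound $(\tilde T_{tr})_i^n\leq\frac{3+\delta}{3}(\tilde T_\delta)_i^n$, and combine with the $\lambda\theta(\tilde T_\delta)_i^n$ term; the constant $C_\nu=\max\{1-\nu,1+2\nu\}$ enters precisely because the $(\tilde T_{tr})_i^n$-coefficient is $\lambda(1-\theta)(1-\nu)+3(1-\theta)\bar\nu$ in the $\nu\geq0$ case and $\lambda(1-\theta)(1-\nu)$ in the $\nu<0$ case — both are bounded by $\lambda(1-\theta)C_\nu$ after the same simplification as above — leading to the stated bound $\frac{1}{3}\lambda C_\nu\{3+\delta(1-\theta)\}(\tilde T_\delta)_i^n Id$ once $\lambda\theta\leq\lambda C_\nu$ is used for the first term.

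Part (2) is the scalar analogue and is easier: from $(\tilde T_\theta)_i^n=\theta(\tilde T_\delta)_i^n+(1-\theta)(\tilde T_{I,\delta})_i^n$ with $(\tilde T_{I,\delta})_i^n\geq0$ one gets the lower bound $\theta(\tilde T_\delta)_i^n$ at once, and with $(\tilde T_{I,\delta})_i^n\leq\frac{3+\delta}{\delta}(\tilde T_\delta)_i^n$ one gets $(\tilde T_\theta)_i^n\leq\big(\theta+(1-\theta)\frac{3+\delta}{\delta}\big)(\tilde T_\delta)_i^n=\frac{1}{\delta}\{\delta+3(1-\theta)\}(\tilde T_\delta)_i^n$. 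The only mild subtlety throughout is bookkeeping the matrix inequalities in the sense of the ordering defined in Section \ref{notation} (testing against arbitrary $k\in\mathbb R^3$), and splitting into the cases $\nu\geq0$ and $-1/2<\nu<0$ so that the sign of $\bar\nu$ is tracked correctly; I expect this case split, together with verifying that the combined $(\tilde T_{tr})_i^n$-coefficient stays nonnegative and is dominated by $\lambda(1-\theta)C_\nu$, to be the main (though still routine) obstacle.
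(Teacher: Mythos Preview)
Your proposal is correct and follows essentially the same approach as the paper's proof. The paper tests the temperature tensor against arbitrary $k\in\mathbb{R}^3$ and uses the Cauchy--Schwarz inequality $(k\cdot(v_j-\tilde U_i^n))^2\leq |k|^2|v_j-\tilde U_i^n|^2$ to obtain $k^\top\tilde\Theta_i^n k\leq 3(\tilde T_{tr})_i^n|k|^2$, whereas you phrase this equivalently as the matrix-order inequality $0\leq\tilde\Theta_i^n\leq 3(\tilde T_{tr})_i^n Id$ coming from the trace; both proofs then split into the cases $\nu\geq 0$ and $-1/2<\nu<0$, use the convex-combination identity for $(\tilde T_\delta)_i^n$ to bound $(\tilde T_{tr})_i^n$ and $(\tilde T_{I,\delta})_i^n$, and collect the resulting coefficients with $C_\nu\geq 1$ in the same way.
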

\begin{proof}
	\noindent {\bf (1) The estimate for $(\tilde{\mathcal{T}}_{\nu,\theta})_i^n$}: For $k \in \mathbb{R}^3$, recall the definition of $(\tilde{\mathcal{T}}_{\nu,\theta})_i^n$ in \eqref{B-5} to have
	\begin{align}\label{D-1}
		\begin{split}
			k^{\top}\big\{\tilde{\rho}_i^n (\tilde{\mathcal{T}}_{\nu,\theta})_i^n \big\}k
			&=k^{\top}\bigg\{\lambda\theta  \bigg[\frac{2}{3+\delta}\sum_{j,k} \tilde{f}_{i,j,k}^{n} \left(\frac{|v_j-\tilde{U}_i^n|^2}{2} + I^{\frac{2}{\delta}}\right) (\Delta v)^3 \Delta I\bigg]Id\bigg\}k\cr
			&\quad +k^{\top}\bigg\{\lambda(1-\theta)(1-\nu) \bigg[\frac{2}{3}\sum_{j,k} \tilde{f}_{i,j,k}^{n} \frac{|v_j-\tilde{U}_i^n|^2}{2}(\Delta v)^3 \Delta I\bigg]Id\bigg\}k\cr
			&\quad +k^{\top}\bigg\{(1-\theta)\bar{\nu}  \sum_{j,k} \tilde{f}_{i,j,k}^{n}(v_j-\tilde{U}_i^{n}) \otimes (v_j-\tilde{U}_i^{n})(\Delta v)^3 \Delta I\bigg\}k\cr
			&\equiv R_1 + R_2 + R_3.
		\end{split}
	\end{align}
	 Depending on the range of $\nu$, we respectively estimate the upper and lower bounds of $k^{\top}\big\{\tilde{\rho}_i^n (\tilde{\mathcal{T}}_{\nu,\theta})_i^n \big\}k$ in \eqref{D-1} as follows:  
	
	\noindent {\bf (1-1) Upper bound estimate of (\ref{D-1}):}
	
	\noindent {\bf (1-1-1) $0 < \nu <1$}: 
	We first simplify $R_3$ by using the following identity:
	\begin{align*} 
	k^{\top}(v_j-\tilde{U}_i^{n}) \otimes (v_j-\tilde{U}_i^{n})k= \big(k\cdot (v_j-\tilde{U}_i^{n})\big)^2,
	\end{align*}
	and use the Cauchy-Schwartz inequality as follows:
		\begin{align*}
			\sum_{j,k} \tilde{f}_{i,j,k}^{n} \big(k\cdot (v_j-\tilde{U}_i^{n})\big)^2 (\Delta v)^3 \Delta I &\leq \sum_{j,k} \tilde{f}_{i,j,k}^{n}|v_j-\tilde{U}_i^{n}|^2 (\Delta v)^3 \Delta I |k|^2 \leq 3\tilde{\rho}_i^n (\tilde{T}_{tr})_i^n|k|^2.
		\end{align*}
		Then, the upper bound of \eqref{D-1} is given by
		\begin{align}\label{1-a}
			\begin{split}
			k^{\top}\{\tilde{\rho}_i^n (\tilde{\mathcal{T}}_{\nu,\theta})_i^n\}k
			&\leq \lambda\theta\tilde{\rho}_i^n (\tilde{T}_{\delta})_i^n|k|^2 +\lambda(1-\theta)(1-\nu)\tilde{\rho}_i^n (\tilde{T}_{tr})_i^n|k|^2\cr
			&\quad + 3(1-\theta)\bar{\nu} \tilde{\rho}_i^n (\tilde{T}_{tr})_i^n|k|^2 \cr
			&\leq \lambda (1+2\nu)\tilde{\rho}_i^n\left\{\theta (\tilde{T}_{\delta})_i^n+ (1-\theta) (\tilde{T}_{tr})_i^n\right\}|k|^2.
			\end{split}
		\end{align}
		In the last line, we use $\displaystyle 0< \bar{\nu}=\frac{\kappa \nu }{\Delta t + \kappa} \leq \nu$ and $\lambda>1$.
		
	\noindent {\bf (1-1-2) $-1/2 < \nu \leq 0$}:
		In this case, we have $\bar{\nu}\leq 0$. Then, \eqref{D-1} becomes
		\begin{align}\label{1-b}
			\begin{split}
			k^{\top}\{\tilde{\rho}_i^n (\tilde{\mathcal{T}}_{\nu,\theta})_i^n\}k
			&\leq \lambda \left\{\theta\tilde{\rho}_i^n (\tilde{T}_{\delta})_i^n|k|^2+(1-\theta)(1-\nu)\tilde{\rho}_i^n (\tilde{T}_{tr})_i^n|k|^2\right\} \cr
			&\leq \lambda (1-\nu)\tilde{\rho}_i^n\left\{\theta (\tilde{T}_{\delta})_i^n+ (1-\theta) (\tilde{T}_{tr})_i^n\right\}|k|^2.
			\end{split}
		\end{align}
	Combine \eqref{1-a} and \eqref{1-b} and divide both sides of \eqref{D-1} by $\tilde{\rho}_i^n>0$ to derive
	\begin{align}\label{D-3}
		k^{\top} (\tilde{\mathcal{T}}_{\nu,\theta})_i^n k
		\leq  \max\{1-\nu,1+2\nu\} \lambda \left\{(1-\theta)(\tilde{T}_{tr})_i^n+\theta (\tilde{T}_{\delta})_i^n\right\}|k|^2.
	\end{align}
	Now, we recall the definition of $(\tilde{T}_{\delta})_i^n$ in \eqref{tilde TTT} to obtain
	\begin{equation*}
	(\tilde{T}_{\delta})_i^n= \frac{3}{3+\delta}(\tilde{T}_{tr})_i^n+\frac{\delta}{3+\delta}(\tilde{T}_{I,\delta})_i^n\geq \frac{3}{3+\delta}(\tilde{T}_{tr})_i^n,
	\end{equation*}
	which, together with (\ref{D-3}), leads to
	\[
	k^{\top} (\tilde{\mathcal{T}}_{\nu,\theta})_i^n k \leq  \frac{1}{3}\max\{1-\nu,1+2\nu\} \lambda \big\{3+\delta(1-\theta)\big\}(\tilde{T}_{\delta})_i^n|k|^2.
	\]
	
	\noindent {\bf (1-2) Lower bound estimate of (\ref{D-1}):}
		
	\noindent {\bf (1-2-1) $0<\nu<1$}: The summation $R_2 + R_3$ in (\ref{D-1}) satisfies
		\begin{align*}
			R_2 + R_3&=k^{\top}\bigg\{\lambda(1-\theta)(1-\nu) \bigg[\frac{2}{3}\sum_{j,k} \tilde{f}_{i,j,k}^{n} \frac{|v_j-\tilde{U}_i^n|^2}{2}(\Delta v)^3 \Delta I\bigg]Id\bigg\}k\cr
			&\quad +(1-\theta)\bar{\nu}  \sum_{j,k} \tilde{f}_{i,j,k}^{n} \big(k\cdot(v_j-\tilde{U}_i^{n})\big)^2 (\Delta v)^3 \Delta I\cr
			&\geq k^{\top}\bigg\{\lambda(1-\theta)(1-\nu) \tilde{\rho}_i^n (\tilde{T}_{tr})_i^nId\bigg\}k\cr
			&\geq \frac{\kappa}{\Delta t + \kappa} (1-\theta)(1-\nu) k^{\top}\bigg\{ \tilde{\rho}_i^n (\tilde{T}_{tr})_i^nId\bigg\}k.
		\end{align*}
		In the last line, we use $\displaystyle \lambda = \frac{\kappa + A_{\nu,\theta}\Delta t}{\Delta t + \kappa} \geq \frac{\kappa}{\Delta t + \kappa}$ with $A_{\nu,\theta} = 1/(1-\nu + \nu\theta)  >0$.
		
	\noindent {\bf (1-2-2) $-1/2<\nu\leq 0$}: In this range of $\nu$, we have $\lambda>0$. Then, 
		\begin{align*}
			R_2 + R_3&=k^{\top}\bigg\{\bigg(\lambda(1-\theta)(1-\nu)\bigg) \bigg[\frac{2}{3}\sum_{j,k} \tilde{f}_{i,j,k}^{n} \frac{|v_j-\tilde{U}_i^n|^2}{2}(\Delta v)^3 \Delta I\bigg]Id\bigg\}k\cr
			&\quad +k^{\top}\bigg\{(1-\theta)\bar{\nu}  \sum_{j,k} \tilde{f}_{i,j,k}^{n}(v_j-\tilde{U}_i^{n}) \cdot (v_j-\tilde{U}_i^{n})(\Delta v)^3 \Delta I\bigg\}k\cr
			&\geq k^{\top}\bigg\{\bigg(\frac{\kappa}{\Delta t + \kappa}(1-\theta)(1-\nu)\bigg) \tilde{\rho}_i^n (\tilde{T}_{tr})_i^nId\bigg\}k\cr
			&\quad +k^{\top}\bigg\{\frac{3\kappa}{\Delta t + \kappa}(1-\theta)\nu  \tilde{\rho}_i^n (\tilde{T}_{tr})_i^n Id \bigg\}k\cr
			&= \frac{\kappa}{\Delta t + \kappa}(1-\theta)(1+2\nu) \tilde{\rho}_i^n (\tilde{T}_{tr})_i^n |k|^2.
		\end{align*}
		Since $R_2 + R_3 \geq 0$ for $-1/2 < \nu < 1$, we can conclude that
		\begin{align*}
			k^{\top}\{\tilde{\rho}_i^n (\tilde{\mathcal{T}}_{\nu,\theta})_i^n\}k
			\geq \lambda \theta \tilde{\rho}_i^n (\tilde{T}_{\delta})_i^n|k|^2  + R_2 + R_3 \geq \lambda \theta \tilde{\rho}_i^n (\tilde{T}_{\delta})_i^n|k|^2.
		\end{align*}

	 \noindent {\bf (2) The estimate for $(\tilde{T}_{\theta})_i^n$}: Note that $(\tilde{T}_{tr})_i^n \geq 0$, which gives
	\begin{equation*}
		(\tilde{T}_{\delta})_i^n= \frac{3}{3+\delta}(\tilde{T}_{tr})_i^n+\frac{\delta}{3+\delta}(\tilde{T}_{I,\delta})_i^n\geq \frac{\delta}{3+\delta}(\tilde{T}_{I,\delta})_i^n.
	\end{equation*}
	Then,
	\begin{align*}
		(\tilde{T}_{\theta})_i^n&= (1-\theta)(\tilde{T}_{I,\delta})_i^n+\theta (\tilde{T}_\delta)_i^n\cr
		&\leq (1-\theta)\bigg(\frac{3+\delta}{\delta}(\tilde{T}_{\delta})_i^n\bigg)+\theta (\tilde{T}_\delta)_i^n=\frac{1}{\delta}\left\{\delta+3(1-\theta)\right\} (\tilde{T}_{\delta})_i^n.
	\end{align*}
	Also, from $(\tilde{T}_{I,\delta})_i^n>0$, we have
	\begin{equation*}
		(\tilde{T}_{\theta})_i^n= (1-\theta)(\tilde{T}_{I,\delta})_i^n+\theta (\tilde{T}_{\delta})_i^n\geq \theta (\tilde{T}_{\delta})_i^n.
	\end{equation*}
	This completes the proof.

\end{proof}
\section{Stability of the discrete distribution function}\label{stability sec}
The goal of this section is to show that the numerical solutions and its corresponding macroscopic quantities are uniformly bounded.
First, we define three constants which will be used throughout this section.
\begin{definition}\label{D.5.1}
	We define constants $\bar{C}_{a,b}, \bar{C}_{a,b,q,\delta}$ and $\bar{C}_{\delta,q-m}$ by
	\begin{align*}
		\bar{C}_{a,b} &:= \int_{\mathbb{R}^3\times\mathbb{R}_+} e^{-C_0^2(|v|^a +I^b)} dv dI,\cr
		\bar{C}_{a,b,q,\delta} &:= \sup_{v,I} e^{-C_0^2(|v|^a +I^b)} (1+|v|^2+I^{\frac{2}{\delta}})^{\frac{q}{2}},\cr
		\bar{C}_{\delta,q-m} &:= \int_{\mathbb{R}^3\times\mathbb{R}_+} \frac{1}{(1+|v|^2+I^{\frac{2}{\delta}})^{\frac{q-m}{2}}}dv dI, \quad q -m> \max(2,\delta).
	\end{align*}
	where $a,b,m,q$ are constants and $C_0^2$ is defined in \ref{T.3.1 cond}.
\end{definition}
In the following, we summarize the main stability estimates of this section as $E_1^n$ and $E_2^n$.
\begin{definition}\label{D.5.2}
	For $n \geq 1$, we say that 
	\begin{enumerate}
		\item  $f_{i,j,k}^n$ satisfies $E_1^n$, if $A^n$ and $B^n$ hold:
		\begin{align*}
			(A^n)& \quad \|\tilde{f}_{i,j,k}^n\|_{L_q^\infty}  \leq \bigg(\frac{\kappa  +   A_{\nu,\theta} \Delta t C_{\mathcal{M}}}{\kappa + A_{\nu,\theta} \Delta t}\bigg)^n \|f_0\|_{L_q^\infty} \leq e^{\frac{ C_{\mathcal{M}}A_{\nu,\theta}  }{\kappa + A_{\nu,\theta} \Delta t}T^f} \|f_0\|_{L_q^\infty},\\ 
			(B^n)& \quad \tilde{f}_{i,j,k}^n \geq \bigg(\frac{\kappa}{\kappa + A_{\nu,\theta} \Delta t}\bigg)^n C_0^1e^{-C_0^2(|v_j|^a +I_k^b)} \geq e^{-\frac{A_{\nu,\theta}}{\kappa} T^f} C_0^1e^{-C_0^2(|v_j|^a +I_k^b)}.
		\end{align*}
	\item  $f_{i,j,k}^n$ satisfies $E_2^n$, if $C^n$ and $D^n$ hold:
	\begin{align*}
	(C^n)& \quad \tilde{\rho}_i^n \geq \frac{1}{2}\bar{C}_{a,b} C_0^1e^{-\frac{A_{\nu,\theta}}{\kappa} T^f}=:\tilde{\rho}_{lower},\cr
	& \quad (\tilde{T}_\delta)_i^n \geq  \bigg( \frac{1}{2}\frac{\bar{C}_{a,b} C_0^1}{C_\delta \|f_0\|_{L_{q}^{\infty}}}e^{-\big(\frac{1}{\kappa}  +\frac{ C_{\mathcal{M}} }{\kappa + A_{\nu,\theta} \Delta t}\big)T^f}\bigg)^{\frac{2}{3+\delta}}=:(\tilde{T}_\delta)_{lower}.  \cr
	(D^n)& \quad \|\tilde{\rho}^n\|_{L_x^\infty} \leq 2 \bar{C}_{\delta,q} e^{\frac{ C_{\mathcal{M}}A_{\nu,\theta}  }{\kappa + A_{\nu,\theta} \Delta t}T^f} \|f_0\|_{L_{q}^{\infty}}=:\tilde{\rho}_{upper},\cr
	& \quad \|\tilde{U}^n\|_{L_x^\infty} \leq \frac{4\bar{C}_{\delta,q-1}}{\bar{C}_{a,b} C_0^1} e^{\big(\frac{1}{\kappa}  + \frac{ C_{\mathcal{M}}  }{\kappa + A_{\nu,\theta} \Delta t}\big)A_{\nu,\theta}T^f} \|f_0\|_{L_{q}^{\infty}}=:\tilde{U}_{upper},\cr
	& \quad \|(\tilde{T}_\delta)^n\|_{L_x^\infty} \leq  \frac{8}{3+\delta}\frac{\bar{C}_{\delta,q-2}}{\bar{C}_{a,b} C_0^1}e^{\big(\frac{1}{\kappa}  + \frac{ C_{\mathcal{M}}  }{\kappa + A_{\nu,\theta} \Delta t}\big)A_{\nu,\theta}T^f} \|f_0\|_{L_{q}^{\infty}}=:(\tilde{T}_\delta)_{upper}.
	\end{align*}
	\item 
	We define $E^n = E_1^n \wedge E_2^n$.
	\end{enumerate}
\end{definition}

\begin{remark} 
	The constants $C_0^1$ and $C_0^2$ are defined in \eqref{T.3.1 cond}. Also, the definition of $C_{\mathcal{M}}$ is given in Lemma \ref{L.5.9}. In $A^n$ and $B^n$. 
\end{remark}

To state the main result of this section, we need another technical definitions.
\begin{definition}\label{D.5.4}
	We define $a_1, a_2$ and $a_3$ by
	\begin{align*}
		a_1:&=\left(\frac{1}{2^{\frac{8+\delta}{2}}\pi^2(3+\delta)^{\frac{1+\delta}{2}}}\frac{\tilde{\rho}_{lower} (\tilde{T}_{\delta})_{lower}}{ \|f_0\|_{L_q^{\infty}}} e^{-\frac{ C_{\mathcal{M}}A_{\nu,\theta}  }{\kappa + A_{\nu,\theta} \Delta t}T^f} \right)^\frac{1}{5+\delta},\cr
		a_2&:=\left(\frac{q-\delta-5}{2^{-\frac{2+\delta}{2}}\pi^2(3+\delta)^{\frac{3+\delta}{2}}}\frac{\tilde{\rho}_{lower}}{ \|f_0\|_{L_{q}^{\infty}}} e^{-\frac{ C_{\mathcal{M}}A_{\nu,\theta}  }{\kappa + A_{\nu,\theta} \Delta t}T^f}\right)^\frac{1}{\delta+3-q},\cr
		a_3&:=\left(\frac{1}{2^{\frac{8+\delta-q}{2}}\pi^2(3+\delta)^{\frac{1+\delta-q}{2}}}\frac{\tilde{\rho}_{lower} (\tilde{T}_\delta)_{lower}^{q} }{ \|f_0\|_{L_{q}^{\infty}}}e^{-\frac{ C_{\mathcal{M}}A_{\nu,\theta}  }{\kappa + A_{\nu,\theta} \Delta t}T^f}\right)^\frac{1}{3+\delta+q}.
	\end{align*}

\end{definition}

The following stability estimate is the main result of this section.
\begin{theorem}\label{T.5.5}
	Choose $l> 0$ small enough so that $\Delta v ,\Delta I < l$ satisfies
	\begin{align}\label{T.5.5 C}
\begin{split}
			\frac{1}{2}\bar{C}_{a,b} &< \sum_{j,k}e^{-C_0^2(|v_j|^a +I_k^b)} (\Delta v)^3 \Delta I <2\bar{C}_{a,b}, \cr
	\frac{1}{2}\bar{C}_{a,b,q,\delta}&< \sup_{j,k} e^{-C_0^2(|v_j|^a +I_k^b)} \left(1+|v_j|^2+I_k^{\frac{2}{\delta}}\right)^{\frac{q}{2}} <2\bar{C}_{a,b,q,\delta}, \cr
	\frac{1}{2}\bar{C}_{\delta,q-m} &< \sum_{j,k} \frac{1}{\left(1+|v_j|^2+I_k^{\frac{2}{\delta}}\right)^{\frac{q-m}{2}}} (\Delta v)^3 \Delta I <2\bar{C}_{\delta,q-m},
\end{split}
	\end{align}
and
	\begin{align}\label{T.5.5 A}
\begin{split}
		\sum_{\mathcal{A}(v_j,\tilde{U}_i^n,I_k) \leq R+\Delta v + \Delta I}  (\Delta v)^3 \Delta I &\leq \int_{\mathcal{A}(v,\tilde{U}_i^n,I) \leq 2(R+\Delta v + \Delta I)} dv dI,\\
	\sum_{\mathcal{A}(v_j,0,I_k)>R+2\Delta v + 2\Delta I}\frac{1}{\left|\mathcal{A}(v_j,0,I_k)\right|^{q-2}}(\Delta v)^3 \Delta I
	&\leq \int_{\mathcal{A}(v,0,I)>R+\Delta v + \Delta I}\frac{1}{\left|\mathcal{A}(v,0,I)\right|^{q-2}}dv dI,
\end{split}
	\end{align}
	where  
	\[
	\mathcal{A}(a,b,c):=\left( \frac{1}{3+\delta}|a-b|^2 + \frac{2}{3+\delta}c^{\frac{2}{\delta}}  \right)^\frac{1}{2}.
	\]
	Also, assume that $\Delta v$ and $\Delta I$ satisfies
	\begin{align}\label{cond T.5.5}
		\Delta v +\Delta I<  \min\left(a_1,a_2,a_3,l,\frac{1}{2}\right)=: r_{\Delta v,\Delta I},
	\end{align}
	where $a_1,a_2,a_3$ are defined in Definition \ref{D.5.4}. 
	Then, $f_{i,j,k}^n$ satisfies $E^n$ for all $n \geq 0$.
\end{theorem}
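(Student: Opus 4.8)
The plan is to prove Theorem~\ref{T.5.5} by induction on $n$, establishing that the statement $E^n = E_1^n \wedge E_2^n$ holds for every $n \geq 0$. The base case $n = 0$ is essentially built into the hypothesis \eqref{no initial err} together with the initial conditions \eqref{T.3.1 cond}: the bound $A^0$ follows from Lemma~\ref{L.4.1}, the lower bound $B^0$ follows directly from condition (2) in \eqref{T.3.1 cond} since $\tilde f^0_{i,j,k} = f_0(x_i - v_j^1\Delta t, v_j, I_k) > C_0^1 e^{-C_0^2(|v_j|^a + I_k^b)}$, and the macroscopic bounds $C^0$, $D^0$ are obtained by integrating these pointwise bounds against $1$, $v$, $|v - \tilde U|^2 + I^{2/\delta}$ and using the quadrature comparison inequalities \eqref{T.5.5 C}--\eqref{T.5.5 A}, with the constants $\bar C_{a,b}$, $\bar C_{\delta, q-m}$ from Definition~\ref{D.5.1} absorbing the integrals.

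For the inductive step, assume $E^n$ holds; I want to deduce $E^{n+1}$. First I would prove $E_1^{n+1}$. The upper bound $A^{n+1}$ comes from the scheme \eqref{main scheme}: writing $f^{n+1}_{i,j,k}$ as the convex-type combination $\frac{\kappa \tilde f^n_{i,j,k} + A_{\nu,\theta}\Delta t\, \mathcal M_{\nu,\theta}(\tilde f^n_{i,j,k})}{\kappa + A_{\nu,\theta}\Delta t}$, one bounds $\|\tilde f^n\|_{L_q^\infty}$ by the inductive hypothesis and $\|\mathcal M_{\nu,\theta}(\tilde f^n)\|_{L_q^\infty}$ by $C_{\mathcal M}\|\tilde f^n\|_{L_q^\infty}$ using Lemma~\ref{L.5.9} (the definition of $C_{\mathcal M}$), then applies Lemma~\ref{L.4.1} to pass from $f^{n+1}$ to $\tilde f^{n+1}$; the telescoping product $\left(\frac{\kappa + A_{\nu,\theta}\Delta t\, C_{\mathcal M}}{\kappa + A_{\nu,\theta}\Delta t}\right)^{n+1}$ is then bounded by the exponential using $N_t \Delta t = T^f$. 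The lower bound $B^{n+1}$ is similar but only keeps the $\kappa \tilde f^n_{i,j,k}$ term (discarding the nonnegative Gaussian contribution), giving the factor $\left(\frac{\kappa}{\kappa + A_{\nu,\theta}\Delta t}\right)^{n+1}$, and again Lemma~\ref{L.4.1}; here the key point is that $\tilde f^{n+1}$, being a convex combination with weights $a_j \in (0,1]$ of $f^{n+1}$ values at neighboring spatial nodes \emph{at the same} $v_j, I_k$, inherits the same $(v_j, I_k)$-dependent lower bound.

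Next I would establish $E_2^{n+1}$ from $E_1^{n+1}$. The density lower bound in $C^{n+1}$ follows by integrating $B^{n+1}$ against $(\Delta v)^3\Delta I$ and invoking the first inequality in \eqref{T.5.5 C}. The upper bounds in $D^{n+1}$ for $\tilde\rho$, $\tilde U$, $\tilde T_\delta$ follow by integrating $A^{n+1}$ (which gives pointwise decay $\lesssim (1 + |v_j|^2 + I_k^{2/\delta})^{-q/2}$) against $1$, $|v_j|$, and $|v_j - \tilde U_i^n|^2 + I_k^{2/\delta}$ respectively, using the third inequality in \eqref{T.5.5 C} with $m = 0, 1, 2$ and the already-established lower bound on $\tilde\rho$ in the denominators of $\tilde U$ and $\tilde T_\delta$; note $q > 5 + \delta$ ensures the relevant integrals $\bar C_{\delta, q-m}$ converge. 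The genuinely delicate part — and the step I expect to be the main obstacle — is the \emph{lower} bound on the polyatomic temperature $(\tilde T_\delta)_i^{n+1}$ in $C^{n+1}$. A large pointwise lower bound on $\tilde f$ alone does not prevent the temperature from being small, because a distribution tightly concentrated near $\tilde U_i^n$ has small temperature regardless. The standard device (as in \cite{RSY,RY}) is to split the integral $\int (|v - \tilde U|^2 + I^{2/\delta}) \tilde f\, dv\, dI$ into a region $\mathcal A(v_j, \tilde U_i^n, I_k) > R$ where $|v - \tilde U|^2 + I^{2/\delta}$ is bounded below by $R^2$, and use the lower bound $B^{n+1}$ together with a lower bound on the \emph{mass} carried in that far region; to guarantee enough mass sits there, one uses that the total mass $\tilde\rho$ is bounded \emph{above} (from $D^{n+1}$) so the mass in the near region $\mathcal A \leq R$ cannot be too large, and the quadrature comparison \eqref{T.5.5 A} controls the discretization of these truncated integrals. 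Optimizing over $R$ and carefully tracking how the thresholds $a_1, a_2, a_3$ of Definition~\ref{D.5.4} and the smallness condition \eqref{cond T.5.5} enter is what makes this step technical; the constants are precisely engineered so that choosing $R$ of the right size (comparable to $(\tilde T_\delta)_{lower}^{1/2}$) closes the estimate with the stated constant, and the requirement $\Delta v + \Delta I < \min(a_1, a_2, a_3, l, \tfrac12)$ ensures the discrete sums are within a factor $2$ of their continuous counterparts throughout.
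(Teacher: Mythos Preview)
Your overall induction structure is correct and matches the paper: the base case is Lemma~\ref{L.5.11}, and the inductive step is Lemmas~\ref{L.5.12}--\ref{L.5.15}, establishing $A^n$, $B^n$, $C^n$, $D^n$ in that order. Your arguments for $A^{n+1}$, $B^{n+1}$, the density lower bound in $C^{n+1}$, and all of $D^{n+1}$ are exactly what the paper does.

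The one place you diverge from the paper is the temperature lower bound $(\tilde T_\delta)_i^n \geq (\tilde T_\delta)_{\text{lower}}$. You propose to split the \emph{temperature integral} $\sum \tilde f_{i,j,k}^n(|v_j-\tilde U_i^n|^2 + I_k^{2/\delta})$ into near/far pieces and use the pointwise lower bound $B^{n}$ together with a mass-balance argument; your description there is slightly garbled (an upper bound on total mass does not by itself bound the near-region mass from above), and your remark that ``a large pointwise lower bound on $\tilde f$ alone does not prevent the temperature from being small'' is actually not true once you also have $\tilde\rho$ bounded above --- the fixed Gaussian-type tail in $B^n$ forbids tight concentration. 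The paper takes a different, cleaner route: it never splits the temperature integral. Instead, Lemma~\ref{L.5.6} splits the \emph{density} $\tilde\rho_i^n$ into near/far pieces using only the \emph{upper} bound $A^n$, optimizes over $R$ (this is where $a_1$ and condition~\eqref{T.5.5 A} enter), and obtains the functional inequality
\[
\tilde\rho_i^n \leq C_\delta \|f^n\|_{L_q^\infty}\big((\tilde T_\delta)_i^n\big)^{\frac{3+\delta}{2}},
\]
which is then simply inverted in Lemma~\ref{L.5.14} using the already-established density lower bound and the $A^n$ upper bound on $\|f^n\|_{L_q^\infty}$. The thresholds $a_2$, $a_3$ are not used here at all --- they enter separately in Lemmas~\ref{L.5.7} and~\ref{L.5.8}, which feed into the Gaussian estimate of Lemma~\ref{L.5.9} needed for $A^n$. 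So your sketch mislocates where the three $R$-optimization arguments actually sit in the logic, and your proposed route for the temperature bound, while probably repairable, is not the one the paper takes.
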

Since several technical lemmas have to be established, we postpone the proof of this theorem to the end of this section.

\begin{lemma}\label{L.5.6} Assume $f_{i,j,k}^n$ satisfies $E^n$ and the condition \eqref{cond T.5.5} holds. Then, 
	\begin{align*}
	\tilde{\rho}_i^n \leq C_{\delta}\|f^n\|_{L_q^{\infty}}\big((\tilde{T}_{\delta})_i^n\big)^{\frac{3+\delta}{2}},
	\end{align*}
	where
	\[
	C_{\delta}=2^{\frac{13+2\delta}{2}}\pi^2(3+\delta)^{\frac{1+\delta}{2}}.
	\]
\end{lemma}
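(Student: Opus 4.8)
\textbf{Proof proposal for Lemma \ref{L.5.6}.}

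The plan is to bound $\tilde{\rho}_i^n$ from above by splitting the velocity–internal-energy sum defining $\tilde{\rho}_i^n$ into a region close to the bulk velocity $\tilde U_i^n$ and its complement, using the weighted bound $\|f^n\|_{L_q^\infty}$ (via Lemma \ref{L.4.1}, $\|\tilde f^n\|_{L_q^\infty}\le\|f^n\|_{L_q^\infty}$) together with a lower bound on the polyatomic temperature that forces enough "spread" in $\tilde f^n_{i,j,k}$. First I would recall that
\[
\tilde\rho_i^n(\tilde T_\delta)_i^n = \frac{2}{3+\delta}\sum_{j,k}\tilde f_{i,j,k}^n\Big(\tfrac{1}{2}|v_j-\tilde U_i^n|^2 + I_k^{2/\delta}\Big)(\Delta v)^3\Delta I,
\]
so in terms of $\mathcal A(v_j,\tilde U_i^n,I_k)$ this is $\frac{1}{3+\delta}\sum_{j,k}\tilde f^n_{i,j,k}\,(3+\delta)\,\mathcal A(v_j,\tilde U_i^n,I_k)^2 (\Delta v)^3\Delta I$, i.e. $\tilde\rho_i^n(\tilde T_\delta)_i^n=\sum_{j,k}\tilde f^n_{i,j,k}\,\mathcal A_{ijk}^2(\Delta v)^3\Delta I$ with $\mathcal A_{ijk}:=\mathcal A(v_j,\tilde U_i^n,I_k)$. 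The idea is then: on the set $\{\mathcal A_{ijk}>R\}$ we can write $\tilde f^n_{i,j,k}\le R^{-2}\tilde f^n_{i,j,k}\mathcal A_{ijk}^2$, so the tail contributes at most $R^{-2}\tilde\rho_i^n(\tilde T_\delta)_i^n$; on the complementary set $\{\mathcal A_{ijk}\le R\}$ we use $\tilde f^n_{i,j,k}\le\|\tilde f^n\|_{L_q^\infty}\le\|f^n\|_{L_q^\infty}$ (dropping the weight, which is $\ge1$) and bound the number of such nodes times $(\Delta v)^3\Delta I$ by the volume of $\{\mathcal A(v,\tilde U_i^n,I)\le 2R\}$ via the first inequality in \eqref{T.5.5 A} (absorbing the $\Delta v+\Delta I$ shift, legitimate under \eqref{cond T.5.5}). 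That volume is a constant times $R^{3+\delta}$ by the change of variables $w=v-\tilde U_i^n$ and scaling in $I$, keeping careful track of the $(3+\delta)$-dependent constant coming from $\mathcal A$. Collecting terms,
\[
\tilde\rho_i^n \le R^{-2}\,\tilde\rho_i^n(\tilde T_\delta)_i^n + c_\delta R^{3+\delta}\|f^n\|_{L_q^\infty}
\]
for an explicit $c_\delta$.

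Next I would optimize (or simply make a judicious choice of) $R$. Choosing $R^2 = 2(\tilde T_\delta)_i^n$ makes the first term $\le\frac12\tilde\rho_i^n$, which can be absorbed into the left-hand side, yielding $\tilde\rho_i^n\le 2c_\delta (2(\tilde T_\delta)_i^n)^{(3+\delta)/2}\|f^n\|_{L_q^\infty}$; tracking the powers of $2$ and $\pi$ and the $(3+\delta)^{(1+\delta)/2}$ factor from the volume computation should reproduce the stated $C_\delta = 2^{(13+2\delta)/2}\pi^2(3+\delta)^{(1+\delta)/2}$. One subtlety: the choice $R^2=2(\tilde T_\delta)_i^n$ must be compatible with the discretization hypothesis, i.e. $R$ must be large relative to $\Delta v+\Delta I$ so that the $\Delta v+\Delta I$ shifts in \eqref{T.5.5 A} are harmless; this is where the lower bound $(\tilde T_\delta)_i^n\ge(\tilde T_\delta)_{lower}$ from $E^n$ (property $C^n$) enters, together with the smallness conditions \eqref{cond T.5.5} on $\Delta v,\Delta I$ (in particular the definitions of $a_1,a_2,a_3$), guaranteeing $R+2\Delta v+2\Delta I \le 2R$. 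I would also need $\|f^n\|_{L_q^\infty}$ finite, which again follows from $E^n$ (property $A^n$), though the statement only uses it as an upper bound so we may keep it symbolic.

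The main obstacle I anticipate is purely bookkeeping rather than conceptual: getting the constant $C_\delta$ exactly as claimed requires a scrupulous accounting of (i) the Jacobian and scaling constants in $\int_{\mathcal A(v,\tilde U_i^n,I)\le 2R}dv\,dI$ — namely $\mathrm{vol} = \omega_3\,\beta_\delta\,(3+\delta)^{(3)/2}\,(3+\delta)^{\delta/2}/2^{\delta/2}\cdot\cdots\,R^{3+\delta}$ for suitable universal constants, where the $3$-sphere surface area brings the $\pi^2$ and $I^{2/\delta}\le c$ brings a factor involving $\delta$ — and (ii) the factors of $2$ generated by both the absorption step and the doubling in \eqref{T.5.5 A}. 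It is easy to be off by a power of $2$ or a factor of $\delta$; the safe route is to carry the volume computation symbolically as $c_\delta R^{3+\delta}$, derive $\tilde\rho_i^n\le 2^{(5+\delta)/2}c_\delta\,((\tilde T_\delta)_i^n)^{(3+\delta)/2}\|f^n\|_{L_q^\infty}$, and only at the very end substitute the explicit value of $c_\delta$ to match $C_\delta$. No deep estimate is needed — monotonicity of $x\mapsto x^{-2}$ on the tail, the weight being $\ge1$ on the bulk, and the Riemann-sum-to-integral comparison \eqref{T.5.5 A} do all the work.
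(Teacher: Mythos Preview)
Your proposal is correct and follows essentially the same route as the paper: split the sum defining $\tilde\rho_i^n$ according to whether $\mathcal A(v_j,\tilde U_i^n,I_k)$ exceeds a threshold $R$, bound the far piece by $R^{-2}\tilde\rho_i^n(\tilde T_\delta)_i^n$ and the near piece by $c_\delta R^{3+\delta}\|\tilde f^n\|_{L_q^\infty}$ via the volume comparison \eqref{T.5.5 A} (the paper computes this volume with exactly the spherical-type change of variables you anticipate, giving $c_\delta=2^{(8+\delta)/2}\pi^2(3+\delta)^{(1+\delta)/2}$), then choose $R$. The only cosmetic difference is that the paper \emph{equates} the two pieces and then algebraically solves the resulting implicit inequality for $\tilde\rho_i^n$, whereas you set $R^2=2(\tilde T_\delta)_i^n$ and absorb; both choices produce the identical constant $C_\delta=2^{(5+\delta)/2}c_\delta$.

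One small caveat on the subtlety you flag: the paper's optimizing choice of $R+\Delta v+\Delta I$ is, by construction, bounded below \emph{exactly} by the quantity $a_1$ appearing in \eqref{cond T.5.5}, so positivity of $R$ is automatic. Your absorption choice $R=\sqrt{2(\tilde T_\delta)_i^n}$ is not the same number, and the condition $\Delta v+\Delta I<a_1$ does not literally guarantee $\Delta v+\Delta I\lesssim R$ for your $R$; you would need to invoke $(\tilde T_\delta)_{lower}$ directly with a separate (equally harmless) smallness assumption, or simply switch to the paper's optimizing $R$ at that step.
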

\begin{proof}
	We first divide the macroscopic density $\tilde{\rho}_i^n$ into two parts:
	\begin{align*}
		\begin{split}
		\tilde{\rho}_i^n&=\sum_{\mathcal{A}(v_j,\tilde{U}_i^n,I_k) > R+\Delta v + \Delta I} \tilde{f}_{i,j,k}^{n}  (\Delta v)^3 \Delta I+\sum_{\mathcal{A}(v_j,\tilde{U}_i^n,I_k) \leq R+\Delta v + \Delta I} \tilde{f}_{i,j,k}^{n}  (\Delta v)^3 \Delta I\cr
		&\equiv \mathcal{I}_{11} + \mathcal{I}_{12}.
		\end{split}
	\end{align*}
	The first term $\mathcal{I}_{11}$ is bounded by 
	\begin{align*}
	\mathcal{I}_{11} &=\sum_{\mathcal{A}(v_j,\tilde{U}_i^n,I_k) > R+\Delta v + \Delta I} \tilde{f}_{i,j,k}^{n} (\Delta v)^3 \Delta I\cr
	&\leq\sum_{\mathcal{A}(v_j,\tilde{U}_i^n,I_k) > R+\Delta v + \Delta I} \tilde{f}_{i,j,k}^{n} \frac{ \frac{1}{3+\delta}|v_j-\tilde{U}_i^n|^2 + \frac{2}{3+\delta}I_k^{\frac{2}{\delta}}}{(R+\Delta v + \Delta I)^2} (\Delta v)^3 \Delta I\cr
	&\leq \frac{1}{(R+\Delta v + \Delta I)^2}\tilde{\rho}_i^n (\tilde{T}_{\delta})_i^n.
	\end{align*}
	Since $\Delta v$ and $\Delta I$ satisfies \eqref{T.5.5 A}, we can bound $\mathcal{I}_{12}$ by
	\begin{align}\label{definite integral}
	\begin{split}
	\mathcal{I}_{12}&=\sum_{\frac{1}{3+\delta}|v_j-\tilde{U}_i^n|^2+\frac{2}{3+\delta}I_k^{\frac{2}{\delta}} \leq (R+\Delta v + \Delta I)^2} \tilde{f}_{i,j,k}^{n}  (\Delta v)^3 \Delta I\cr
	&\leq \left(\int_{\frac{1}{3+\delta}|v-\tilde{U}_i^n|^2+\frac{2}{3+\delta}I^{\frac{2}{\delta}} \leq 4(R+\Delta v + \Delta I)^2} dv dI \right)\|\tilde{f}^n\|_{L_{q}^{\infty}}.
	\end{split}
	\end{align}
	To calculate the definite integral in \eqref{definite integral}, we use a change of variable:
	\begin{align*}
	\left(\sqrt{\frac{1}{3+\delta}}(v-\tilde{U}_i^n), \sqrt{\frac{2}{3+\delta}}I^{\frac{1}{\delta}}\right)&=\big(r\sin\varphi\cos\theta\sin k,r\sin\varphi\sin\theta\sin k,r\cos\varphi\sin k, r\cos k\big),
	\end{align*}
	where 
	\[
	0\leq r\leq 2(R+\Delta v + \Delta I),\quad  0\leq\varphi\leq\pi,\quad  0\leq\theta\leq2\pi,\quad 0\leq k \leq \frac{\pi}{2}.
	\]
	Then, the Jacobian is given by 
	\begin{align*}
		\left|\frac{\partial(v^1-(\tilde{U}_i^n)^1,v^2-(\tilde{U}_i^n)^2,v^3-(\tilde{U}_i^n)^3,I)}{\partial(r,\varphi,\theta,k)}\right|
		&= 2^{-\frac{\delta}{2}}\left(3+\delta\right)^{\frac{3+\delta}{2}}\delta r^{\delta+2}|\sin\varphi\cos^{\delta-1}k\sin^2k|,
	\end{align*}
	and we have
	\begin{align*}
	\mathcal{I}_{12}&\leq\|\tilde{f}^n\|_{L_{q}^{\infty}} 2^{-\frac{\delta}{2}}(3+\delta)^{\frac{3+\delta}{2}}
	\int_{0}^{\frac{\pi}{2}}\int_{0}^{\pi}\int_{0}^{2\pi}\int_{0}^{2(R+\Delta v + \Delta I)} \delta r^{\delta+2}|\sin\varphi\cos^{\delta-1}k\sin^2k| drd\theta d\varphi dk.
	\end{align*}
	Using
	\begin{align*}
		&\int_{0}^{\frac{\pi}{2}} \delta |\cos^{\delta-1}k\sin^2k| dk \leq \int_{0}^{\frac{\pi}{2}} \delta \cos^{\delta-1}k\sin{k} dk = 1,\cr
		&\int_{0}^{\pi} |\sin\varphi| d\varphi \leq \pi, \quad \int_{0}^{2\pi} d\theta \leq 2\pi,\cr
		&\int_{0}^{2(R+\Delta v + \Delta I)}  r^{\delta+2}dr \leq \frac{1}{3+ \delta}\left(2(R+\Delta v + \Delta I)\right)^{3+\delta},
	\end{align*}
	we obtain
	\begin{align*}
	\mathcal{I}_{12}
	&\leq \|\tilde{f}^n\|_{L_{q}^{\infty}}
	\left\{2^{-\frac{\delta}{2}}(3+\delta)^{\frac{3+\delta}{2}}\frac{2\pi^2}{3+\delta}      \right\}\bigg( 2(R+\Delta v + \Delta I) \bigg)^{3+\delta} \cr
	&= \|\tilde{f}^n\|_{L_{q}^{\infty}}\left\{2^{\frac{8+\delta}{2}}\pi^2(3+\delta)^{\frac{1+\delta}{2}} \right\}\big( R+\Delta v + \Delta I \big)^{3+\delta}.
	\end{align*}
	Combining the estimates for $\mathcal{I}_{11}$ and $\mathcal{I}_{12}$, we derive
	\begin{align*}
	\tilde{\rho}_i^n \leq
	\frac{1}{(R+\Delta v + \Delta I)^2}\tilde{\rho}_i^n (\tilde{T}_{\delta})_i^n + \left\{2^{\frac{8+\delta}{2}}\pi^2(3+\delta)^{\frac{1+\delta}{2}} \right\}(R+\Delta v + \Delta I)^{3+\delta}\|\tilde{f}^n\|_{L_q^{\infty}}.
	\end{align*}
	Here, we equates two terms on the upper bound so that the bound can be minimized. That is, the number $R$ is taken by
	\[
	R+\Delta v + \Delta I=\left(\frac{\tilde{\rho}_i^n (\tilde{T}_{\delta})_i^n}{2^{\frac{8+\delta}{2}}\pi^2(3+\delta)^{\frac{1+\delta}{2}} \|\tilde{f}^n\|_{L_q^{\infty}}}\right)^\frac{1}{5+\delta} \geq a_1 > \Delta v + \Delta I,
	\]
	where $a_1$ is given in Definition \ref{D.5.4} and the last inequality holds due to \eqref{cond T.5.5}. With the choice of such $R>0$, we have
	\begin{align*}
	\tilde{\rho}_i^n
	&\leq
	2\left\{2^{\frac{8+\delta}{2}}\pi^2(3+\delta)^{\frac{1+\delta}{2}} \right\}^{\frac{2}{5+\delta}} \|\tilde{f}^n\|_{L_q^{\infty}}^{\frac{2}{5+\delta}} \left\{\tilde{\rho}_i^n (\tilde{T}_{\delta})_i^n\right\}^{\frac{3+\delta}{5+\delta}}.
	\end{align*}
	This, together with Lemma \ref{L.4.1}, gives
	\begin{align*}
	\tilde{\rho}_i^n
	\leq
	\left\{2^{\frac{13+2\delta}{2}}\pi^2(3+\delta)^{\frac{1+\delta}{2}} \right\}\|f^n\|_{L_q^{\infty}}\tilde{T}_{\delta}^{\frac{3+\delta}{2}},
	\end{align*}
	which completes the proof.
\end{proof}
\begin{lemma}\label{L.5.7} Let $q>5+\delta$. Suppose futher that $f_{i,j,k}^n$ satisfies $E^n$ and $\Delta v, \Delta I$ satisfy the condition \eqref{cond T.5.5}. Then,
	\begin{align*}
	\tilde{\rho}_i^n\big((\tilde{T}_\delta)_i^n+|\tilde{U}_i^n|^2\big)^{\frac{q-\delta-3}{2}} \leq C_{\delta,q,1}\|f^n\|_{L_{q}^{\infty}},
	\end{align*}
	where
	\[
	C_{\delta,q,1}=\left\{\frac{2^{\frac{q-2\delta-5}{2}}\pi^2(3+\delta)^{\frac{q}{2}}}{q-\delta-5}\right\}.
	\]
\end{lemma}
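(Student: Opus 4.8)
The plan is to adapt the two--region argument from the proof of Lemma~\ref{L.5.6}, but to center the decomposition at the origin instead of at $\tilde U_i^n$, so that $|\tilde U_i^n|^2$ is controlled on the same footing as $(\tilde T_\delta)_i^n$. Set
\[
S_i^n:=\sum_{j,k}\tilde f_{i,j,k}^n\,\mathcal{A}(v_j,0,I_k)^2\,(\Delta v)^3\Delta I
=\frac{1}{3+\delta}\sum_{j,k}\tilde f_{i,j,k}^n\big(|v_j|^2+2I_k^{\frac{2}{\delta}}\big)(\Delta v)^3\Delta I .
\]
From the definitions in \eqref{tilde TTT}, summation of $\tilde f_{i,j,k}^n|v_j-\tilde U_i^n|^2(\Delta v)^3\Delta I$ together with $\sum_{j,k}\tilde f_{i,j,k}^n v_j(\Delta v)^3\Delta I=\tilde\rho_i^n\tilde U_i^n$ gives $\tilde\rho_i^n(\tilde T_\delta)_i^n=S_i^n-\frac{1}{3+\delta}\tilde\rho_i^n|\tilde U_i^n|^2$, while the Cauchy--Schwarz inequality gives $\tilde\rho_i^n|\tilde U_i^n|^2\le\sum_{j,k}\tilde f_{i,j,k}^n|v_j|^2(\Delta v)^3\Delta I\le(3+\delta)S_i^n$. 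Adding these,
\[
\tilde\rho_i^n\big((\tilde T_\delta)_i^n+|\tilde U_i^n|^2\big)\le(3+\delta)\,S_i^n ,
\]
so it suffices to bound $S_i^n$ from above.

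I would then split the sum defining $S_i^n$ at a radius of the form $R+2\Delta v+2\Delta I$ (with $R>0$ to be chosen), according to whether $\mathcal{A}(v_j,0,I_k)$ lies below or above this threshold, exactly as in Lemma~\ref{L.5.6}. On the inner part $\mathcal{A}(v_j,0,I_k)^2$ is at most the square of the threshold, so that contribution is $\le(R+2\Delta v+2\Delta I)^2\,\tilde\rho_i^n$. On the outer part I would combine $\tilde f_{i,j,k}^n\le\|\tilde f^n\|_{L_q^\infty}(1+|v_j|^2+I_k^{2/\delta})^{-q/2}$ with $1+|v_j|^2+I_k^{2/\delta}\ge\mathcal{A}(v_j,0,I_k)^2/(3+\delta)$ to obtain
\[
\tilde f_{i,j,k}^n\,\mathcal{A}(v_j,0,I_k)^2\le(3+\delta)^{q/2}\,\|\tilde f^n\|_{L_q^\infty}\,\mathcal{A}(v_j,0,I_k)^{-(q-2)} ,
\]
dominate the resulting sum by $\int_{\mathcal{A}(v,0,I)>R+\Delta v+\Delta I}\mathcal{A}(v,0,I)^{-(q-2)}\,dv\,dI$ using the second inequality of \eqref{T.5.5 A}, and evaluate this integral via the same spherical-type change of variables as in Lemma~\ref{L.5.6} (Jacobian proportional to $r^{\delta+2}$); since $q>5+\delta$ this is a constant multiple of $(R+\Delta v+\Delta I)^{-(q-\delta-5)}/(q-\delta-5)$. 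Using $\|\tilde f^n\|_{L_q^\infty}\le\|f^n\|_{L_q^\infty}$ (Lemma~\ref{L.4.1}), this produces
\[
S_i^n\le(R+2\Delta v+2\Delta I)^2\,\tilde\rho_i^n+\frac{C_{\delta,q}}{q-\delta-5}\,\|f^n\|_{L_q^\infty}\,(R+\Delta v+\Delta I)^{-(q-\delta-5)} .
\]

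To finish, I would choose $R$ so as to balance (or to equate a fixed fraction of) the two terms on the right; the smallness conditions on $\Delta v,\Delta I$ in \eqref{cond T.5.5} guarantee, as for $a_1$ in Lemma~\ref{L.5.6}, that the resulting value is admissible, that is, strictly larger than $\Delta v+\Delta I$. The balanced estimate has the shape $S_i^n\le C_{\delta,q}'\,(\tilde\rho_i^n)^{\frac{q-\delta-5}{q-\delta-3}}\,\|f^n\|_{L_q^\infty}^{\frac{2}{q-\delta-3}}$. Substituting into $\tilde\rho_i^n((\tilde T_\delta)_i^n+|\tilde U_i^n|^2)\le(3+\delta)S_i^n$, dividing by $(\tilde\rho_i^n)^{\frac{q-\delta-5}{q-\delta-3}}$ and raising to the power $\frac{q-\delta-3}{2}$ cancels all powers of $\tilde\rho_i^n$ exactly and yields $\tilde\rho_i^n((\tilde T_\delta)_i^n+|\tilde U_i^n|^2)^{\frac{q-\delta-3}{2}}\le C_{\delta,q,1}\|f^n\|_{L_q^\infty}$, the precise constant following by collecting the powers of $2$ and $3+\delta$. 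The delicate point, and the reason the second moment $S_i^n$ must be used as intermediary, is that one cannot shortcut the first step by applying Jensen's inequality directly to raise the single power of $(\tilde T_\delta)_i^n+|\tilde U_i^n|^2$ to the $\frac{q-\delta-3}{2}$-th power: that route would force the estimate through $\sum_{j,k}\tilde f_{i,j,k}^n\mathcal{A}(v_j,0,I_k)^{q-\delta-3}(\Delta v)^3\Delta I$, which is only borderline convergent under the assumed $L_q^\infty$ decay, whereas recovering the high power from the optimization in $R$ loses nothing.
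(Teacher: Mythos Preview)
Your argument is essentially the paper's own proof: the paper also works with the second moment centered at the origin, observing directly that $\sum_{j,k}\tilde f_{i,j,k}^n\mathcal{A}(v_j,0,I_k)^2(\Delta v)^3\Delta I=\tilde\rho_i^n\big((\tilde T_\delta)_i^n+\tfrac{1}{3+\delta}|\tilde U_i^n|^2\big)$, splits at threshold $R+2\Delta v+2\Delta I$, bounds the outer part via the second inequality in \eqref{T.5.5 A} and the spherical change of variables, and optimizes in $R$ using the constant $a_2$ from Definition~\ref{D.5.4} to guarantee admissibility. The only cosmetic difference is that you introduce the name $S_i^n$ and pass through Cauchy--Schwarz to reach $\tilde\rho_i^n((\tilde T_\delta)_i^n+|\tilde U_i^n|^2)\le(3+\delta)S_i^n$, whereas the paper simply multiplies by $(3+\delta)$ at the end; both yield the stated constant $C_{\delta,q,1}$.
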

\begin{proof}
	We start by splitting the following quantity into two parts:
	\begin{align}\label{E-2}
	\begin{split}
	\tilde{\rho}_i^n\left((\tilde{T}_{\delta})_i^n+\frac{1}{3+\delta}|\tilde{U}_i^n|^2\right)
	&= \sum_{\mathcal{A}(v_j,0,I_k)>R+2\Delta v + 2\Delta I} \left(\frac{1}{3+\delta}|v_j|^2+\frac{2}{3+\delta}I_k^{\frac{2}{\delta}}\right)\tilde{f}_{i,j,k}^{n} (\Delta v)^3 \Delta I \cr
	&\quad + \sum_{\mathcal{A}(v_j,0,I_k)\leq R+2\Delta v + 2\Delta I} \left(\frac{1}{3+\delta}|v_j|^2+\frac{2}{3+\delta}I_k^{\frac{2}{\delta}}\right)\tilde{f}_{i,j,k}^{n} (\Delta v)^3 \Delta I\cr
	&=\mathcal{I}_{21}+\mathcal{I}_{22}.
	\end{split}
	\end{align} 
	The second term $\mathcal{I}_{22}$ is bounded by
	\begin{align}\label{com 1}
	\mathcal{I}_{22} \leq 4(R+\Delta v + \Delta I)^2 \tilde{\rho}_i^n.
	\end{align}
	For $\mathcal{I}_{21}$, we extract $\|f^n\|_{L^{\infty}_q}$ out of the summation:
	\begin{align*}
	\mathcal{I}_{21}
	&\leq \sum_{\frac{1}{3+\delta}|v_j|^2+\frac{2}{3+\delta}I_k^{\frac{2}{\delta}}>(R+2\Delta v + 2\Delta I)^2}
	\frac{\left(\frac{1}{3+\delta}|v_j|^2+\frac{2}{3+\delta}I_k^{\frac{2}{\delta}}\right)^{\frac{q}{2}}}{\left(\frac{1}{3+\delta}|v_j|^2+\frac{2}{3+\delta}I_k^{\frac{2}{\delta}}\right)^{\frac{q-2}{2}}} \tilde{f}_{i,j,k}^{n} (\Delta v)^3 \Delta I\cr
	&\leq \|\tilde{f}^n\|_{L_q^{\infty}}
	\sum_{\frac{1}{3+\delta}|v_j|^2+\frac{2}{3+\delta}I_k^{\frac{2}{\delta}}>(R+2\Delta v + 2\Delta I)^2}\frac{1}{\left(\frac{1}{3+\delta}|v_j|^2+\frac{2}{3+\delta}I_k^{\frac{2}{\delta}}\right)^{\frac{q-2}{2}}}(\Delta v)^3 \Delta I.
	\end{align*}
	As in Lemma {\ref{L.5.6}}, the condition \eqref{T.5.5 A} makes it possible to estimate the above discrete summation by a definite integral using a change of variable:
	\begin{align*}
		\left(\sqrt{\frac{1}{3+\delta}}v, \sqrt{\frac{2}{3+\delta}}I^{\frac{1}{\delta}}\right)&=\big(r\sin\varphi\cos\theta\sin k,r\sin\varphi\sin\theta\sin k,r\cos\varphi\sin k, r\cos k\big).
	\end{align*}
	Then, we get 
	\begin{align}\label{com 2}
	\begin{split}
		\mathcal{I}_{21}&\leq\|\tilde{f}^n\|_{L_q^{\infty}} \int_{0}^{\frac{\pi}{2}}\int_{0}^{\pi}\int_{0}^{2\pi}\int_{R+\Delta v + \Delta I}^{\infty}\frac{\delta\left(3+\delta\right)^{\frac{3}{2}}\left(\frac{3+\delta}{2}\right)^{\frac{\delta}{2}} r^{\delta+2}|\sin\varphi\cos^{\delta-1}k\sin^2k|}{r^{q-2}}dr d\theta d\varphi dk \cr
		&\leq \|\tilde{f}^n\|_{L_q^{\infty}} \left\{\frac{2\pi^2\left(3+\delta\right)^{\frac{3}{2}}\left(\frac{3+\delta}{2}\right)^{\frac{\delta}{2}}}{q-\delta-5}\right\} (R+\Delta v + \Delta I)^{\delta+5-q}\cr
		&=\|\tilde{f}^n\|_{L_q^{\infty}}
		\left\{\frac{2^{\frac{2-\delta}{2}}\pi^2(3+\delta)^{\frac{3+\delta}{2}}}{q-\delta-5}\right\}(R+\Delta v + \Delta I)^{\delta+5-q}.
	\end{split}
	\end{align}
	Combining \eqref{com 1} and \eqref{com 2}, we estimate (\ref{E-2}) by
	\begin{align*}
	&\tilde{\rho}_i^n\left((\tilde{T}_{\delta})_i^n+\frac{1}{3+\delta}|\tilde{U}_i^n|^2\right)\cr
	&\leq 4\tilde{\rho}_i^n (R+\Delta v + \Delta I)^2
	+\left\{\frac{2^{\frac{2-\delta}{2}}\pi^2(3+\delta)^{\frac{3+\delta}{2}}}{q-\delta-5}\right\}\|\tilde{f}^n\|_{L_q^{\infty}}(R+\Delta v + \Delta I)^{\delta+5-q}.
	\end{align*}
	To get an optimal bound, we equate two terms on the upper bound to derive
	\[
	R+\Delta v + \Delta I=\left(\frac{q-\delta-5}{2^{-\frac{2+\delta}{2}}\pi^2(3+\delta)^{\frac{3+\delta}{2}}}\frac{\tilde{\rho}_i^n}{\|\tilde{f}^n\|_{L_q^{\infty}}}\right)^\frac{1}{\delta+3-q}\geq a_2 > \Delta v + \Delta I,
	\]
	where such $R$ can be chosen due to the existence of $a_2$ given in Definition \ref{D.5.4}.
	Then,
	\begin{align*}
	\tilde{\rho}_i^n\left((\tilde{T}_{\delta})_i^n+\frac{1}{3+\delta}|\tilde{U}_i^n|^2\right) \leq 2\bigg\{\frac{2^{-\frac{2+\delta}{2}}\pi^2(3+\delta)^{\frac{3+\delta}{2}}}{q-\delta-5}\bigg\}^{\frac{2}{q-\delta-3}}(\tilde{\rho}_i^n)^{\frac{\delta+5-q}{\delta+3-q}}\|\tilde{f}^n\|_{L_q^{\infty}}^{\frac{2}{q-\delta-3}}.
	\end{align*}
	Consequently,
	\begin{align*}
	\tilde{\rho}_i^n\big((\tilde{T}_{\delta})_i^n+|\tilde{U}_i^n|^2\big)^{\frac{q-\delta-3}{2}}
	&\leq
	\{2(3+\delta)\}^{\frac{q-\delta-3}{2}} \left\{\frac{2^{-\frac{2+\delta}{2}}\pi^2(3+\delta)^{\frac{3+\delta}{2}}}{q-\delta-5}\right\}
	\|\tilde{f}^n\|_{L_q^{\infty}} \cr
	&=
	\left\{\frac{2^{\frac{q-2\delta-5}{2}}\pi^2(3+\delta)^{\frac{q}{2}}}{q-\delta-5}\right\}\|\tilde{f}^n\|_{L_q^{\infty}}.
	\end{align*}
	Combined with Lemma \ref{L.4.1}, this gives the desired estimate.
\end{proof}

\begin{lemma}\label{L.5.8}
	 Assume that $f_{i,j,k}^n$ satisfies $E^n$ and $\Delta v, \Delta I$ satisfy the condition \eqref{cond T.5.5}. Then,
	\begin{align*}
	\frac{\tilde{\rho}_i^n|\tilde{U}_i^n|^{3+\delta+q}}{\bigg(\big((\tilde{T}_{\delta})_i^n+|\tilde{U}_i^n|^2\big)(\tilde{T}_{\delta})_i^n\bigg)^{\frac{3+\delta}{2}}}
	\leq C_{\delta,q,2}\|f^n\|_{L_q^{\infty}},
	\end{align*}
	where $$C_{\delta,q,2}=2^{\frac{17+3\delta+2q}{2}}\pi^2(3+\delta)^{2+\delta}.$$
\end{lemma}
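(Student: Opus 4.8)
The plan is to first reduce the inequality to a cleaner one and then establish the latter by the near/far splitting argument already used in Lemmas~\ref{L.5.6} and~\ref{L.5.7}. Since $|\tilde{U}_i^n|^2\le(\tilde{T}_\delta)_i^n+|\tilde{U}_i^n|^2$ we have $|\tilde{U}_i^n|^{3+\delta}\le\big((\tilde{T}_\delta)_i^n+|\tilde{U}_i^n|^2\big)^{\frac{3+\delta}{2}}$, so that
\[
\frac{\tilde{\rho}_i^n|\tilde{U}_i^n|^{3+\delta+q}}{\big(((\tilde{T}_\delta)_i^n+|\tilde{U}_i^n|^2)(\tilde{T}_\delta)_i^n\big)^{\frac{3+\delta}{2}}}\ \le\ \frac{\tilde{\rho}_i^n|\tilde{U}_i^n|^{q}}{\big((\tilde{T}_\delta)_i^n\big)^{\frac{3+\delta}{2}}},
\]
and it suffices to bound the right-hand side by a constant multiple of $\|f^n\|_{L_q^\infty}$. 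I would then split into three regimes. If $|\tilde{U}_i^n|\le 2$, Lemma~\ref{L.5.6} gives $\tilde{\rho}_i^n\le C_\delta\|f^n\|_{L_q^\infty}\big((\tilde{T}_\delta)_i^n\big)^{\frac{3+\delta}{2}}$ and multiplying by $|\tilde{U}_i^n|^q\le 2^q$ closes this case. If $|\tilde{U}_i^n|^2\le c_\delta(\tilde{T}_\delta)_i^n$ for a fixed constant $c_\delta\ge1$, then $|\tilde{U}_i^n|^q\le c_\delta^{q/2}\big((\tilde{T}_\delta)_i^n\big)^{q/2}$, so the quantity is at most $c_\delta^{q/2}\,\tilde{\rho}_i^n\big((\tilde{T}_\delta)_i^n\big)^{\frac{q-3-\delta}{2}}\le c_\delta^{q/2}\,\tilde{\rho}_i^n\big((\tilde{T}_\delta)_i^n+|\tilde{U}_i^n|^2\big)^{\frac{q-3-\delta}{2}}$, which is controlled by Lemma~\ref{L.5.7} (the exponent is positive since $q>5+\delta$). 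The essential case is thus $|\tilde{U}_i^n|>2$ and $|\tilde{U}_i^n|^2>c_\delta(\tilde{T}_\delta)_i^n$.

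In that case I would decompose the discrete density exactly as in Lemma~\ref{L.5.6},
\[
\tilde{\rho}_i^n=\sum_{\mathcal{A}(v_j,\tilde{U}_i^n,I_k)>R+\Delta v+\Delta I}\tilde{f}_{i,j,k}^n(\Delta v)^3\Delta I\ +\ \sum_{\mathcal{A}(v_j,\tilde{U}_i^n,I_k)\le R+\Delta v+\Delta I}\tilde{f}_{i,j,k}^n(\Delta v)^3\Delta I\ =:\ \mathcal{I}_1+\mathcal{I}_2,
\]
and estimate the far part as there by $\mathcal{I}_1\le(R+\Delta v+\Delta I)^{-2}\tilde{\rho}_i^n(\tilde{T}_\delta)_i^n$. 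The new ingredient is the near part: on the set $\mathcal{A}(v_j,\tilde{U}_i^n,I_k)\le R+\Delta v+\Delta I$ one has $|v_j-\tilde{U}_i^n|\le\sqrt{3+\delta}\,(R+\Delta v+\Delta I)$, so if $R$ is taken small enough that $\sqrt{3+\delta}\,(R+\Delta v+\Delta I)\le|\tilde{U}_i^n|/2$, then $|v_j|\ge|\tilde{U}_i^n|/2$ and hence $1+|v_j|^2+I_k^{\frac{2}{\delta}}\ge|\tilde{U}_i^n|^2/4$, which gives
\[
\tilde{f}_{i,j,k}^n\ \le\ \|\tilde{f}^n\|_{L_q^\infty}\big(1+|v_j|^2+I_k^{\frac{2}{\delta}}\big)^{-\frac{q}{2}}\ \le\ 2^q\,\|\tilde{f}^n\|_{L_q^\infty}\,|\tilde{U}_i^n|^{-q}
\]
on that set. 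Summing, using the first inequality of~\eqref{T.5.5 A}, and applying the change-of-variables volume computation from the proof of Lemma~\ref{L.5.6}, one obtains $\mathcal{I}_2\le C'_\delta\,2^q\,\|\tilde{f}^n\|_{L_q^\infty}\,|\tilde{U}_i^n|^{-q}\,(R+\Delta v+\Delta I)^{3+\delta}$; this is where the extra decaying factor $|\tilde{U}_i^n|^{-q}$, absent in Lemma~\ref{L.5.6}, is produced.

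Finally I would optimize in $R$ by equating $\mathcal{I}_1$ and $\mathcal{I}_2$, which fixes $(R+\Delta v+\Delta I)^{5+\delta}$ to be a constant times $\tilde{\rho}_i^n(\tilde{T}_\delta)_i^n|\tilde{U}_i^n|^{q}/\|\tilde{f}^n\|_{L_q^\infty}$. Using the lower bounds $\tilde{\rho}_i^n\ge\tilde{\rho}_{lower}$, $(\tilde{T}_\delta)_i^n\ge(\tilde{T}_\delta)_{lower}$ from $E^n$ and $|\tilde{U}_i^n|>2$, the hypothesis~\eqref{cond T.5.5} guarantees $R+\Delta v+\Delta I>\Delta v+\Delta I$; and the side constraint $\sqrt{3+\delta}\,(R+\Delta v+\Delta I)\le|\tilde{U}_i^n|/2$ reduces, after raising to the $(5+\delta)$-th power and using $(\tilde{T}_\delta)_i^n<c_\delta^{-1}|\tilde{U}_i^n|^2\le|\tilde{U}_i^n|^2$, to $\tilde{\rho}_i^n|\tilde{U}_i^n|^{q-3-\delta}\le C\|\tilde{f}^n\|_{L_q^\infty}$, which is again Lemma~\ref{L.5.7}. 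Plugging this $R$ into $\tilde{\rho}_i^n\le\mathcal{I}_1+\mathcal{I}_2=2\mathcal{I}_1$ and rearranging yields $\tilde{\rho}_i^n|\tilde{U}_i^n|^{q}\le C\|\tilde{f}^n\|_{L_q^\infty}\big((\tilde{T}_\delta)_i^n\big)^{\frac{3+\delta}{2}}$, and Lemma~\ref{L.4.1} replaces $\|\tilde{f}^n\|_{L_q^\infty}$ by $\|f^n\|_{L_q^\infty}$, which is the reduced inequality. I expect the bookkeeping in the essential case to be the only genuine difficulty: the free radius $R$ must be at once large enough to dominate the mesh size, small enough that the near-ball velocities are of size comparable to $|\tilde{U}_i^n|$, and tuned to interpolate between the second-moment bound (giving $\mathcal{I}_1$) and the weighted $L_q^\infty$ bound (giving $\mathcal{I}_2$); showing that this window of admissible radii is nonempty is precisely where $E^n$ and the smallness of $\Delta v,\Delta I$ are used.
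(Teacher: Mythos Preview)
Your argument is correct in substance and genuinely different from the paper's route. The paper does \emph{not} reduce to $\tilde{\rho}_i^n|\tilde{U}_i^n|^{q}/((\tilde{T}_\delta)_i^n)^{\frac{3+\delta}{2}}$ or split into cases. Instead it works directly with the \emph{momentum}: it bounds
\[
|\tilde{\rho}_i^n\tilde{U}_i^n|\ \le\ \sum_{\mathcal{A}\le R+\Delta v+\Delta I}\tilde{f}_{i,j,k}^n|v_j|\,(\Delta v)^3\Delta I\ +\ \sum_{\mathcal{A}> R+\Delta v+\Delta I}\tilde{f}_{i,j,k}^n|v_j|\,(\Delta v)^3\Delta I,
\]
uses H\"older's inequality on the near sum (pulling out $(\tilde{\rho}_i^n)^{1-1/q}\|\tilde{f}^n\|_{L_q^\infty}^{1/q}$ and a volume factor) and a Cauchy--Schwarz step on the far sum (producing $\{\tilde{\rho}_i^n(|\tilde{U}_i^n|^2+(\tilde{T}_\delta)_i^n)\}^{1/2}\{\tilde{\rho}_i^n(\tilde{T}_\delta)_i^n\}^{1/2}/(R+\Delta v+\Delta I)$), then optimizes $R$; the optimizer is exactly $a_3$ from Definition~\ref{D.5.4}, which is why $a_3$ enters~\eqref{cond T.5.5}. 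This gives the stated inequality in one shot, with the displayed constant and with no side constraint to verify.

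Your approach trades the H\"older step for the pointwise observation that on the near ball $|v_j|\gtrsim|\tilde{U}_i^n|$, so the $L_q^\infty$ weight already supplies the factor $|\tilde{U}_i^n|^{-q}$; the resulting optimization reuses $a_1$ rather than $a_3$. That is a nice simplification of the mesh hypothesis, at the cost of case analysis and an extra constraint on $R$. Two small points: your threshold constant ``$c_\delta$'' must in fact be chosen large depending on $q$ as well (the side-constraint reduction needs $c_\delta^{-1}C_{\delta,q,1}$ to be dominated by the specific constant coming from the optimizer, and $C_{\delta,q,1}$ grows like $(3+\delta)^{q/2}$), so write it as $c_{\delta,q}$; and your route will not reproduce the explicit constant $C_{\delta,q,2}$ in the statement, only some $C(\delta,q)$, which is all that is used downstream in Lemma~\ref{L.5.9}.
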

\begin{proof}
	We split the macroscopic momentum into two parts:
	\begin{align*}
	|\tilde{\rho}_i^n\tilde{U}_i^n|
	&\leq
	\sum_{\mathcal{A}(v_j,\tilde{U}_i^n,I_k)\leq R+\Delta v + \Delta I}\tilde{f}_{i,j}^n|v_j|(\Delta v)^3 \Delta I
	+\sum_{\mathcal{A}(v_j,\tilde{U}_i^n,I_k)>R+\Delta v + \Delta I}\tilde{f}_{i,j}^n|v_j|(\Delta v)^3 \Delta I \cr
	&\equiv \mathcal{I}_{31}+\mathcal{I}_{32}.
	\end{align*}
	We first use H\"{o}lder's inequality to obtain
	\begin{align*}
	\mathcal{I}_{31}
	&\leq
	\left|\sum_{\mathcal{A}(v_j,\tilde{U}_i^n,I_k)\leq R+\Delta v + \Delta I} \tilde{f}_{i,j}^n(\Delta v)^3 \Delta I \right|^{1-\frac{1}{q}}
	\left|\sum_{\mathcal{A}(v_j,\tilde{U}_i^n,I_k)\leq R+\Delta v + \Delta I}\tilde{f}_{i,j}^n|v_j|^q(\Delta v)^3 \Delta I\right|^{\frac{1}{q}} \cr
	&\leq
	(\tilde{\rho}_i^n)^{1-\frac{1}{q}}\|\tilde{f}^n\|_{L_q^{\infty}}^{\frac{1}{q}}	\left|\sum_{\mathcal{A}(v_j,\tilde{U}_i^n,I_k)\leq R+\Delta v + \Delta I}(\Delta v)^3 \Delta I\right|^{\frac{1}{q}}.
	\end{align*}
	Then, we use the condition \eqref{T.5.5 A} to get
	\begin{align*}
	\sum_{\mathcal{A}(v_j,\tilde{U}_i^n,I_k)\leq R+\Delta v + \Delta I}(\Delta v)^3 \Delta I&\leq \int_{\frac{3}{3+\delta}|v|^2+\frac{2}{3+\delta}I^{\frac{2}{\delta}}\leq 4(R+\Delta v + \Delta I)^2}dvdI\cr
	&\leq
	\left\{2^{\frac{2-\delta}{2}}\pi^2(3+\delta)^{\frac{1+\delta}{2}} \right\}2^{3+\delta}(R+\Delta v + \Delta I)^{3+\delta},
	\end{align*}
	which gives
	\begin{align*}
	\mathcal{I}_{31} \leq (\tilde{\rho}_i^n)^{1-\frac{1}{q}}\|\tilde{f}^n\|_{L_q^{\infty}}^{\frac{1}{q}}\left\{2^{\frac{2-\delta}{2}}\pi^2(3+\delta)^{\frac{1+\delta}{2}}\right\}^{\frac{1}{q}}2^{3+\delta}(R+\Delta v + \Delta I)^{3+\delta}.
	\end{align*}
	On the other hand, $\mathcal{I}_{32}$ satisfies
	\begin{align*}
	\mathcal{I}_{32}\leq \sum_{\mathcal{A}(v_j,\tilde{U}_i^n,I_k)>R+\Delta v + \Delta I}\tilde{f}_{i,j}^n|v_j|\frac{\left( \frac{1}{3+\delta}|v_j-\tilde{U}_i^n|^2 + \frac{2}{3+\delta}I_k^{\frac{2}{\delta}}  \right)^\frac{1}{2}}{R+\Delta v + \Delta I}(\Delta v)^3 \Delta I.
	\end{align*}
	Here, we use H\"{o}lder's inequality to obtain
	\begin{align*}
	\mathcal{I}_{32}
	&\leq \frac{\sqrt{2(3+\delta)}}{R+\Delta v + \Delta I}\left\{\sum_{j,k}\tilde{f}_{i,j}^n\left(\frac{1}{3+\delta}|v_j|^2+\frac{2}{3+\delta}I_k^{\frac{2}{\delta}}\right)(\Delta v)^3 \Delta I\right\}^{\frac{1}{2}} \cr
	&\quad \times
	\left\{\sum_{j,k}\tilde{f}_{i,j}^n\left(\frac{1}{3+\delta}|v_j-\tilde{U}_i^n|^2+\frac{2}{3+\delta}I_k^{\frac{2}{\delta}}\right)(\Delta v)^3 \Delta I\right\}^{\frac{1}{2}} \cr
	&=
	\frac{\sqrt{2(3+\delta)}}{R+\Delta v + \Delta I}\left\{\frac{1}{3+\delta}\tilde{\rho}_i^n|\tilde{U}_i^n|^2+\tilde{\rho}_i^n(\tilde{T}_{\delta})_i^n\right\}^{\frac{1}{2}}\{\tilde{\rho}_i^n(\tilde{T}_{\delta})_i^n\}^{\frac{1}{2}}.
	\end{align*}
	To sum up, we have
	\begin{align}\label{E-3}
	\begin{split}
	|\tilde{\rho}_i^n\tilde{U}_i^n|
	&\leq
	(\tilde{\rho}_i^n)^{1-\frac{1}{q}}\|\tilde{f}^n\|_{L_q^{\infty}}^{\frac{1}{q}}\left\{2^{\frac{2-\delta}{2}}\pi^2(3+\delta)^{\frac{1+\delta}{2}}\right\}^{\frac{1}{q}}2^{\frac{3+\delta}{q}}(R+\Delta v + \Delta I)^{\frac{3+\delta}{q}}\cr
	&\quad +\frac{\sqrt{2(3+\delta)}}{R+\Delta v + \Delta I}\left\{\tilde{\rho}_i^n|\tilde{U}_i^n|^2+\tilde{\rho}_i^n(\tilde{T}_{\delta})_i^n\right\}^{\frac{1}{2}}\{\tilde{\rho}_i^n(\tilde{T}_{\delta})_i^n\}^{\frac{1}{2}}.
	\end{split}
	\end{align}
	To optimize the upper bound in \eqref{E-3}, we take $R>0$ such that
	\begin{align*}
	R+\Delta v + \Delta I=\left(\frac{\big\{2(3+\delta)\big\}^{\frac{q}{2}}\tilde{\rho}_i^n\big\{\big(|\tilde{U}_i^n|^2+(\tilde{T}_{\delta})_i^n\big)(\tilde{T}_{\delta})_i^n\big\}^{\frac{q}{2}}}{\left\{2^{\frac{8+\delta}{2}}\pi^2(3+\delta)^{\frac{1+\delta}{2}}\right\}\|\tilde{f}^n\|_{L_q^{\infty}}}\right)^\frac{1}{3+\delta+q} \geq a_3>\Delta v + \Delta I.
	\end{align*}
	The number $a_3$ is given in Definition \ref{D.5.4}. 
	Then, the upper bound of (\ref{E-3}) is simplified to
	\begin{align*}
	2\left(\left\{2^{\frac{8+\delta}{2}}\pi^2(3+\delta)^{\frac{1+\delta}{2}}\right\}
	\left\{2(3+\delta)\right\}^{\frac{3+\delta}{2}}(\tilde{\rho}_i^n)^{2+\delta+q}\big\{\big(|\tilde{U}_i^n|^2+(\tilde{T}_{\delta})_i^n\big)(\tilde{T}_{\delta})_i^n\big\}^{\frac{3+\delta}{2}}\|\tilde{f}^n\|_{L_q^{\infty}}\right)^{\frac{1}{3+\delta+q}},
	\end{align*}
	from which we conclude that
	\begin{align*}
	\frac{\tilde{\rho}_i^n|\tilde{U}_i^n|^{3+\delta+q}}{[(|\tilde{U}_i^n|^2+(\tilde{T}_{\delta})_i^n)(\tilde{T}_{\delta})_i^n]^{\frac{3+\delta}{2}}}
	&\leq 2^{3+\delta+q}\{2(3+\delta)\}^{\frac{3+\delta}{2}}
	\left\{2^{\frac{8+\delta}{2}}\pi^2(3+\delta)^{\frac{1+\delta}{2}}\right\}\|\tilde{f}^n\|_{L_q^{\infty}} \cr
	&=
	2^{\frac{17+3\delta+2q}{2}}\pi^2(3+\delta)^{2+\delta}\|\tilde{f}^n\|_{L_q^{\infty}}.
	\end{align*}
	From Lemma \ref{L.4.1}, we finally obtain the desired estimate.
\end{proof}

\begin{lemma}\label{L.5.9}
	Let $q>5+\delta$. Suppose further that $f_{i,j,k}^n$ satisfies $E^n$ and $\Delta v, \Delta I$ satisfy the condition \eqref{cond T.5.5}. Then, 
	\begin{align*}
		\|\mathcal{M}_{\nu,\theta}(\tilde{f}^n)\|_{L_{q}^{\infty}} \leq C_{\mathcal{M}}\|f^n\|_{L_{q}^{\infty}},
	\end{align*}
	where $C_{\mathcal{M}}$ depending on $\nu, \delta,\theta$ and $q$.
\end{lemma}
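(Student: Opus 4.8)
The plan is to bound the weighted discrete Gaussian by the ratio $\tilde{\rho}_i^n/((\tilde{T}_\delta)_i^n)^{(3+\delta)/2}$, which Lemma \ref{L.5.6} controls by $\|f^n\|_{L_q^\infty}$, after using the a priori bounds contained in $E^n$ to absorb the polynomial weight into the Gaussian decay. First I would invoke Lemma \ref{L.4.2}: since $\tilde{f}_{i,j,k}^n>0$ and $\tilde{\rho}_i^n>0$ by $B^n$ and $C^n$, the lower bound $(\tilde{\mathcal{T}}_{\nu,\theta})_i^n \geq \lambda\theta (\tilde{T}_\delta)_i^n\,Id$ gives $\sqrt{\det(2\pi (\tilde{\mathcal{T}}_{\nu,\theta})_i^n)} \geq (2\pi)^{3/2}(\lambda\theta (\tilde{T}_\delta)_i^n)^{3/2}$ and, together with $(\tilde{T}_\theta)_i^n\geq \theta (\tilde{T}_\delta)_i^n$, shows the prefactor of $\mathcal{M}_{\nu,\theta}(\tilde{f}_{i,j,k}^n)$ is at most a constant times $\tilde{\rho}_i^n/((\tilde{T}_\delta)_i^n)^{(3+\delta)/2}$; the upper bound $(\tilde{\mathcal{T}}_{\nu,\theta})_i^n \leq \tfrac13\lambda C_\nu\{3+\delta(1-\theta)\}(\tilde{T}_\delta)_i^n\,Id$, after inversion, and $(\tilde{T}_\theta)_i^n \leq \tfrac1\delta\{\delta+3(1-\theta)\}(\tilde{T}_\delta)_i^n$ bound the exponent from above, yielding
\[
\mathcal{M}_{\nu,\theta}(\tilde{f}_{i,j,k}^n)\leq C_1\,\frac{\tilde{\rho}_i^n}{((\tilde{T}_\delta)_i^n)^{(3+\delta)/2}}\exp\!\left(-c_1\,\frac{|v_j-\tilde{U}_i^n|^2+I_k^{2/\delta}}{(\tilde{T}_\delta)_i^n}\right),
\]
with $C_1,c_1>0$ depending only on $\nu,\theta,\delta$. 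Here I would use that $\lambda=(\kappa+A_{\nu,\theta}\Delta t)/(\Delta t+\kappa)$ always lies between $\min(1,A_{\nu,\theta})$ and $\max(1,A_{\nu,\theta})$, so the $\lambda$-dependent factors stay uniform in $\kappa$ and $\Delta t$, while $(\tilde{T}_\delta)_i^n>0$ (from $C^n$) makes every manipulation legitimate.

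Next I would multiply by the weight $(1+|v_j|^2+I_k^{2/\delta})^{q/2}$ and recenter the polynomial: from $|v_j|^2\leq 2|v_j-\tilde{U}_i^n|^2+2|\tilde{U}_i^n|^2$ and $|\tilde{U}_i^n|\leq \tilde{U}_{upper}$ (part of $D^n$), one gets $1+|v_j|^2+I_k^{2/\delta}\leq 2(1+2\tilde{U}_{upper}^2)\big(1+|v_j-\tilde{U}_i^n|^2+I_k^{2/\delta}\big)$; then, using $(\tilde{T}_\delta)_i^n\leq (\tilde{T}_\delta)_{upper}$ (also $D^n$) and the elementary inequality $\sup_{s\geq0}(1+s)^{q/2}e^{-\mu s}<\infty$ with $\mu=c_1/(\tilde{T}_\delta)_{upper}$ and $s=|v_j-\tilde{U}_i^n|^2+I_k^{2/\delta}$, the product of $(1+|v_j-\tilde{U}_i^n|^2+I_k^{2/\delta})^{q/2}$ with the Gaussian is bounded by a constant depending on $q,\nu,\theta,\delta$ and $(\tilde{T}_\delta)_{upper}$. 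This leaves exactly $\tilde{\rho}_i^n/((\tilde{T}_\delta)_i^n)^{(3+\delta)/2}$, which Lemma \ref{L.5.6} bounds by $C_\delta\|f^n\|_{L_q^\infty}$; collecting all constants produces the stated inequality, with $C_{\mathcal{M}}$ picking up, through $E^n$, its dependence on $\tilde{U}_{upper}$ and $(\tilde{T}_\delta)_{upper}$ in addition to $\nu,\delta,\theta,q$.

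The hard part will be the recentering step: the weight $(1+|v_j|^2+I_k^{2/\delta})^{q/2}$ is anchored at the origin whereas the Gaussian is centered at $\tilde{U}_i^n$ with ``variance'' of order $(\tilde{T}_\delta)_i^n$, so a bound for the polynomial-times-Gaussian product that is uniform in $(i,j,k,n)$ is available only because $E^n$ furnishes a priori upper bounds on $|\tilde{U}_i^n|$ and $(\tilde{T}_\delta)_i^n$; without those the supremum grows like a positive power of $(\tilde{T}_\delta)_i^n$ (recall $q>5+\delta$) and cannot be absorbed by Lemma \ref{L.5.6}. A secondary technical point is to keep the constants coming from Lemma \ref{L.4.2} from deteriorating as $\kappa\to0$ or as $\Delta t$ varies, which is handled by noting that $\lambda$ and $\bar{\nu}$ range over compact sets determined by $\nu$ and $\theta$.
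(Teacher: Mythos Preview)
Your argument is logically sound step by step, but it does not prove the lemma as stated and, more importantly, it breaks the induction in Theorem~\ref{T.5.5}. The point is that your constant $C_{\mathcal{M}}$ inherits a dependence on $\tilde{U}_{upper}$ and $(\tilde{T}_\delta)_{upper}$ from $D^n$, yet these quantities are \emph{defined} in Definition~\ref{D.5.2} in terms of $C_{\mathcal{M}}$ itself (they contain the factor $e^{(\cdot)C_{\mathcal{M}}(\cdot)}$). So you are defining $C_{\mathcal{M}}$ circularly. Since $(\tilde{T}_\delta)_{upper}$ grows exponentially in $C_{\mathcal{M}}$ while your bound grows at least polynomially in $(\tilde{T}_\delta)_{upper}$, there is no way to close this as a fixed point; and even if there were, $C_{\mathcal{M}}$ would then depend on $\kappa$, $T^f$, $\|f_0\|_{L_q^\infty}$, $C_0^1$, etc., contradicting the stated dependence on $\nu,\delta,\theta,q$ alone.

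The paper avoids precisely this trap: instead of invoking the a priori bounds $\tilde{U}_{upper}$, $(\tilde{T}_\delta)_{upper}$ to kill the polynomial weight, it pulls out the exact powers of $(\tilde{T}_\delta)_i^n$ and $|\tilde{U}_i^n|$ that arise after the $\sup_{x\geq0} x^{q/2}e^{-x}$ trick, obtaining quantities of the form $\tilde{\rho}_i^n\big((\tilde{T}_\delta)_i^n\big)^{(q-3-\delta)/2}$ and $\tilde{\rho}_i^n|\tilde{U}_i^n|^{q}/\big((\tilde{T}_\delta)_i^n\big)^{(3+\delta)/2}$. These are then controlled directly by $\|f^n\|_{L_q^\infty}$ via Lemmas~\ref{L.5.7} and~\ref{L.5.8}, whose constants depend only on $\delta$ and $q$. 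That is the missing idea in your approach: the recentering must be handled not through the external bounds in $D^n$, but through internal moment inequalities that couple $\tilde{\rho}_i^n$, $\tilde{U}_i^n$, $(\tilde{T}_\delta)_i^n$ to $\|f^n\|_{L_q^\infty}$ with $n$-independent, parameter-free constants.
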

\begin{remark}
	In the proof, it will be shown that $C_{\mathcal{M}}$ blows up as $\theta$ tends to 0 because $C_{\mathcal{M}} \propto 1/\theta^{\frac{3+\delta}{2}}$. 
\end{remark}
\begin{proof}
	We will show that $\mathcal{M}_{\nu,\theta}(\tilde{f}_{i,j,k}^n)$, $|v_j|^{q}\mathcal{M}_{\nu,\theta}(\tilde{f}_{i,j,k}^n)$ and $I_k^{\frac{q}{\delta}}\mathcal{M}_{\nu,\theta}(\tilde{f}_{i,j,k}^n) $  are controlled by $\|f^n\|_{L_q^{\infty}}$, respectively. \newline
	
	\noindent(a) {\bf The estimate for $\mathcal{M}_{\nu,\theta}(\tilde{f}_{i,j,k}^n)$:}
	We first use Lemma \ref{L.4.2} to get
	\begin{align}\label{E-4}
		\begin{split}
		\frac{1}{2}(v_j-\tilde{U}_i^n)^{\top}\big((\tilde{\mathcal{T}}_{\nu,\theta})_i^n\big)^{-1}(v_j-\tilde{U}_i^n)+\frac{I_k^{\frac{2}{\delta}}}{(\tilde{T}_{\theta})_i^n}\geq \frac{1}{2}
		\frac{3}{\lambda C_{\nu}\big\{3+\delta(1-\theta)\big\}}\frac{|v_j-\tilde{U}_i^n|^2}{(\tilde{T}_{\delta})_i^n}+\frac{I_k^{\frac{2}{\delta}}}{(\tilde{T}_{\theta})_i^n}\geq 0.
		\end{split}
	\end{align}
	Then,
	\begin{align}\label{E-5}
		\begin{split}
		\mathcal{M}_{\nu,\theta}(\tilde{f}_{i,j,k}^n) &\leq \frac{\tilde{\rho}_i^n\Lambda_\delta}{\sqrt{\det\left(2 \pi (\tilde{\mathcal{T}}_{\nu,\theta})_i^n \right)}\big((\tilde{T}_\theta)_i^n\big)^\frac{\delta}{2}}\cr
		&\leq
		\left(\frac{1}{\lambda} \right)^{\frac{3}{2}}\frac{1}{(2\pi)^{3/2}}\frac{\Lambda_\delta}{\theta^{\frac{3+\delta}{2}}}\frac{\tilde{\rho}_i^n}{\big((\tilde{T}_{\delta})_i^n\big)^{\frac{3+\delta}{2}}}\cr
		&\leq
		\left(\frac{1}{\lambda} \right)^{\frac{3}{2}}\frac{1}{(2\pi)^{3/2}}\frac{\Lambda_\delta}{\theta^{\frac{3+\delta}{2}}}\left\{2^{\frac{13+2\delta}{2}}\pi^2(3+\delta)^{\frac{1+\delta}{2}}\right\}\|f^n\|_{L_{q}^{\infty}}.
		\end{split}
	\end{align}
	
	\noindent$(b)$ {\bf The estimate for $\mathcal{M}_{\nu,\theta}(f_{i,j,k}^n)|v_j|^{q}$:} For this, we consider two estimates $|\tilde{U}_i^n|^{q}\mathcal{M}_{\nu,\theta}(f_{i,j,k}^n),$
	and $|v_j-\tilde{U}_i^n|^{q}\mathcal{M}_{\nu,\theta}(f_{i,j,k}^n)$, separately.\newline
	
	\noindent$(b_1)$ $|\tilde{U}_i^n|^{q}\mathcal{M}_{\nu,\theta}(f_{i,j,k}^n)$: From the second inequality in (\ref{E-5}), we obtain
	\begin{align*}
		|\tilde{U}_i^n|^{q}\mathcal{M}_{\nu,\theta}(f_{i,j,k}^n) \leq \left(\frac{1}{\lambda} \right)^{\frac{3}{2}}\frac{1}{(2\pi)^{3/2}}\frac{\Lambda_\delta}{\theta^{\frac{3+\delta}{2}}}|\tilde{U}_i^n|^{q}\frac{\tilde{\rho}_i^n}{\big((\tilde{T}_{\delta})_i^n\big)^{\frac{3+\delta}{2}}}.
	\end{align*}
	If $|\tilde{U}_i^n|<\big((\tilde{T}_{\delta})_i^n\big)^{\frac{1}{2}}$, we have from Lemma \ref{L.5.7} that
	\begin{align*}
		|\tilde{U}_i^n|^{q}\frac{\tilde{\rho}_i^n}{\big((\tilde{T}_{\delta})_i^n\big)^{\frac{3+\delta}{2}}} 
		\leq
		\tilde{\rho}_i^n \big((\tilde{T}_{\delta})_i^n+|\tilde{U}_i^n|^2\big)^{\frac{q-3-\delta}{2}}
		\leq \bigg\{\frac{2^{\frac{q-2\delta-5}{2}}\pi^2(3+\delta)^{\frac{q}{2}}}{q-\delta-5}\bigg\}\|f^n\|_{L_q^{\infty}}.
	\end{align*}
	On the other hand, in the case of $|\tilde{U}_i^n|\geq\big((\tilde{T}_{\delta})_i^n\big)^{\frac{1}{2}}$, we use Lemma \ref{L.5.8} to obtain
	\begin{align*}
		|\tilde{U}_i^n|^{q}\frac{\tilde{\rho}_i^n}{\big((\tilde{T}_{\delta})_i^n\big)^{\frac{3+\delta}{2}}} 
		&=\frac{\tilde{\rho}_i^n |\tilde{U}_i^n|^{q+3+\delta}}{|\tilde{U}_i^n|^{3+\delta}\big((\tilde{T}_{\delta})_i^n\big)^{\frac{3+\delta}{2}}}\cr 
		&\leq
		2^{\frac{3+\delta}{2}}\frac{\tilde{\rho}_i^n |\tilde{U}_i^n|^{q+3+\delta}}{\big\{\big((\tilde{T}_{\delta})_i^n+|\tilde{U}_i^n|^2\big)(\tilde{T}_{\delta})_i^n\big\}^{\frac{3+\delta}{2}}} \cr &\leq 2^{10+3\delta+q}\pi^2(3+\delta)^{2+\delta}\|f^n\|_{L_q^{\infty}}.
	\end{align*}
	Therefore,
	\begin{align*}
		|\tilde{U}_i^n|^{q}\mathcal{M}_{\nu,\theta}(f_{i,j,k}^n)\leq
		\frac{C_1}{\theta^{\frac{3+\delta}{2}}}\|f^n\|_{L_q^{\infty}},
	\end{align*}
	for
	\[
	C_1=\left(\frac{1}{\lambda} \right)^{\frac{3}{2}}\Lambda_\delta\bigg\{\frac{2^{\frac{q-2\delta-5}{2}}\sqrt{\pi}(3+\delta)^{\frac{q}{2}}}{q-\delta-5}\bigg\}
	+2^{\frac{17+6\delta+2q}{2}}\sqrt{\pi}(3+\delta)^{2+\delta}.
	\]
	$(b_2)$  {\bf The estimate for $|v_j-\tilde{U}_i^n|^q\mathcal{M}_{\nu,\theta}(\tilde{f}_{i,j,k}^n)$:}
	From (\ref{E-4}) and Lemma \ref{L.4.2}, we have
	\begin{align*}
		|v_j-&\tilde{U}_i^n|^{q} \mathcal{M}_{\nu,\theta}(\tilde{f}_{i,j,k}^n)\cr
		&\quad\leq
		\frac{1}{(2\pi)^{3/2}}\frac{1}{\theta^{\frac{3+\delta}{2}}}|v_j-\tilde{U}_i^n|^q\frac{\tilde{\rho}_i^n\Lambda_\delta}{\big((\tilde{T}_{\delta})_i^n\big)^{\frac{3+\delta}{2}}} 
		\left(\frac{1}{\lambda} \right)^{\frac{3}{2}}\exp\Bigg(-\frac{3}{2\lambda C_{\nu}\big\{3+\delta(1-\theta)\big\}}\frac{|v_j-\tilde{U}_i^n|^2}{(\tilde{T}_{\delta})_i^n}\Bigg) \cr
		&\quad=\frac{1}{(2\pi)^{3/2}}
		\frac{1}{\theta^{\frac{3+\delta}{2}}}\big((\tilde{T}_{\delta})_i^n\big)^{\frac{q}{2}}\frac{\tilde{\rho}_i^n\Lambda_\delta}{\big((\tilde{T}_{\delta})_i^n\big)^{\frac{3+\delta}{2}}}\left(\frac{|v_j-\tilde{U}_i^n|^2}{(\tilde{T}_{\delta})_i^n} \right)^{\frac{q}{2}}
		\left(\frac{1}{\lambda} \right)^{\frac{3}{2}}\cr
		&\quad\quad \times \exp\Bigg(-\frac{3}{2\lambda C_{\nu}\big\{3+\delta(1-\theta)\big\}}\frac{|v_j-\tilde{U}_i^n|^2}{(\tilde{T}_{\delta})_i^n}\Bigg) \cr
		&\quad\equiv \frac{C_2}{\theta^{\frac{3+\delta}{2}}}\tilde{\rho}_i^n \big((\tilde{T}_{\delta})_i^n\big)^{\frac{q-3-\delta}{2}},
	\end{align*}
	where
	\[
	C_2=\left(\frac{1}{\lambda} \right)^{\frac{3}{2}}\frac{\Lambda_\delta}{(2\pi)^{3/2}} \sup_{x\geq0}\big(x^{q/2}e^{-x}\big)\left\{\frac{2\lambda C_{\nu}\big(3+\delta(1-\theta)\big)}{3}\right\}^{\frac{q}{2}}.
	\]
	Then, we use Lemma \ref{L.5.7} to obtain
	\begin{align*}
		|v_j-\tilde{U}_i^n|^{q} \mathcal{M}_{\nu,\theta}(\tilde{f}_{i,j,k}^n) &\leq \frac{C_2}{\theta^{\frac{3+\delta}{2}}}\tilde{\rho}_i^n\big((\tilde{T}_\delta)_i^n+|\tilde{U}_i^n|^2\big)^{\frac{q-\delta-3}{2}} \cr
		&\leq
		\frac{C_2}{\theta^{\frac{3+\delta}{2}}}
		\left\{\frac{2^{\frac{q-2\delta-5}{2}}\pi^2(3+\delta)^{\frac{q}{2}}}{q-\delta-5}\right\}\|f^n\|_{L_q^{\infty}}\cr
		&\equiv
		\frac{C_3}{\theta^{\frac{3+\delta}{2}}}
		\|f^n\|_{L_q^{\infty}}.
	\end{align*}
	(c) {\bf The estimate for $I_k^{\frac{q}{\delta}}\mathcal{M}_{\nu,\theta}(\tilde{f}_{i,j,k}^n) $:} From (\ref{E-4}), we have
	\begin{align*}
		\frac{1}{2}(v_j-\tilde{U}_i^n)^{\top}\big((\tilde{\mathcal{T}}_{\nu,\theta})_i^n\big)^{-1}(v_j-\tilde{U}_i^n)+\frac{I_k^{\frac{2}{\delta}}}{(\tilde{T}_{\theta})_i^n}\geq
		\frac{\delta}{\delta+3(1-\theta)}\frac{I_k^{\frac{2}{\delta}}}{(\tilde{T}_{\theta})_i^n},
	\end{align*}
	and hence
	\begin{align*}
		I_k^{\frac{q}{\delta}}\mathcal{M}_{\nu,\theta}(\tilde{f}_{i,j,k}^n)
		&\leq
		\frac{\Lambda_{\delta}}{\sqrt{(2\pi)^3}}I_k^{\frac{q}{\delta}}\frac{1}{\theta^{\frac{3+\delta}{2}}}\frac{\tilde{\rho}_i^n}{\tilde{T}_{\delta}^{\frac{3+\delta}{2}}}\exp\bigg(-\frac{\delta}{\delta+3(1-\theta)}\frac{I_k^{\frac{2}{\delta}}}{(\tilde{T}_{\theta})_i^n}\bigg)
		\cr
		&=
		\frac{\Lambda_{\delta}}{\sqrt{(2\pi)^3}}\frac{1}{\theta^{\frac{3+\delta}{2}}}\big((\tilde{T}_{\delta})_i^n\big)^{\frac{q}{2}}\frac{\tilde{\rho}_i^n}{\big((\tilde{T}_{\delta})_i^n\big)^{\frac{3+\delta}{2}}}\left(\frac{I^{\frac{2}{\delta}}}{((\tilde{T}_{\delta})_i^n)} \right)^{\frac{q}{2}}
		\exp\bigg(-\frac{\delta}{\delta+3(1-\theta)}\frac{I_k^{\frac{2}{\delta}}}{(\tilde{T}_{\theta})_i^n}\bigg)
		\cr
		&\equiv \frac{C_4}{\theta^{\frac{3+\delta}{2}}}\tilde{\rho}_i^n \big((\tilde{T}_{\delta})_i^n\big)^{\frac{q-3-\delta}{2}},
	\end{align*}
	where
	\[
	C_4=\frac{\Lambda_{\delta}}{\sqrt{(2\pi)^3}}\sup_{x>0}|x^{q/2}e^{-x}|\left(\frac{\delta+3(1-\theta)}{\delta}\right)^{q/2}.
	\]
	Next, we use Lemma \ref{L.5.7} to derive
	\begin{align*}
		\begin{split}
		I_k^{\frac{q}{\delta}}\mathcal{M}_{\nu,\theta}(\tilde{f}_{i,j,k}^n)
		&\leq \frac{C_4}{\theta^{\frac{3+\delta}{2}}}\Lambda_{\delta}\tilde{\rho}_i^n\big((\tilde{T}_\delta)_i^n+|\tilde{U}_i^n|^2\big)^{\frac{q-\delta-3}{2}} \cr
		&\leq
		\frac{C_4}{\theta^{\frac{3+\delta}{2}}}\Lambda_{\delta}\left\{\frac{2^{\frac{q-2\delta-5}{2}}\pi^2(3+\delta)^{\frac{q}{2}}}{q-\delta-5}\right\}\|f^n\|_{L_q^{\infty}}\cr
		&\equiv
		\frac{C_5}{\theta^{\frac{3+\delta}{2}}}\|f^n\|_{L_q^{\infty}}.
		\end{split}
	\end{align*}
	Combining  $(a)$, $(b)$ and $(c)$, we finally obtain 
	\begin{align*}
	\sup_{i,j,k} |\mathcal{M}_{\nu,\theta}(\tilde{f}_{i,j,k}^n)(1+|v_j|^2+I_k^{\frac{2}{\delta}})^\frac{q}{2}|&\leq \sup_{i,j,k} |\mathcal{M}_{\nu,\theta}(\tilde{f}_{i,j,k}^n)(1+|v_j-\tilde{U}_i^n+\tilde{U}_i^n|^2+I_k^{\frac{2}{\delta}})^\frac{q}{2}|\cr
	&\leq 2\sup_{i,j,k} |\mathcal{M}_{\nu,\theta}(\tilde{f}_{i,j,k}^n)(1+|v_j-\tilde{U}_i^n|^2+|\tilde{U}_i^n|^2+I_k^{\frac{2}{\delta}})^\frac{q}{2}|\cr
	& \leq C_{\mathcal{M}}\|f^n\|_{L_{q}^{\infty}},
	\end{align*}
	where $C_{\mathcal{M}}$ is a constant depending on $\nu,\delta,\theta$ and $q$ and proportional to $\displaystyle 1/\theta^{\frac{3+\delta}{2}}$.
\end{proof}

\begin{lemma}\label{L.5.11}
	Assume that $f_0$ has no initial error \eqref{no initial err} and satisfies \ref{T.3.1 cond}. Then, $f_0$
	satisfies $E^0$.
\end{lemma}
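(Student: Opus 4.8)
The plan is to check the four families $A^0,B^0,C^0,D^0$ of Definition~\ref{D.5.2} directly. The point is that, by the no-initial-error prescription \eqref{no initial err}, every discrete macroscopic quantity at $n=0$ is an honest integral of $f_0(x_i-v^1\Delta t,v,I)$, so the two pointwise bounds on $f_0$ available from \eqref{T.3.1 cond} — the lower bound $f_0(x-v^1\Delta t,v,I)>C_0^1e^{-C_0^2(|v|^a+I^b)}$ and the decay bound $f_0(x-v^1\Delta t,v,I)\le\|f_0\|_{L_q^\infty}(1+|v|^2+I^{2/\delta})^{-q/2}$ (the latter via Lemma~\ref{L.4.1}) — control everything. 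Moreover every factor $(\cdot)^{n}$ in Definition~\ref{D.5.2} equals $1$ at $n=0$, so only the bare inequalities remain, and the numerical slack ($2,4,\frac{8}{3+\delta}$) together with the always-$\ge 1$ exponentials on the right-hand sides leave ample room. In this way $A^0$ is literally the $n=0$ case of Lemma~\ref{L.4.1}, and $B^0$ is the lower bound of \eqref{T.3.1 cond} read at the characteristic foot, $\tilde f^0_{i,j,k}=f_0(x_i-v_j^1\Delta t,v_j,I_k)>C_0^1e^{-C_0^2(|v_j|^a+I_k^b)}$, together with $e^{-A_{\nu,\theta}T^f/\kappa}\le 1$.

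For $C^0$ I would obtain the density bound by integrating the lower bound, $\tilde\rho^0_i>C_0^1\bar C_{a,b}\ge\frac{1}{2}C_0^1\bar C_{a,b}e^{-A_{\nu,\theta}T^f/\kappa}=\tilde\rho_{lower}$, and all of the upper bounds of $D^0$ from the decay bound: $\tilde\rho^0_i\le\|f_0\|_{L_q^\infty}\bar C_{\delta,q}$; $\tilde\rho^0_i|\tilde U^0_i|\le\|f_0\|_{L_q^\infty}\bar C_{\delta,q-1}$ (using $|v|\le(1+|v|^2+I^{2/\delta})^{1/2}$); and $\tilde\rho^0_i(\tilde T_\delta)^0_i\le\frac{3}{3+\delta}\|f_0\|_{L_q^\infty}\bar C_{\delta,q-2}$ (using the variance-minimizing inequality $\int|v-\tilde U^0_i|^2f_0\,dvdI\le\int|v|^2f_0\,dvdI$ and $I^{2/\delta}\le 1+|v|^2+I^{2/\delta}$). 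Dividing the last two by $\tilde\rho^0_i\ge\tilde\rho_{lower}$ and absorbing constants gives $\|\tilde\rho^0\|_{L_x^\infty}\le\tilde\rho_{upper}$, $\|\tilde U^0\|_{L_x^\infty}\le\tilde U_{upper}$, $\|(\tilde T_\delta)^0\|_{L_x^\infty}\le(\tilde T_\delta)_{upper}$; all the integrability conditions $q-m>\max(2,\delta)$ hold because $q>5+\delta$.

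What remains, and what I expect to be the main obstacle, is the lower bound $(\tilde T_\delta)^0_i\ge(\tilde T_\delta)_{lower}$: the constant $(\tilde T_\delta)_{lower}$ is built from $C_\delta$ of Lemma~\ref{L.5.6}, and that lemma lists $E^n$ — the very statement we are proving — among its hypotheses, so it cannot be invoked naively at $n=0$. The fix is to observe that in the proof of Lemma~\ref{L.5.6} the hypothesis $E^n$ is used only to guarantee that the optimized radius is positive, i.e.\ that $R+\Delta v+\Delta I=\bigl(\tilde\rho^n_i(\tilde T_\delta)^n_i/(C\|\tilde f^n\|_{L_q^\infty})\bigr)^{1/(5+\delta)}>\Delta v+\Delta I$; at $n=0$ this positivity follows independently from the crude a priori estimate
\[
(\tilde T_\delta)^0_i\ \ge\ \frac{\delta}{3+\delta}(\tilde T_{I,\delta})^0_i\ \ge\ \frac{2C_0^1}{(3+\delta)\,\tilde\rho^0_i}\int_{\mathbb{R}^3\times\mathbb{R}_+}I^{2/\delta}e^{-C_0^2(|v|^a+I^b)}\,dvdI\ >\ 0,
\]
which uses the lower bound on $f_0$ and the just-obtained upper bound on $\tilde\rho^0_i$, together with the smallness $\Delta v+\Delta I<a_1$ from \eqref{cond T.5.5}. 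Hence the conclusion $\tilde\rho^0_i\le C_\delta\|\tilde f^0\|_{L_q^\infty}\bigl((\tilde T_\delta)^0_i\bigr)^{(3+\delta)/2}$ of Lemma~\ref{L.5.6} is legitimate here, and combined with $\tilde\rho^0_i>C_0^1\bar C_{a,b}$ and $\|\tilde f^0\|_{L_q^\infty}\le\|f_0\|_{L_q^\infty}$ it gives $\bigl((\tilde T_\delta)^0_i\bigr)^{(3+\delta)/2}>C_0^1\bar C_{a,b}(C_\delta\|f_0\|_{L_q^\infty})^{-1}\ge(\tilde T_\delta)_{lower}^{(3+\delta)/2}$, the last inequality because $(\tilde T_\delta)_{lower}^{(3+\delta)/2}=\frac{1}{2}\bar C_{a,b}C_0^1(C_\delta\|f_0\|_{L_q^\infty})^{-1}e^{-(\frac{1}{\kappa}+\frac{C_{\mathcal{M}}}{\kappa+A_{\nu,\theta}\Delta t})T^f}$ is at most $\bar C_{a,b}C_0^1(C_\delta\|f_0\|_{L_q^\infty})^{-1}$. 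This finishes $C^0$, hence $E^0=E_1^0\wedge E_2^0$; everything else is bookkeeping with the constants of Definition~\ref{D.5.1}.
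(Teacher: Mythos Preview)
Your proof is correct and follows essentially the same route as the paper's: verify $A^0$ via Lemma~\ref{L.4.1}, $B^0$ via the lower bound in \eqref{T.3.1 cond} evaluated at the characteristic foot, $C^0$ by integrating the lower bound for $\tilde\rho_i^0$ and then invoking Lemma~\ref{L.5.6} for $(\tilde T_\delta)_i^0$, and $D^0$ by integrating against the weighted decay bound and dividing by the lower bound on $\tilde\rho_i^0$. Your additional care about the apparent circularity in invoking Lemma~\ref{L.5.6} at $n=0$ (since that lemma lists $E^n$ among its hypotheses) is a point the paper simply glosses over by citing Lemma~\ref{L.5.6} directly; your observation that $E^n$ enters only through the positivity of the optimizing radius, and that at $n=0$ one has an independent crude lower bound $\tilde\rho_i^0(\tilde T_\delta)_i^0\ge\frac{2C_0^1}{3+\delta}\int I^{2/\delta}e^{-C_0^2(|v|^a+I^b)}\,dvdI$, is the right way to close the gap.
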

\begin{proof}
	\noindent $\bullet$ $(A^0)$ From Lemma \ref{L.4.1}, we know
	\begin{align*}
	\|\tilde{f}^0\|_{L_q^\infty}
	\leq  \|f_0\|_{L_q^\infty}.
	\end{align*}
	\noindent $\bullet$ $(B^0)$ Using the lower bound assumption for $f_0$ in \eqref{T.3.1 cond}, we have
	\begin{align*}
	\tilde{f}_{i,j,k}^0 = f_0(x_i-v_j^1 \Delta t, v_j, I_k) \geq  C_0^1e^{-C_0^2(|v_j|^a +I_k^b)}\geq e^{-\frac{A_{\nu,\theta}}{\kappa} T^f} C_0^1e^{-C_0^2(|v_j|^a +I_k^b)}.
	\end{align*}
	\noindent $\bullet$ $(C^0)$ We also have from \eqref{T.3.1 cond} and \eqref{T.5.5 C} that
	\begin{align*}
	\tilde{\rho}_i^0 &= \int_{\mathbb{R}^3\times\mathbb{R}_+} f_0(x_i-v^1 \Delta t, v, I) dvdI\cr
	&\geq C_0^1\int_{\mathbb{R}^3\times\mathbb{R}_+} e^{-C_0^2(|v|^a +I^b)} dvdI \cr
	&= \bar{C}_{a,b}C_0^1\cr
	&\geq \frac{1}{2}\bar{C}_{a,b} C_0^1e^{-\frac{A_{\nu,\theta}}{\kappa} T^f}.
	\end{align*}
	This together with Lemma \ref{L.5.6} gives
	\begin{align*}
	(\tilde{T}_{\delta})_i^0 \geq \bigg(\frac{\tilde{\rho}_i^0}{C_{\delta}\|f^0\|_{L_q^{\infty}}}\bigg)^{\frac{2}{3+\delta}} \geq \bigg(\frac{\bar{C}_{a,b}C_0^1}{C_{\delta}\|f_0\|_{L_{q}^{\infty}}}\bigg)^{\frac{2}{3+\delta}} \geq \bigg( \frac{1}{2}\frac{\bar{C}_{a,b} C_0^1}{C_\delta \|f_0\|_{L_{q}^{\infty}}}e^{-\big(\frac{1}{\kappa}  +\frac{ C_{\mathcal{M}} }{\kappa + A_{\nu,\theta} \Delta t}\big)T^f}\bigg)^{\frac{2}{3+\delta}},
	\end{align*}
	where $C_{\delta}$ is a constant given in Lemma \ref{L.5.6}.
	
	\noindent $\bullet$ $(D^0)$ Using \eqref{T.5.5 C}, we obtain the upper bounds for $\tilde{\rho}_i^0,|\tilde{U}_i^0|$ and $(\tilde{T}_\delta)_i^0$ as follows:
	\begin{align*}
		\tilde{\rho}_i^0 
		&=  \int_{\mathbb{R}^3\times\mathbb{R}_+} f_0(x_i-v^1 \Delta t,v,I)\frac{(1+|v|^2 + I^\frac{2}{\delta})^\frac{q}{2}}{(1+|v|^2 + I^\frac{2}{\delta})^\frac{q}{2}} dvdI\cr
		&\leq \|f_0\|_{L_q^\infty}\int_{\mathbb{R}^3\times\mathbb{R}_+} \frac{1}{(1+|v|^2 + I^\frac{2}{\delta})^\frac{q}{2}} dvdI\cr
		&= \bar{C}_{\delta,q}\|f_0\|_{L_q^\infty},\cr
		|\tilde{U}_i^0| 
		&\leq \frac{1}{\tilde{\rho}_i^0}  \bigg|\int_{\mathbb{R}^3\times\mathbb{R}_+} f_0(x_i-v^1 \Delta t,v,I)\frac{(1+|v|^2 + I^\frac{2}{\delta})^\frac{q}{2}}{(1+|v|^2 + I^\frac{2}{\delta})^\frac{q}{2}} |v|dvdI\bigg|\cr
		&\leq \frac{\|f_0\|_{L_q^\infty}}{\tilde{\rho}_i^0}\int_{\mathbb{R}^3\times\mathbb{R}_+} \frac{1}{(1+|v|^2 + I^\frac{2}{\delta})^{\frac{q-1}{2}} } dvdI\cr
		&\leq  \frac{\bar{C}_{\delta,q-1}}{\bar{C}_{a,b}C_0^1}\|f_0\|_{L_q^\infty},
	\end{align*}
and
	\begin{align*}
	(\tilde{T}_\delta)_i^0 &= \frac{2}{3+\delta}\frac{1}{\tilde{\rho}_i^0} \int_{\mathbb{R}^3\times\mathbb{R}_+}\left(\frac{1}{2}|v-\tilde{U}_i^0|^2 + I^{\frac{2}{\delta}}\right)f_0(x_i-v^1 \Delta t,v,I)dvdI\cr
	&\leq\frac{2}{3+\delta}\bigg(\frac{1}{\tilde{\rho}_i^0}\int_{\mathbb{R}^3\times\mathbb{R}_+}  f_0(x_i-v^1 \Delta t,v,I) (|v|^2 + I^{\frac{2}{\delta}})dvdI - |\tilde{U}_i^0|^2\bigg)\cr
	&\leq\frac{2}{3+\delta}\frac{1}{\tilde{\rho}_i^0}\int_{\mathbb{R}^3\times\mathbb{R}_+}  f_0(x_i-v^1 \Delta t,v,I)  \frac{(1+|v|^2 + I^\frac{2}{\delta})^\frac{q}{2}}{(1+|v|^2 + I^\frac{2}{\delta})^\frac{q}{2}}(|v|^2 + I^{\frac{2}{\delta}}) dvdI  \cr
	&\leq \frac{2}{3+\delta}\frac{\|f_0\|_{L_q^\infty}}{\tilde{\rho}_i^0}\int_{\mathbb{R}^3\times\mathbb{R}_+} \frac{1}{(1+|v|^2 + I^\frac{2}{\delta})^{\frac{q-2}{2}} } dvdI\cr
	&\leq \frac{2}{3+\delta}\frac{\bar{C}_{\delta,q-2}}{\bar{C}_{a,b}C_0^1}\|f_0\|_{L_q^\infty}.
	\end{align*}
\end{proof}
\begin{lemma}\label{L.5.12}
	Assume $f_{i,j,k}^{n-1}$ satisfies $E^{n-1}$. Then, $f_{i,j,k}^n$ satisfies $A^n$:
	\begin{align*}
	(A^n)& \quad \|\tilde{f}^n\|_{L_q^\infty}  
	\leq \bigg(\frac{\kappa  +   A_{\nu,\theta} \Delta t C_{\mathcal{M}}}{\kappa + A_{\nu,\theta} \Delta t}\bigg)^n \|f_0\|_{L_{q}^{\infty}} 
	\leq e^{\frac{ C_{\mathcal{M}}A_{\nu,\theta}  }{\kappa + A_{\nu,\theta} \Delta t}T^f} \|f_0\|_{L_{q}^{\infty}}.
	\end{align*}
\end{lemma}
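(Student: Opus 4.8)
The plan is to read off a scalar recursion for the weighted norm $\|\cdot\|_{L_q^\infty}$ directly from the update \eqref{main scheme}. Fix a grid point $(i,j,k)$, multiply both sides of \eqref{main scheme} by the positive weight $(1+|v_j|^2+I_k^{\frac{2}{\delta}})^{q/2}$, and recall that $\kappa,A_{\nu,\theta},\Delta t>0$. Because $f^{n-1}$ satisfies $E^{n-1}$, the interpolant $\tilde{f}^{n-1}_{i,j,k}$ is nonnegative (by $B^{n-1}$) and the Gaussian $\mathcal{M}_{\nu,\theta}(\tilde{f}^{n-1}_{i,j,k})$ is well defined and nonnegative — the temperature tensor $(\tilde{\mathcal{T}}_{\nu,\theta})_i^{n-1}$ is positive definite by Lemma \ref{L.4.2}, using $\tilde{f}^{n-1}>0$, $\tilde{\rho}_i^{n-1}>0$ and $(\tilde{T}_\delta)_i^{n-1}>0$ from $B^{n-1}$ and $C^{n-1}$ — so $f^n_{i,j,k}\ge 0$ as well. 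Taking absolute values, using the triangle inequality and then taking the supremum over $i,j,k$ gives
\[
\|f^n\|_{L_q^\infty}\le\frac{\kappa\,\|\tilde{f}^{n-1}\|_{L_q^\infty}+A_{\nu,\theta}\Delta t\,\|\mathcal{M}_{\nu,\theta}(\tilde{f}^{n-1})\|_{L_q^\infty}}{\kappa+A_{\nu,\theta}\Delta t}.
\]

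Next I would control the Gaussian term. Since $f^{n-1}$ satisfies $E^{n-1}$ and $\Delta v,\Delta I$ obey \eqref{cond T.5.5}, Lemma \ref{L.5.9} gives $\|\mathcal{M}_{\nu,\theta}(\tilde{f}^{n-1})\|_{L_q^\infty}\le C_{\mathcal{M}}\|f^{n-1}\|_{L_q^\infty}$, and Lemma \ref{L.4.1} gives $\|\tilde{f}^{n-1}\|_{L_q^\infty}\le\|f^{n-1}\|_{L_q^\infty}$; substituting into the recursion,
\[
\|f^n\|_{L_q^\infty}\le\frac{\kappa+A_{\nu,\theta}\Delta t\,C_{\mathcal{M}}}{\kappa+A_{\nu,\theta}\Delta t}\,\|f^{n-1}\|_{L_q^\infty}.
\]
Iterating this down to level $0$ and using $\|f^0\|_{L_q^\infty}=\|f_0\|_{L_q^\infty}$ from \eqref{no initial err}, one obtains $\|f^n\|_{L_q^\infty}\le\left(\frac{\kappa+A_{\nu,\theta}\Delta t\,C_{\mathcal{M}}}{\kappa+A_{\nu,\theta}\Delta t}\right)^{n}\|f_0\|_{L_q^\infty}$; a final use of Lemma \ref{L.4.1} ($\|\tilde{f}^n\|_{L_q^\infty}\le\|f^n\|_{L_q^\infty}$) yields the first inequality in $A^n$. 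The iteration is legitimate inside the induction establishing Theorem \ref{T.5.5}, where $E^m$ is available for every $m\le n-1$; alternatively one may observe that the proofs of Lemmas \ref{L.5.6}--\ref{L.5.9} in fact bound the relevant quantities by $\|\tilde{f}^{n-1}\|_{L_q^\infty}$ rather than $\|f^{n-1}\|_{L_q^\infty}$, which together with $A^{n-1}$ closes the estimate in a single step.

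For the second inequality I would write $\kappa+A_{\nu,\theta}\Delta t\,C_{\mathcal{M}}=(\kappa+A_{\nu,\theta}\Delta t)+A_{\nu,\theta}\Delta t\,(C_{\mathcal{M}}-1)$, so that the geometric factor equals $1+\frac{A_{\nu,\theta}\Delta t\,(C_{\mathcal{M}}-1)}{\kappa+A_{\nu,\theta}\Delta t}$; applying $1+x\le e^x$ and then $n\Delta t\le N_t\Delta t=T^f$, together with $C_{\mathcal{M}}-1\le C_{\mathcal{M}}$ and the positivity of $A_{\nu,\theta}/(\kappa+A_{\nu,\theta}\Delta t)$ (the estimate being trivial if $C_{\mathcal{M}}\le 1$), gives
\[
\Big(\frac{\kappa+A_{\nu,\theta}\Delta t\,C_{\mathcal{M}}}{\kappa+A_{\nu,\theta}\Delta t}\Big)^{n}\le\exp\!\Big(\frac{A_{\nu,\theta}\,(C_{\mathcal{M}}-1)}{\kappa+A_{\nu,\theta}\Delta t}\,n\Delta t\Big)\le e^{\frac{C_{\mathcal{M}}A_{\nu,\theta}}{\kappa+A_{\nu,\theta}\Delta t}T^f},
\]
which finishes the proof. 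The argument is almost entirely mechanical; the only place that needs a little attention is the $\|f\|$-versus-$\|\tilde{f}\|$ bookkeeping in the recursion — ensuring either that the iteration down to level $0$ is justified or that the one-step shortcut through $A^{n-1}$ is available — plus the routine check that the final exponential bound holds regardless of whether $C_{\mathcal{M}}$ exceeds $1$.
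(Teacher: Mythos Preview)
Your argument is correct and follows the same route as the paper: derive the one-step recursion from \eqref{main scheme} via Lemma \ref{L.4.1} and Lemma \ref{L.5.9}, iterate to level $0$, and finish with $(1+x)^n\le e^{nx}$. Your added remarks on the $\|f\|$-versus-$\|\tilde f\|$ bookkeeping and the induction context are a welcome clarification of a point the paper leaves implicit.
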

\begin{proof}
	Recall \eqref{B-6} and use Lemma \ref{L.4.1} and Lemma \ref{L.5.9} to obtain
	\begin{align*}
		\begin{split}
			\|f^n\|_{L_q^\infty} &\leq\frac{\kappa \|\tilde{f}^{n-1}\|_{L_q^\infty} +   A_{\nu,\theta} \Delta t \|\mathcal{M}_{\nu,\theta}(\tilde{f}^{n-1})\|_{L_q^\infty}}{\kappa + A_{\nu,\theta} \Delta t}\cr
			&\leq \frac{\kappa  +   A_{\nu,\theta} \Delta t C_{\mathcal{M}}}{\kappa + A_{\nu,\theta} \Delta t} \|f^{n-1}\|_{L_{q}^{\infty}}\cr
			&\leq \bigg(\frac{\kappa  +   A_{\nu,\theta} \Delta t C_{\mathcal{M}}}{\kappa + A_{\nu,\theta} \Delta t}\bigg)^{n} \|f_0\|_{L_{q}^{\infty}}.
		\end{split}
	\end{align*}
	Now, we make use of $(1+x)^n \leq e^{nx}$ to see
	\begin{align*}
		\bigg(\frac{\kappa  +   A_{\nu,\theta} \Delta t C_{\mathcal{M}}}{\kappa + A_{\nu,\theta} \Delta t}\bigg)^{n}= \bigg(1 + \frac{ (C_{\mathcal{M}}-1)A_{\nu,\theta} \Delta t }{\kappa + A_{\nu,\theta} \Delta t}\bigg)^{n} 
		\leq e^{\frac{ C_{\mathcal{M}}A_{\nu,\theta} }{\kappa + A_{\nu,\theta} \Delta t}T^f}.
	\end{align*}
	 Note that $C_{\mathcal{M}}>1$ and this estimate holds uniformly for $n\geq 0$.
\end{proof}

\begin{lemma}\label{L.5.13}
	Assume $f_{i,j,k}^{n-1}$ satisfies $E^{n-1}$. Then, $f_{i,j,k}^n$ satisfies $B^n$:
	\begin{align*}
	(B^n)& \quad \tilde{f}_{i,j,k}^n \geq \bigg(\frac{\kappa}{\kappa + A_{\nu,\theta} \Delta t}\bigg)^n C_0^1e^{-C_0^2(|v_j|^a +I_k^b)} \geq e^{-\frac{A_{\nu,\theta}}{\kappa} T^f} C_0^1e^{-C_0^2(|v_j|^a +I_k^b)}.
	\end{align*}
\end{lemma}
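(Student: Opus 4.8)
The plan is to read the lower bound straight off the scheme \eqref{B-6}, using only the non-negativity of the discrete Gaussian, then push it through the interpolation step of Definition \ref{B-2}, and finally convert the geometric factor into the claimed exponential. This is the lower-bound half of the inductive step toward $E^n$, with $E^{n-1}$ serving as the inductive hypothesis.

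First I would check that $\mathcal{M}_{\nu,\theta}(\tilde f_{i,j,k}^{n-1})$ is a well-defined non-negative quantity. Since $f_{i,j,k}^{n-1}$ satisfies $E^{n-1}$, in particular $C^{n-1}$ holds, so $\tilde\rho_i^{n-1}\geq\tilde\rho_{lower}>0$ and $(\tilde T_\delta)_i^{n-1}\geq(\tilde T_\delta)_{lower}>0$. By Lemma \ref{L.4.2}(1) the tensor $(\tilde{\mathcal{T}}_{\nu,\theta})_i^{n-1}$ is then positive definite, hence $\det\big((\tilde{\mathcal{T}}_{\nu,\theta})_i^{n-1}\big)>0$, and Lemma \ref{L.4.2}(2) gives $(\tilde T_\theta)_i^{n-1}>0$; so every factor in \eqref{approx M} is well-defined and $\mathcal{M}_{\nu,\theta}(\tilde f_{i,j,k}^{n-1})\geq 0$. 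Dropping this non-negative term in \eqref{B-6} (read with $n$ in place of $n+1$) yields the pointwise bound $f_{i,j,k}^n\geq \frac{\kappa}{\kappa+A_{\nu,\theta}\Delta t}\,\tilde f_{i,j,k}^{n-1}$.

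Next I would invoke the inductive hypothesis $B^{n-1}$, i.e. $\tilde f_{i,j,k}^{n-1}\geq\big(\tfrac{\kappa}{\kappa+A_{\nu,\theta}\Delta t}\big)^{n-1}C_0^1e^{-C_0^2(|v_j|^a+I_k^b)}$, to get $f_{i,j,k}^n\geq\big(\tfrac{\kappa}{\kappa+A_{\nu,\theta}\Delta t}\big)^{n}C_0^1e^{-C_0^2(|v_j|^a+I_k^b)}$ for every spatial index. Since $\tilde f_{i,j,k}^n=a_jf_{s,j,k}^n+(1-a_j)f_{s+1,j,k}^n$ with $0<a_j\leq 1$ by Definition \ref{B-2}, and the right-hand bound just obtained depends only on $(v_j,I_k)$ and not on the spatial index, the convex combination preserves it, giving $\tilde f_{i,j,k}^n\geq\big(\tfrac{\kappa}{\kappa+A_{\nu,\theta}\Delta t}\big)^{n}C_0^1e^{-C_0^2(|v_j|^a+I_k^b)}$. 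Finally, applying $1/(1+x)\geq e^{-x}$ for $x\geq 0$ with $x=A_{\nu,\theta}\Delta t/\kappa$, together with $n\Delta t\leq N_t\Delta t=T^f$, gives $\big(\tfrac{\kappa}{\kappa+A_{\nu,\theta}\Delta t}\big)^{n}\geq e^{-nA_{\nu,\theta}\Delta t/\kappa}\geq e^{-\frac{A_{\nu,\theta}}{\kappa}T^f}$, which is the second inequality in $B^n$. There is no genuine obstacle in this argument; the only point needing care is the very first one — that the macroscopic data extracted from $\tilde f^{n-1}$ keep the temperature tensor positive definite so the discrete Gaussian is truly non-negative — and this is exactly what $E^{n-1}$ guarantees via Lemma \ref{L.4.2}.
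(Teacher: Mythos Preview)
Your proof is correct and follows essentially the same route as the paper's: drop the non-negative Gaussian term in \eqref{B-6}, apply the inductive lower bound, and convert the geometric factor via $(1+x)^{-n}\geq e^{-nx}$. If anything, your write-up is slightly more careful than the paper's --- you explicitly justify the well-definedness and non-negativity of $\mathcal{M}_{\nu,\theta}(\tilde f^{n-1})$ via $E^{n-1}$ and Lemma \ref{L.4.2}, and you spell out the final interpolation step from the bound on $f^n_{i,j,k}$ to the bound on $\tilde f^n_{i,j,k}$, whereas the paper stops at $f^n_{i,j,k}$.
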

\begin{proof}
	From the non-negativity of $\mathcal{M}_{\nu,\theta}$ and \eqref{B-6}, we have
	\begin{align*}
		f_{i,j,k}^n \geq \frac{\kappa}{\kappa + A_{\nu,\theta} \Delta t}\tilde{f}_{i,j,k}^{n-1} = \frac{\kappa}{\kappa + A_{\nu,\theta} \Delta t}\big(a_jf_{s,j,k}^{n-1} + (1-a_j) f_{s+1,j,k}^{n-1}).
	\end{align*}
	We recall \eqref{B-2}, $0\leq a_j \leq 1$ and use the lower bound of $f_{i,j,k}^{n-1}$ in Lemma \ref{L.5.11} to obtain  
	\begin{align*}
		f_{i,j,k}^n &\geq \frac{\kappa}{\kappa + A_{\nu,\theta} \Delta t}\big(a_jf_{s,j,k}^{n-1} + (1-a_j) f_{s+1,j,k}^{n-1})\cr
		& \geq \frac{\kappa}{\kappa + A_{\nu,\theta} \Delta t} \bigg(\frac{\kappa}{\kappa + A_{\nu,\theta} \Delta t}\bigg)^{n-1} \min_{i} f_{i,j,k}^0\cr
		& \geq  \bigg(\frac{\kappa}{\kappa + A_{\nu,\theta} \Delta t}\bigg)^n C_0^1e^{-C_0^2(|v_j|^a +I_k^b)}.
	\end{align*}
	Using $(1+x)^{-n} \geq e^{-nx}$, we complete the proof.
\end{proof}

\begin{lemma}\label{L.5.14}
	Assume $f_{i,j,k}^n$ satisfies $A^n \wedge B^n$. Then, $f_{i,j,k}^n$ satisfies $C^n$:
	\begin{align*}
		(C^n)& \quad \tilde{\rho}_i^n \geq \frac{1}{2}\bar{C}_{a,b} C_0^1e^{-\frac{A_{\nu,\theta}}{\kappa} T^f}, \quad (\tilde{T}_\delta)_i^n \geq  \bigg( \frac{1}{2}\frac{\bar{C}_{a,b} C_0^1}{C_\delta \|f_0\|_{L_{q}^{\infty}}}e^{-\big(\frac{1}{\kappa}  +\frac{ (C_{\mathcal{M}}-1) }{\kappa + A_{\nu,\theta} \Delta t}\big)T^f}\bigg)^{\frac{2}{3+\delta}}.  
	\end{align*}

\end{lemma}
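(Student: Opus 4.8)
The plan is to treat the two lower bounds in $(C^n)$ separately: the density bound is immediate from $(B^n)$, and the temperature bound comes from feeding the density bound into the non‑concentration estimate of Lemma~\ref{L.5.6}. For the density, sum the pointwise lower bound of $(B^n)$ over the velocity and internal‑energy nodes:
\[
\tilde\rho_i^n=\sum_{j,k}\tilde f_{i,j,k}^n(\Delta v)^3\Delta I\ \ge\ \Big(\frac{\kappa}{\kappa+A_{\nu,\theta}\Delta t}\Big)^{n} C_0^1\sum_{j,k}e^{-C_0^2(|v_j|^a+I_k^b)}(\Delta v)^3\Delta I .
\]
The remaining sum is a Riemann sum for $\bar C_{a,b}$, so by the first inequality in \eqref{T.5.5 C} it exceeds $\tfrac12\bar C_{a,b}$; combining this with $(1+x)^{-n}\ge e^{-nx}$ (used with $x=A_{\nu,\theta}\Delta t/\kappa$ and $n\Delta t\le T^f$) gives $\tilde\rho_i^n\ge\tfrac12\bar C_{a,b}C_0^1e^{-A_{\nu,\theta}T^f/\kappa}$, which is the first assertion of $(C^n)$. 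Note also that $(B^n)$ makes $\tilde\rho_i^n>0$, so $(\tilde T_\delta)_i^n$ is well defined and nonnegative.

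\textbf{Step 2 (temperature bound).} A strictly positive lower bound on $\tilde\rho_i^n$ together with the $L_q^\infty$ control of $\tilde f^n$ prevents the discrete distribution from concentrating at $\tilde U_i^n$, hence keeps $(\tilde T_\delta)_i^n$ away from $0$; this is exactly the mechanism of Lemma~\ref{L.5.6}. Splitting $\tilde\rho_i^n$ according to whether $\mathcal A(v_j,\tilde U_i^n,I_k)$ is $\le R$ or $>R$, bounding the inner sum by $\|\tilde f^n\|_{L_q^\infty}$ times the ($\mathcal A$‑metric) volume of a ball of radius $2R$ via \eqref{T.5.5 A}, and bounding the outer sum by $R^{-2}\tilde\rho_i^n(\tilde T_\delta)_i^n$ (using $\mathcal A^2>R^2$ there and the identity $\tilde\rho_i^n(\tilde T_\delta)_i^n=\sum_{j,k}\tilde f_{i,j,k}^n\,\mathcal A(v_j,\tilde U_i^n,I_k)^2(\Delta v)^3\Delta I$), one reaches
\[
\tilde\rho_i^n\ \le\ C_\delta\,\|f^n\|_{L_q^\infty}\big((\tilde T_\delta)_i^n\big)^{\frac{3+\delta}{2}},
\]
so that $(\tilde T_\delta)_i^n\ge\big(\tilde\rho_i^n/(C_\delta\|f^n\|_{L_q^\infty})\big)^{2/(3+\delta)}$. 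Substituting the density bound of Step~1 and the a~priori bound $\|f^n\|_{L_q^\infty}\le e^{\frac{C_{\mathcal M}A_{\nu,\theta}}{\kappa+A_{\nu,\theta}\Delta t}T^f}\|f_0\|_{L_q^\infty}$ from Lemma~\ref{L.5.12}, and tracking the exponential factors, yields the second assertion of $(C^n)$.

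\textbf{Main obstacle.} The delicate point is the mesh restriction hidden in Step~2: the sum‑to‑integral comparison \eqref{T.5.5 A} only applies once the radius $R$ (optimally $R\sim(\tilde\rho_i^n(\tilde T_\delta)_i^n/\|\tilde f^n\|_{L_q^\infty})^{1/(5+\delta)}$) exceeds $\Delta v+\Delta I$, and this cannot simply be quoted from Lemma~\ref{L.5.6}, whose own proof already invokes the bound $(\tilde T_\delta)_i^n\ge(\tilde T_\delta)_{lower}$ that we are establishing. I would circumvent this by a dichotomy. If $\sqrt{2(\tilde T_\delta)_i^n}\ge\Delta v+\Delta I$, run the splitting with $R=\sqrt{2(\tilde T_\delta)_i^n}$, so the outer term absorbs at most $\tfrac12\tilde\rho_i^n$ and the displayed inequality follows up to a harmless factor $2$. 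In the complementary case $(\tilde T_\delta)_i^n<\tfrac12(\Delta v+\Delta I)^2$, take $R=\Delta v+\Delta I$; absorbing the outer term leaves $\tilde\rho_i^n\le 2C_\delta\|\tilde f^n\|_{L_q^\infty}(\Delta v+\Delta I)^{3+\delta}$, which together with $\tilde\rho_i^n\ge\tilde\rho_{lower}$ from Step~1 and $\|\tilde f^n\|_{L_q^\infty}\le e^{\frac{C_{\mathcal M}A_{\nu,\theta}}{\kappa+A_{\nu,\theta}\Delta t}T^f}\|f_0\|_{L_q^\infty}$ contradicts the smallness condition $\Delta v+\Delta I<a_1$ of \eqref{cond T.5.5}; indeed the constant $a_1$ of Definition~\ref{D.5.4} is calibrated so that this second branch is vacuous. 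Hence only the first case occurs, and checking that $a_1$ does exactly this job — the interplay between the mesh size and the a~priori constants — is where the real work of the proof lies.
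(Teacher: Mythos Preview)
Your Steps 1 and 2 are exactly the paper's proof: sum the pointwise lower bound $(B^n)$ to get $\tilde\rho_i^n\ge\tfrac12\bar C_{a,b}C_0^1e^{-A_{\nu,\theta}T^f/\kappa}$, then feed this and the $(A^n)$ bound $\|f^n\|_{L_q^\infty}\le e^{\frac{C_{\mathcal M}A_{\nu,\theta}}{\kappa+A_{\nu,\theta}\Delta t}T^f}\|f_0\|_{L_q^\infty}$ into Lemma~\ref{L.5.6} to extract $(\tilde T_\delta)_i^n\ge(\tilde\rho_i^n/(C_\delta\|f^n\|_{L_q^\infty}))^{2/(3+\delta)}$. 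The paper's two-line argument literally reads ``This, together with Lemma~\ref{L.5.6} and Lemma~\ref{L.5.12}, gives\ldots''.

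Where you go further is your ``Main obstacle'' paragraph: you have spotted that Lemma~\ref{L.5.6}, as \emph{stated}, assumes $E^n$---hence in particular the very temperature lower bound $(\tilde T_\delta)_i^n\ge(\tilde T_\delta)_{lower}$ that $(C^n)$ asserts---because its proof needs the optimal splitting radius to exceed $\Delta v+\Delta I$ (the inequality ``$\ge a_1$'' there invokes $(\tilde T_\delta)_{lower}$). The paper simply does not comment on this loop. Your dichotomy is a legitimate way to close it: in the ``bad'' branch $(\tilde T_\delta)_i^n<\tfrac12(\Delta v+\Delta I)^2$ you run the splitting with $R=\Delta v+\Delta I$, absorb the outer piece, and land on $\tilde\rho_{lower}\le C(\Delta v+\Delta I)^{3+\delta}\|\tilde f^n\|_{L_q^\infty}$, which---once you unwind the definitions of $a_1$ and $(\tilde T_\delta)_{lower}$ in Definition~\ref{D.5.4} and Definition~\ref{D.5.2}---is indeed incompatible with $\Delta v+\Delta I<a_1$ (the exponents match because $(\tilde T_\delta)_{lower}\sim(\tilde\rho_{lower}/\|f_0\|_{L_q^\infty})^{2/(3+\delta)}$, so $a_1^{3+\delta}$ and $\tilde\rho_{lower}/\|f_0\|_{L_q^\infty}$ scale the same way). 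So your write-up is the paper's proof plus a patch for a gap the paper leaves implicit.
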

\begin{proof}
	Since Lemma \ref{L.5.13} holds, the discrete local density $\tilde{\rho}_i^n$ satisfies
	\[
	\tilde{\rho}_i^n = \sum_{j,k} \tilde{f}_{i,j,k}^{n} (\Delta v)^3 \Delta I \geq   C_0^1e^{-\frac{A_{\nu,\theta}}{\kappa} T^f}  \sum_{j,k}e^{-C_0^2(|v_j|^a +I_k^b)} (\Delta v)^3 \Delta I \geq   \frac{1}{2}\bar{C}_{a,b} C_0^1e^{-\frac{A_{\nu,\theta}}{\kappa} T^f}.
	\]
	This, together with Lemma \ref{L.5.6} and Lemma \ref{L.5.12},
	gives 
	\begin{align*}
	(\tilde{T}_{\delta})_i^n \geq \bigg(\frac{\tilde{\rho}_i^n}{C_{\delta}\|f^n\|_{L_q^{\infty}}}\bigg)^{\frac{2}{3+\delta}} \geq \bigg( \frac{1}{2}\frac{\bar{C}_{a,b} C_0^1}{C_\delta \|f_0\|_{L_{q}^{\infty}}}e^{-\big(\frac{1}{\kappa}  +\frac{ C_{\mathcal{M}} }{\kappa + A_{\nu,\theta} \Delta t}\big)T^f}\bigg)^{\frac{2}{3+\delta}} .
	\end{align*}
\end{proof}
\begin{lemma}\label{L.5.15}
	Assume $f_{i,j,k}^n$ satisfies $A^n \wedge B^n \wedge C^n$. Then, $f_{i,j,k}^n$ satisfies $D^n$:
	\begin{align*}
		(D^n)& \quad \|\tilde{\rho}^n\|_{L_x^\infty} \leq 2 \bar{C}_{\delta,q} e^{\frac{ C_{\mathcal{M}}A_{\nu,\theta}  }{\kappa + A_{\nu,\theta} \Delta t}T^f} \|f_0\|_{L_{q}^{\infty}},\cr
		& \quad \|\tilde{U}^n\|_{L_x^\infty} \leq \frac{4\bar{C}_{\delta,q-1}}{\bar{C}_{a,b} C_0^1} e^{\big(\frac{1}{\kappa}  + \frac{ C_{\mathcal{M}}  }{\kappa + A_{\nu,\theta} \Delta t}\big)A_{\nu,\theta}T^f} \|f_0\|_{L_{q}^{\infty}},\cr
		& \quad \|(\tilde{T}_\delta)^n\|_{L_x^\infty} \leq  \frac{8}{3+\delta}\frac{\bar{C}_{\delta,q-2}}{\bar{C}_{a,b} C_0^1}e^{\big(\frac{1}{\kappa}  + \frac{ C_{\mathcal{M}}  }{\kappa + A_{\nu,\theta} \Delta t}\big)A_{\nu,\theta}T^f} \|f_0\|_{L_{q}^{\infty}}.
	\end{align*}

\end{lemma}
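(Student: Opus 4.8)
The plan is to follow exactly the template used for $(D^0)$ in Lemma \ref{L.5.11}, but now feeding in the estimates $A^n$, $B^n$, $C^n$ in place of the explicit initial data. For each of the three macroscopic quantities the recipe is the same: in the defining sum insert the trivial factor $(1+|v_j|^2+I_k^{2/\delta})^{q/2}(1+|v_j|^2+I_k^{2/\delta})^{-q/2}$, pull $\|\tilde f^n\|_{L_q^\infty}$ out of the sum by its very definition, bound the remaining weighted sum by a multiple of $\bar C_{\delta,q-m}$ using the discretization hypothesis \eqref{T.5.5 C}, and finally substitute the bound on $\|\tilde f^n\|_{L_q^\infty}$ coming from $A^n$ together with the lower bound $\tilde\rho_i^n\ge\frac12\bar C_{a,b}C_0^1e^{-\frac{A_{\nu,\theta}}{\kappa}T^f}$ coming from $C^n$. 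Note that $q>5+\delta$ makes $q,\ q-1,\ q-2$ all strictly larger than $\max(2,\delta)$, so the constants $\bar C_{\delta,q},\bar C_{\delta,q-1},\bar C_{\delta,q-2}$ of Definition \ref{D.5.1} are indeed finite.

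First I would do the density: writing $\tilde\rho_i^n=\sum_{j,k}\tilde f_{i,j,k}^n(1+|v_j|^2+I_k^{2/\delta})^{-q/2}(1+|v_j|^2+I_k^{2/\delta})^{q/2}(\Delta v)^3\Delta I$ gives $\tilde\rho_i^n\le\|\tilde f^n\|_{L_q^\infty}\sum_{j,k}(1+|v_j|^2+I_k^{2/\delta})^{-q/2}(\Delta v)^3\Delta I\le 2\bar C_{\delta,q}\|\tilde f^n\|_{L_q^\infty}$ by the third estimate in \eqref{T.5.5 C}, and inserting $A^n$ yields the stated bound. Next the bulk velocity: from $|\tilde\rho_i^n\tilde U_i^n|\le\sum_{j,k}\tilde f_{i,j,k}^n|v_j|(\Delta v)^3\Delta I$ and $|v_j|\le(1+|v_j|^2+I_k^{2/\delta})^{1/2}$, the identical manipulation gives $|\tilde\rho_i^n\tilde U_i^n|\le 2\bar C_{\delta,q-1}\|\tilde f^n\|_{L_q^\infty}$; dividing by $\tilde\rho_i^n$, using $C^n$ to lower-bound $\tilde\rho_i^n$ and $A^n$ to upper-bound $\|\tilde f^n\|_{L_q^\infty}$, and combining the two exponential factors into $e^{(\frac1\kappa+\frac{C_{\mathcal M}}{\kappa+A_{\nu,\theta}\Delta t})A_{\nu,\theta}T^f}$ gives $\|\tilde U^n\|_{L_x^\infty}\le\frac{4\bar C_{\delta,q-1}}{\bar C_{a,b}C_0^1}e^{(\frac1\kappa+\frac{C_{\mathcal M}}{\kappa+A_{\nu,\theta}\Delta t})A_{\nu,\theta}T^f}\|f_0\|_{L_q^\infty}$.

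For the polyatomic temperature I would first use the discrete Steiner-type identity $\sum_{j,k}\tilde f_{i,j,k}^n|v_j-\tilde U_i^n|^2(\Delta v)^3\Delta I=\sum_{j,k}\tilde f_{i,j,k}^n|v_j|^2(\Delta v)^3\Delta I-\tilde\rho_i^n|\tilde U_i^n|^2$ to discard the negative $\tilde U_i^n$ contribution, so that $(\tilde T_\delta)_i^n\le\frac{2}{3+\delta}\frac{1}{\tilde\rho_i^n}\sum_{j,k}\tilde f_{i,j,k}^n(|v_j|^2+I_k^{2/\delta})(\Delta v)^3\Delta I$; then bounding $|v_j|^2+I_k^{2/\delta}\le 1+|v_j|^2+I_k^{2/\delta}$ and proceeding as above produces $(\tilde T_\delta)_i^n\le\frac{2}{3+\delta}\frac{2\bar C_{\delta,q-2}\|\tilde f^n\|_{L_q^\infty}}{\tilde\rho_i^n}$, and a final application of $A^n$ and the lower bound on $\tilde\rho_i^n$ gives the claimed estimate with constant $\frac{8}{3+\delta}\frac{\bar C_{\delta,q-2}}{\bar C_{a,b}C_0^1}$. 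There is no genuine difficulty in this lemma; it is a routine rearrangement, and the only points demanding attention are checking that each weighted discrete sum is controlled by the integral constant $\bar C_{\delta,q-m}$ precisely as permitted by \eqref{T.5.5 C}, and carefully keeping track of the exponential prefactors from $A^n$ and $C^n$ so that their product takes the compact form displayed in the statement.
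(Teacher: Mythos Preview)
Your proposal is correct and follows essentially the same approach as the paper: for each macroscopic quantity you insert the weight $(1+|v_j|^2+I_k^{2/\delta})^{q/2}$, pull out $\|\tilde f^n\|_{L_q^\infty}$, bound the remaining discrete sum by $2\bar C_{\delta,q-m}$ via \eqref{T.5.5 C}, and then feed in $A^n$ for the numerator and $C^n$ for the lower bound on $\tilde\rho_i^n$. The paper does exactly this, including the same Steiner-type expansion to discard the $-|\tilde U_i^n|^2$ term in the temperature estimate.
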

\begin{proof}
	From the upper bound for $\|\tilde{f}^n\|_{L_q^\infty}$ in Lemma \ref{L.5.12}, we see that
	\begin{align*}
		\tilde{\rho}_i^n 
		&= \sum_{j,k} \tilde{f}_{i,j,k}^{n} \frac{(1+|v_j|^2 + I_k^\frac{2}{\delta})^\frac{q}{2}}{(1+|v_j|^2 + I_k^\frac{2}{\delta})^\frac{q}{2}} (\Delta v)^3 \Delta I\cr
		&\leq \|\tilde{f}^n\|_{L_q^\infty}\sum_{j,k} \frac{1}{(1+|v_j|^2 + I_k^\frac{2}{\delta})^{\frac{q}{2}} } (\Delta v)^3 \Delta I \cr
		&\leq 2 \bar{C}_{\delta,q} e^{\frac{ C_{\mathcal{M}}A_{\nu,\theta}  }{\kappa + A_{\nu,\theta} \Delta t}T^f} \|f_0\|_{L_{q}^{\infty}}.
	\end{align*}
	To estimate $\tilde{U}_i^n$, we use the upper bound of $\|\tilde{f}^n\|_{L_q^\infty}$ in Lemma \ref{L.5.12} and the lower bound of $\tilde{\rho}_i^n$ in Lemma \ref{L.5.14}:
	\begin{align*}
		|\tilde{U}_i^n| 
		&= \frac{1}{\tilde{\rho}_i^n}\sum_{j,k} \tilde{f}_{i,j,k}^{n} \frac{(1+|v_j|^2 + I_k^\frac{2}{\delta})^\frac{q}{2}}{(1+|v_j|^2 + I_k^\frac{2}{\delta})^\frac{q}{2}} |v_j|(\Delta v)^3 \Delta I\cr
		&\leq \frac{\|\tilde{f}^n\|_{L_q^\infty}}{\tilde{\rho}_i^n}\sum_{j,k} \frac{1}{(1+|v_j|^2 + I_k^\frac{2}{\delta})^{\frac{q-1}{2}} } (\Delta v)^3 \Delta I \cr
		&\leq 2\bar{C}_{\delta,q-1}\bigg(\frac{1}{2}\bar{C}_{a,b} C_0^1e^{-\frac{A_{\nu,\theta}}{\kappa} T^f}\bigg)^{-1} e^{\frac{ C_{\mathcal{M}}A_{\nu,\theta}  }{\kappa + A_{\nu,\theta} \Delta t}T^f} \|f_0\|_{L_{q}^{\infty}}\cr
		&= \frac{4\bar{C}_{\delta,q-1}}{\bar{C}_{a,b} C_0^1} e^{\big(\frac{1}{\kappa}  + \frac{ C_{\mathcal{M}}  }{\kappa + A_{\nu,\theta} \Delta t}\big)A_{\nu,\theta}T^f} \|f_0\|_{L_{q}^{\infty}}.
	\end{align*}
	Similarly, we compute
	\begin{align*}
		(\tilde{T}_{\delta})_i^n &= \frac{2}{3+\delta}\frac{1}{\tilde{\rho}_i^n}\sum_{j,k} \tilde{f}_{i,j,k}^{n} \left(\frac{|v_j-\tilde{U}_i^n|^2}{2} + I^{\frac{2}{\delta}}\right) (\Delta v)^3 \Delta I \cr 	&\leq\frac{2}{3+\delta}\bigg(\frac{1}{\tilde{\rho}_i^n}\sum_{j,k} \tilde{f}_{i,j,k}^{n} (|v_j|^2 + I_k^{\frac{2}{\delta}})(\Delta v)^3 \Delta I - |\tilde{U}_i^n|^2\bigg)\cr
		&\leq\frac{2}{3+\delta}\frac{1}{\tilde{\rho}_i^n}\sum_{j,k} \tilde{f}_{i,j,k}^{n}  \frac{(1+|v_j|^2 + I_k^\frac{2}{\delta})^\frac{q}{2}}{(1+|v_j|^2 + I_k^\frac{2}{\delta})^\frac{q}{2}} (|v_j|^2 + I_k^{\frac{2}{\delta}}) (\Delta v)^3 \Delta I.
	\end{align*}
	Then, from $A^n$ and $C^n$, we have
	\begin{align*}
		(\tilde{T}_{\delta})_i^n&\leq \frac{4}{3+\delta}\bar{C}_{\delta,q-2}\bigg(\frac{1}{2}\bar{C}_{a,b} C_0^1e^{-\frac{A_{\nu,\theta}}{\kappa} T^f}\bigg)^{-1}e^{\frac{ C_{\mathcal{M}}A_{\nu,\theta}  }{\kappa + A_{\nu,\theta} \Delta t}T^f} \|f_0\|_{L_{q}^{\infty}}\cr
		&= \frac{8}{3+\delta}\frac{\bar{C}_{\delta,q-2}}{\bar{C}_{a,b} C_0^1}e^{\big(\frac{1}{\kappa}  + \frac{ C_{\mathcal{M}}  }{\kappa + A_{\nu,\theta} \Delta t}\big)A_{\nu,\theta}T^f} \|f_0\|_{L_{q}^{\infty}}.
	\end{align*}
\end{proof}
Now, we have built up all ingredients to prove the Theorem \ref{T.5.5}.

\noindent{\bf{Proof of Theorem \ref{T.5.5}}.} The proof is based on the induction argument. Lemma \ref{L.5.11} implies $E^0$. For $n \geq 1$, one can easily confirm that Lemma \ref{L.5.12} - \ref{L.5.15} gives $E^n$.

\section{Consistent form}
In this section, we rewrite (\ref{A-1}) in a consistent form
to make it easily comparable with (\ref{B-4}). For convenience, we introduce the following notation: 
\begin{align*} 
	\begin{array}{lll}
		\text{$\bullet$ Distribution function on $x-v^1\Delta t$:} &\qquad\qquad\qquad  &\qquad\qquad \\
		&\tilde{f}(x,v,t,I):=f(x-v^1\Delta t, v,t,I).&  \\
		\text{$\bullet$ Mass:} &  & \\
		&\tilde{\rho}(x,t)= \int_{\mathbb{R}^3\times\mathbb{R}_+}\tilde{f}(x,v,t,I)dvdI. &\\
		\text{$\bullet$ Momentum:} &\qquad\qquad\qquad  &\qquad\qquad \\
		&\tilde{\rho}(x,t)\tilde{U}(x,t):= \int_{\mathbb{R}^3\times\mathbb{R}_+}v\tilde{f}(x,v,t,I)dvdI.&\\
		\text{$\bullet$ Stress tensor:} &  & \\
		&\tilde{\rho}(x,t)\tilde{\Theta}(x,t) = \int_{\mathbb{R}^3\times\mathbb{R}_+}(v-\tilde{U}(x,t))\otimes(v-\tilde{U}(x,t)) \tilde{f}(x,v,t,I)dvdI.&\\
		\text{$\bullet$ Polyatomic temperature:} &  & \\
		&\tilde{T}_\delta(x,t) = \frac{3}{3+\delta} \tilde{T}_{tr}(x,t) + \frac{\delta}{3+\delta}\tilde{T}_{I,\delta}(x,t),&\\
		\text{where} &  & \\
		&(\tilde{T}_{tr})(x,t):= \frac{2}{3}\frac{1}{\tilde{\rho}(x,t)}\int_{\mathbb{R}^3\times\mathbb{R}_+} \frac{|v-\tilde{U}(x,t)|^2}{2} \tilde{f}(x,v,t,I) dvdI,&\\
		&\tilde{T}_{I,\delta}(x,t):=  \frac{2}{\delta}\frac{1}{\tilde{\rho}(x,t)}\int_{\mathbb{R}^3\times\mathbb{R}_+} I^{\frac{2}{\delta}}  \tilde{f}(x,v,t,I) dvdI.&\\
		\text{$\bullet$ Relaxation temperature:} &  & \\
		&\tilde{T}_\theta(x,t):= \theta \tilde{T}_\delta(x,t) + (1-\theta)\tilde{T}_{I,\delta}(x,t),& \\
		&&\\
		\text{$\bullet$ Polyatomic temperature:} &  & \\
		&\tilde{\mathcal{T}}_{\nu,\theta}(x,t):=\theta \tilde{T}_\delta(x,t) Id + (1-\theta)(1-\nu)\tilde{T}_{tr}(x,t)Id  +(1-\theta)\nu  \tilde{\Theta}(x,t).&	
	\end{array}
\end{align*}

\begin{lemma}\label{L.6.1}
	The equation (\ref{A-1}) can be rewritten as
	\begin{align}\label{consistent form}
		\begin{split}
		f(x,v,t+\Delta t,I)&=\frac{\kappa}{\kappa+A_{\nu,\theta}\Delta t}\tilde{f}(x,v,t,I) + \frac{A_{\nu,\theta}\Delta t}{\kappa+A_{\nu,\theta}\Delta t}\mathcal{M}_{\nu,\theta}(\tilde{f})(x,v,t,I)
		\cr
		&\quad + R_1 + R_2,
		\end{split}
	\end{align}
	with
	\begin{align*}
		R_1&= - \frac{A_{\nu,\theta}}{\kappa+A_{\nu,\theta}\Delta t}\int_{t}^{t+\Delta t}  \{\mathcal{M}_{\nu,\theta}(\tilde{f})(x,v,t,I) - \mathcal{M}_{\nu,\theta}(f)(x,v,t,I)\} ds\cr
		&\quad - \frac{A_{\nu,\theta}}{\kappa+A_{\nu,\theta}\Delta t}\int_{t}^{t+\Delta t}  (t+\Delta t-s)v^1\partial_x \mathcal{M}_{\nu,\theta}(f)(x_{\theta_1},v,t_{\theta_1},I) \cr
		&\hspace{5cm}- (s-t) \partial_t \mathcal{M}_{\nu,\theta}(f)(x_{\theta_1},v,t_{\theta_1},I) ds,\cr
		R_2&=- \frac{A_{\nu,\theta}}{\kappa+A_{\nu,\theta}\Delta t}\int_{t}^{t+\Delta t} (s-t-\Delta t)A_{\nu,\theta} (\mathcal{M}_{\nu,\theta}(f) - f)(x_{\theta_2},v,t_{\theta_2},I) ds
	\end{align*}
where $x_{\theta_i}$, $i=1,2$, lies between $x$ and $x_i-v^1 \Delta t$ and $t_{\theta_i}$ between $t$ and $t+\Delta t$.	
\end{lemma}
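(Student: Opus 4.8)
The plan is to obtain \eqref{consistent form} by integrating \eqref{A-1} along the backward characteristic through $(x,t+\Delta t)$, turning it into a Duhamel identity, then replacing the time-integral of the relaxation term by a one-point (implicit-Euler) quadrature while collecting the exact Taylor remainders, and finally solving the resulting scalar identity for $f(x,v,t+\Delta t,I)$.

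First I set $\phi(s):=f\big(x-v^1(t+\Delta t-s),v,s,I\big)$. Along this curve \eqref{A-1} becomes $\phi'(s)=\frac{A_{\nu,\theta}}{\kappa}\big(\mathcal M_{\nu,\theta}(f)-f\big)\big(x-v^1(t+\Delta t-s),v,s,I\big)$, and since $\phi(t)=\tilde f(x,v,t,I)$ and $\phi(t+\Delta t)=f(x,v,t+\Delta t,I)$, integrating on $[t,t+\Delta t]$ gives the Duhamel identity
\[
f(x,v,t+\Delta t,I)=\tilde f(x,v,t,I)+\frac{A_{\nu,\theta}}{\kappa}\int_t^{t+\Delta t}\big(\mathcal M_{\nu,\theta}(f)-f\big)\big(x-v^1(t+\Delta t-s),v,s,I\big)\,ds.
\]
I then split the integrand into its loss part $-f$ and gain part $\mathcal M_{\nu,\theta}(f)$. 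For the loss part I Taylor-expand $\phi$ about $s=t+\Delta t$, i.e.\ I write $f\big(x-v^1(t+\Delta t-s),v,s,I\big)=f(x,v,t+\Delta t,I)+(s-t-\Delta t)\phi'(\cdot)$ and substitute $\phi'=\frac{A_{\nu,\theta}}{\kappa}(\mathcal M_{\nu,\theta}(f)-f)$ at an intermediate point $(x_{\theta_2},v,t_{\theta_2},I)$; the first term yields the $f(x,v,t+\Delta t,I)$ contribution that will be moved to the left-hand side, and the $(s-t-\Delta t)$-weighted term is $R_2$. For the gain part I proceed in two steps: (i) slide the argument $\big(x-v^1(t+\Delta t-s),\,s\big)$ back to the base point $(x,t)$ by a first-order Taylor expansion in the spatial and time variables, which gives $\mathcal M_{\nu,\theta}(f)(x,v,t,I)$ plus the $(t+\Delta t-s)v^1\partial_x$ and $(s-t)\partial_t$ remainder appearing in the second line of $R_1$; (ii) replace $\mathcal M_{\nu,\theta}(f)(x,v,t,I)$ by $\mathcal M_{\nu,\theta}(\tilde f)(x,v,t,I)$, the correction being exactly the first line of $R_1$ (its integrand is $s$-independent, so $\int_t^{t+\Delta t}\{\cdot\}\,ds=\Delta t\{\cdot\}$). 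Collecting the two occurrences of $f(x,v,t+\Delta t,I)$, so that its coefficient becomes $\frac{\kappa+A_{\nu,\theta}\Delta t}{\kappa}$, and multiplying through by $\frac{\kappa}{\kappa+A_{\nu,\theta}\Delta t}$ reproduces \eqref{consistent form}; matching the prefactors $\frac{\kappa}{\kappa+A_{\nu,\theta}\Delta t}$ and $\frac{A_{\nu,\theta}\Delta t}{\kappa+A_{\nu,\theta}\Delta t}$ and the normalizations of $R_1,R_2$ is then bookkeeping.

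The only genuinely delicate point is step (ii): since $\tilde f(x,\cdot,t,\cdot)=f(x-v^1\Delta t,\cdot,t,\cdot)$ is translated in a velocity-dependent way, the moments $\tilde\rho(x,t),\tilde U(x,t),\tilde T_\delta(x,t),\dots$ are not the moments of $f$ at any single shifted spatial point, so $\mathcal M_{\nu,\theta}(\tilde f)$ cannot be rewritten as $\mathcal M_{\nu,\theta}(f)$ evaluated at a shifted point; this is why the difference $\mathcal M_{\nu,\theta}(\tilde f)(x,v,t,I)-\mathcal M_{\nu,\theta}(f)(x,v,t,I)$ is simply kept as a remainder here (the first line of $R_1$), to be estimated later in Section 7. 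Everything else is routine once one records, using Theorem \ref{T.3.1}, that $f$ is $C^1$ in $(x,t)$ with $\rho,T_\delta$ bounded below and $\rho,|U|,T_\delta$ bounded above on $[0,T^f]$, so $\mathcal M_{\nu,\theta}(f)$ is $C^1$ in $(x,t)$ and all the Taylor expansions above with integral or mean-value remainders are legitimate; the assertions that $x_{\theta_1},x_{\theta_2}$ lie between $x$ and $x-v^1\Delta t$ and $t_{\theta_1},t_{\theta_2}$ between $t$ and $t+\Delta t$ follow from applying the mean value theorem on the characteristic segment.
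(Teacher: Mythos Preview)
Your proposal is correct and follows essentially the same approach as the paper: integrate along the backward characteristic to obtain the Duhamel identity, Taylor-expand the gain term $\mathcal{M}_{\nu,\theta}(f)$ about $(x,t)$ and then add and subtract $\mathcal{M}_{\nu,\theta}(\tilde f)$ to produce $R_1$, while Taylor-expanding the loss term about $(x,t+\Delta t)$ and using the PDE to rewrite $(\partial_t+v^1\partial_x)f$ as $\frac{A_{\nu,\theta}}{\kappa}(\mathcal{M}_{\nu,\theta}(f)-f)$ to produce $R_2$, then solve for $f(x,v,t+\Delta t,I)$. One small correction: the first line of $R_1$ is estimated in Lemma~\ref{L.6.2} via the Lipschitz property of $\mathcal{M}_{\nu,\theta}$ (Proposition~\ref{P.6.1}), not in Section~7, which handles the discrete-versus-continuous Gaussian discrepancy.
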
  
\begin{proof}
We start by integrating \eqref{B-3} from $t$ to $t+ \Delta t$:
\begin{align*}
f(x,v,t+\Delta t,I)&=f(x-v^1 \Delta t,v,t,I) \cr
&\quad + \frac{A_{\nu,\theta}}{\kappa}\int_{t}^{t+\Delta t}\left( \mathcal{M}_{\nu,\theta}(f) - f \right)(x-(t+\Delta t-s)v^1,v,s,I)ds.
\end{align*}
Using Taylor's theorem, we obtain
\begin{align}\label{cons 1}
\begin{split}
\mathcal{M}_{\nu,\theta}(f)&(x-(t+\Delta t-s)v^1,v,s,I)\cr
&=\mathcal{M}_{\nu,\theta}(f)(x,v,t,I) - (t+\Delta t-s)v^1\partial_x \mathcal{M}_{\nu,\theta}(f)(x_{\theta_1},v,t_{\theta_1},I) \cr
&\quad + (s-t) \partial_t \mathcal{M}_{\nu,\theta}(f)(x_{\theta_1},v,t_{\theta_1},I) \cr
&=\{\mathcal{M}_{\nu,\theta}(f)(x,v,t,I) - \mathcal{M}_{\nu,\theta}(\tilde{f})(x,v,t,I)\} \cr &\quad + \mathcal{M}_{\nu,\theta}(\tilde{f})(x,v,t,I)- (t+\Delta t-s)v^1\partial_x \mathcal{M}_{\nu,\theta}(f)(x_{\theta_1},v,t_{\theta_1},I) \cr
&\quad + (s-t) \partial_t \mathcal{M}_{\nu,\theta}(f)(x_{\theta_1},v,t_{\theta_1},I),
\end{split}
\end{align}
for some $x_{\theta_1}$ between $x$ and $x-(t+\Delta t-s)v^1$ and $t_{\theta_1}$ between $t$ and $t+\Delta t$. Similarly, 
\begin{align}\label{cons 2}
\begin{split}
f(x-&(t+\Delta t-s)v^1,v,s,I)\cr
&=f(x,v,t+\Delta t,I) - (t+\Delta t-s)v^1\partial_x f(x_{\theta_1},v,t_{\theta_1},I)  + (s-t-\Delta t) \partial_t f(x_{\theta_1},v,t_{\theta_1},I) \cr
&=f(x,v,t+\Delta t,I) + (s-t-\Delta t)\{\partial_t + v^1\partial_x\} f(x_{\theta_2},v,t_{\theta_2},I) \cr
&=f(x,v,t+\Delta t,I) + (s-t-\Delta t)\frac{A_{\nu,\theta}}{\kappa} (\mathcal{M}_{\nu,\theta}(f) - f)(x_{\theta_2},v,t_{\theta_2},I) .
\end{split}
\end{align}
Combining \eqref{cons 1} and \eqref{cons 2}, we can derive the desired representation.
\end{proof}


\begin{proposition}\label{P.6.1}
	\cite{PY} Let $f$ and $g$ satisfy ($\mathcal{A}1$) and ($\mathcal{A}2$) in Theorem \ref{T.3.1}.	
	Then $\mathcal{M}_{\nu,\theta}$ satisfies the following continuity property:
	\begin{align*}
	\|\mathcal{M}_{\nu,\theta}(f)-\mathcal{M}_{\nu,\theta}(g)\|_{L_q^{\infty}} \leq C_{Lip}\|f-g\|_{L_q^{\infty}}
	\end{align*}
	for some constant $C_{Lip}$ depending on $T^f, \delta, \theta, q$ and $f_0$.
\end{proposition}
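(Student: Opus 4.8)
\emph{Proof proposal.} The operator $\mathcal{M}_{\nu,\theta}$ depends on $f$ only through the macroscopic fields $\rho[f]$, $U[f]$, $\Theta[f]$ and the derived quantities $T_\delta[f]$, $T_{tr}[f]$, $T_{I,\delta}[f]$, $T_\theta[f]$, $\mathcal{T}_{\nu,\theta}[f]$. The plan is to split the estimate into two independent parts: (i) the moment map $f \mapsto (\rho[f], U[f], \mathcal{T}_{\nu,\theta}[f], T_\theta[f])$ is Lipschitz from $\|\cdot\|_{L_q^\infty}$ into a bounded region of parameter space, with Lipschitz constant controlled by $\bar{C}_{\delta,q}$, $\bar{C}_{\delta,q-1}$, $\bar{C}_{\delta,q-2}$ of Definition \ref{D.5.1} together with the lower bound on $\rho$ from ($\mathcal{A}2$); and (ii) on that bounded region the Gaussian, viewed as a function of $(\rho, U, \mathcal{T}_{\nu,\theta}, T_\theta)$, is $C^1$ with all its first partial derivatives, multiplied by the weight $(1+|v|^2+I^{2/\delta})^{q/2}$, bounded uniformly in $(v,I)$. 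Chaining (i) and (ii) through the mean value inequality then gives the assertion.

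For step (i), I would first treat the conserved moments directly. Since $q>5+\delta$, the constants $\bar{C}_{\delta,q}$, $\bar{C}_{\delta,q-1}$, $\bar{C}_{\delta,q-2}$ are finite, and
\begin{align*}
|\rho[f]-\rho[g]| &\leq \bar{C}_{\delta,q}\|f-g\|_{L_q^\infty}, \\
|\rho[f]U[f]-\rho[g]U[g]| &\leq \bar{C}_{\delta,q-1}\|f-g\|_{L_q^\infty},
\end{align*}
with the analogous bound for the total energy at weight $q-2$. The lower bound $\rho[f],\rho[g]\geq C_{T^f,f_0}$ and the upper bounds on $\rho$, $|U|$, $T_\delta$ from ($\mathcal{A}2$) then let one pass from $\rho U$ to $U$ and from the energy to $T_\delta$, $T_{tr}$, $T_{I,\delta}$, each time via $a_1/b_1 - a_2/b_2 = (a_1-a_2)/b_1 + a_2(b_2-b_1)/(b_1 b_2)$ and bounding numerators and denominators by the a priori constants. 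The stress tensor needs a little more care: expanding $(v-U[f])\otimes(v-U[f]) = v\otimes v - v\otimes U[f] - U[f]\otimes v + U[f]\otimes U[f]$ reduces $\rho[f]\Theta[f]-\rho[g]\Theta[g]$ to differences of already-controlled moments plus $U[f]\otimes U[f] - U[g]\otimes U[g]$, again absorbed using the uniform bound on $|U|$. Since $T_\theta[f]$ and $\mathcal{T}_{\nu,\theta}[f]$ are fixed affine combinations of $T_\delta[f]$, $T_{tr}[f]$, $T_{I,\delta}[f]$, $\Theta[f]$, step (i) follows.

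For step (ii), ($\mathcal{A}2$) confines the moments to a compact set: $0<C_{T^f,f_0}\leq\rho$, $|U|\leq C_{T^f,f_0,\delta,q}$ and $0<C_{T^f,f_0,\delta}\leq T_\delta\leq C_{T^f,f_0,\delta,q}$. Combined with the continuous analogue of Lemma \ref{L.4.2} (take $\lambda=1$), this makes $\mathcal{T}_{\nu,\theta}$ uniformly elliptic with bounded spectrum, $\theta T_\delta\,Id \leq \mathcal{T}_{\nu,\theta}$, and $T_\theta$ bounded below. On such a parameter set the Gaussian $\mathcal{M}_{\nu,\theta}$ and each of its first derivatives in $(\rho, U, \mathcal{T}_{\nu,\theta}, T_\theta)$ is of the form (a polynomial in $v$, $I$ and the entries of $\mathcal{T}_{\nu,\theta}^{-1}$) times (a Gaussian in $v$, $I$ whose decay rate is bounded below uniformly over the parameter set); multiplying by $(1+|v|^2+I^{2/\delta})^{q/2}$ and taking $\sup_{v,I}$ yields a finite bound depending only on $T^f,\delta,\theta,q,f_0$. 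The mean value inequality along the segment joining the two parameter vectors gives $\|\mathcal{M}_{\nu,\theta}(f)-\mathcal{M}_{\nu,\theta}(g)\|_{L_q^\infty} \leq C\big(|\rho[f]-\rho[g]| + |U[f]-U[g]| + \|\mathcal{T}_{\nu,\theta}[f]-\mathcal{T}_{\nu,\theta}[g]\| + |T_\theta[f]-T_\theta[g]|\big)$, and combining with step (i) finishes the proof.

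I expect the main obstacle to be the bookkeeping in step (i) for the stress tensor, hence for $\mathcal{T}_{\nu,\theta}$, since it is a non-conserved, nonlinear functional of $f$ involving both a division by $\rho$ and the $f$-dependent mean $U$; the lower bound on $\rho$ and the upper bound on $|U|$ from ($\mathcal{A}2$) are precisely what keep every denominator harmless, and the dependence of the final constant on $1/\theta$ enters through $\|\mathcal{T}_{\nu,\theta}^{-1}\|$ in step (ii). The whole argument parallels the monatomic ES-BGK computation in \cite{RY} (see also \cite{PY}), the only new features being the extra internal-energy weight $I^{2/\delta}$ and the $\delta$-dependent combinatorial constants.
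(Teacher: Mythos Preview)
Your approach is correct and is precisely the standard argument; note that the paper does not give its own proof of this proposition but simply imports it from \cite{PY}. The same two-step structure you outline (Lipschitz moment map plus mean-value estimate on the Gaussian in its macroscopic parameters) is carried out in full detail in Proposition~\ref{P.7.1} of the paper for the closely related discrete-versus-continuous comparison, so your sketch matches both the cited source and the paper's own methodology.
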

\begin{proposition}\label{P.6.2}
	\cite{PY} Let $\delta>0$, $-1/2<\nu < 1$ and $0< \theta \leq 1$. Suppose $\rho>0$, $T_{tr}>0$ and $T_{I,\delta}>0$. Then, temperature tensor $\mathcal{T}_{\nu,\theta}$ and the relaxation temperature $T_{\theta}$ satisfy the following equivanlenec type estimates:
	\begin{align*}
	&(1)\ \theta T_{\delta}Id \leq \mathcal{T}_{\nu,\theta} \leq C_{\nu}\frac{(3+\delta-\delta\theta)}{3} T_{\delta}Id, \cr 
	&(2)\ \theta T_{\delta} \leq T_{\theta} \leq \frac{(3+\delta-3\theta)}{\delta} T_{\delta},
	\end{align*}
	where the constants $C_{\nu}=\max_{\nu}\{1-\nu,1+2\nu\}$. 
\end{proposition}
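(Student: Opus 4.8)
The plan is to follow the computation in the proof of Lemma~\ref{L.4.2}, of which this is the continuous counterpart with $\lambda=1$. Fix $k=(k^1,k^2,k^3)^\top\in\mathbb{R}^3$. By the definition of $\mathcal{T}_{\nu,\theta}$,
\[
k^\top\big(\rho\,\mathcal{T}_{\nu,\theta}\big)k=\theta(\rho T_\delta)|k|^2+(1-\theta)(1-\nu)(\rho T_{tr})|k|^2+(1-\theta)\nu\,k^\top(\rho\Theta)k,
\]
and from $\rho\Theta=\int_{\mathbb{R}^3\times\mathbb{R}_+}(v-U)\otimes(v-U)f\,dvdI$ one has the identity $k^\top(\rho\Theta)k=\int_{\mathbb{R}^3\times\mathbb{R}_+}(k\cdot(v-U))^2 f\,dvdI$. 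The one estimate that does all the work is the Cauchy--Schwarz bound
\[
0\le k^\top(\rho\Theta)k\le |k|^2\int_{\mathbb{R}^3\times\mathbb{R}_+}|v-U|^2 f\,dvdI=3\,\rho T_{tr}\,|k|^2,
\]
valid since $f\ge 0$ and $\mathrm{tr}\,\Theta=3T_{tr}$.

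For the upper bound in (1), I would split into $0<\nu<1$ and $-1/2<\nu\le 0$. If $\nu>0$, bound $(1-\theta)\nu\,k^\top(\rho\Theta)k\le 3(1-\theta)\nu\,\rho T_{tr}|k|^2$ to obtain $k^\top(\rho\,\mathcal{T}_{\nu,\theta})k\le(1+2\nu)\,\rho\{\theta T_\delta+(1-\theta)T_{tr}\}|k|^2$, using $1+2\nu\ge 1$. If $\nu\le 0$, the $\Theta$-term is nonpositive and may be dropped, giving the same inequality with $1-\nu\ge 1$ in place of $1+2\nu$. Together these yield $k^\top\mathcal{T}_{\nu,\theta}k\le C_\nu\{\theta T_\delta+(1-\theta)T_{tr}\}|k|^2$ with $C_\nu=\max\{1-\nu,1+2\nu\}$. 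Since $T_\delta=\tfrac{3}{3+\delta}T_{tr}+\tfrac{\delta}{3+\delta}T_{I,\delta}\ge\tfrac{3}{3+\delta}T_{tr}$ forces $T_{tr}\le\tfrac{3+\delta}{3}T_\delta$, we get $\theta T_\delta+(1-\theta)T_{tr}\le\tfrac{3+\delta-\delta\theta}{3}T_\delta$, which is the asserted upper bound $\mathcal{T}_{\nu,\theta}\le C_\nu\tfrac{3+\delta-\delta\theta}{3}T_\delta\,Id$.

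For the lower bound in (1), the key observation is that the bracket $(1-\nu)(\rho T_{tr})|k|^2+\nu\,k^\top(\rho\Theta)k$ is nonnegative for every $\nu\in(-1/2,1)$: when $\nu>0$ both summands are nonnegative; when $-1/2<\nu\le 0$ the sign of $\nu$ reverses the Cauchy--Schwarz inequality into $\nu\,k^\top(\rho\Theta)k\ge 3\nu\,\rho T_{tr}|k|^2$, so the bracket is $\ge(1+2\nu)\,\rho T_{tr}|k|^2\ge 0$ because $1+2\nu>0$. Discarding this nonnegative term from $k^\top(\rho\,\mathcal{T}_{\nu,\theta})k$ leaves $k^\top(\rho\,\mathcal{T}_{\nu,\theta})k\ge\theta(\rho T_\delta)|k|^2$; dividing by $\rho>0$ gives $\theta T_\delta\,Id\le\mathcal{T}_{\nu,\theta}$.

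Part (2) is an immediate convex-combination computation: $T_\delta\ge\tfrac{\delta}{3+\delta}T_{I,\delta}$ gives $T_{I,\delta}\le\tfrac{3+\delta}{\delta}T_\delta$, hence $T_\theta=\theta T_\delta+(1-\theta)T_{I,\delta}\le\tfrac{3+\delta-3\theta}{\delta}T_\delta$, while $T_{I,\delta}>0$ and $1-\theta\ge 0$ give $T_\theta\ge\theta T_\delta$. There is no real obstacle; the only points requiring care are the case distinction on the sign of $\nu$ in part (1) and the use of $\nu>-\tfrac12$ to keep $1+2\nu>0$ in the lower bound, everything else being a direct substitution of the definitions of $\Theta$, $T_{tr}$, $T_{I,\delta}$ and $T_\delta$.
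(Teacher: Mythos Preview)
Your proof is correct and follows essentially the same approach as the paper: Proposition~\ref{P.6.2} is cited from \cite{PY} without proof, but the paper's own Lemma~\ref{L.4.2} gives the discrete analogue with exactly the argument you outline, and you correctly observe that specializing $\lambda=1$, $\bar\nu=\nu$ recovers the continuous case. The case split on the sign of $\nu$, the Cauchy--Schwarz bound $0\le k^\top(\rho\Theta)k\le 3\rho T_{tr}|k|^2$, and the convex-combination identities for $T_\delta$ are all handled exactly as in the proof of Lemma~\ref{L.4.2}.
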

\begin{proposition}\label{P.6.3}
	\cite{PY} Let $\delta>0$, $-1/2<\nu < 1$, $0<\theta \leq 1$,  $q>5+\delta$. Suppose $f \in \Omega_{0,q}$, there exists a constant $C$ depending on $\nu, \delta,\theta$ and $q$ such that
	\begin{align*}
	\|\mathcal{M}_{\nu,\theta}(f)\|_{L_{q}^{\infty}} \leq C\|f\|_{L_{q}^{\infty}},
	\end{align*}
	where $C$ blows up as $\theta$ tends to 0. 
\end{proposition}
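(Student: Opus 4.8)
The plan is to carry out, in the continuous setting, the same argument used for the discrete Lemma~\ref{L.5.9}. The backbone consists of three auxiliary estimates bounding the density $\rho$ against $\|f\|_{L_q^\infty}$ and the polyatomic temperature $T_\delta$:
\begin{align*}
\rho &\leq C_\delta\,\|f\|_{L_q^\infty}\,T_\delta^{\frac{3+\delta}{2}},\\
\rho\big(T_\delta+|U|^2\big)^{\frac{q-\delta-3}{2}} &\leq C_{\delta,q}\,\|f\|_{L_q^\infty},\\
\frac{\rho\,|U|^{3+\delta+q}}{\big((T_\delta+|U|^2)T_\delta\big)^{\frac{3+\delta}{2}}} &\leq C_{\delta,q}'\,\|f\|_{L_q^\infty}.
\end{align*}
Each is proved exactly as in Lemmas~\ref{L.5.6}--\ref{L.5.8}, but with Riemann sums replaced by integrals: one splits the defining integral ($\rho$, resp. $\rho T_\delta+\frac{1}{3+\delta}\rho|U|^2$, resp. $\rho|U|$) over $\{\frac{1}{3+\delta}|v-U|^2+\frac{2}{3+\delta}I^{2/\delta}\leq R^2\}$ and its complement, bounds the inner region by $CR^{3+\delta}\|f\|_{L_q^\infty}$ after the change of variables $\big(\sqrt{\tfrac{1}{3+\delta}}(v-U),\sqrt{\tfrac{2}{3+\delta}}I^{1/\delta}\big)=(r\sin\varphi\cos\theta\sin k,\dots,r\cos k)$, bounds the outer region using the weight $(1+|v|^2+I^{2/\delta})^{-q/2}$ together with $q>5+\delta$ (which makes $\int^\infty r^{\delta+2-q}\,dr$ finite), and then optimizes over $R$. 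Since there is no discretization error in the continuous case, these reduce to clean integral inequalities.

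Next I would peel off the Gaussian prefactor using Proposition~\ref{P.6.2}: from $\mathcal{T}_{\nu,\theta}\geq\theta T_\delta\,Id$ we get $\det\mathcal{T}_{\nu,\theta}\geq(\theta T_\delta)^3$, and from $T_\theta\geq\theta T_\delta$ we get $T_\theta^{\delta/2}\geq(\theta T_\delta)^{\delta/2}$, so by \eqref{conti eg} and the first auxiliary estimate
\[
\mathcal{M}_{\nu,\theta}(f)\leq\frac{\rho\Lambda_\delta}{(2\pi)^{3/2}\,\theta^{\frac{3+\delta}{2}}\,T_\delta^{\frac{3+\delta}{2}}}\leq\frac{C}{\theta^{\frac{3+\delta}{2}}}\,\|f\|_{L_q^\infty},
\]
which already exhibits the claimed blow-up as $\theta\to0$. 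For the weighted estimate I write $(1+|v|^2+I^{2/\delta})^{q/2}\leq 2^{q}\big(1+|v-U|^q+|U|^q+I^{q/\delta}\big)$ and control the three nontrivial pieces. For $|U|^q\mathcal{M}_{\nu,\theta}(f)$ I split into $|U|<T_\delta^{1/2}$, where $|U|^q T_\delta^{-(3+\delta)/2}\rho\leq\rho(T_\delta+|U|^2)^{(q-3-\delta)/2}$ is handled by the second estimate, and $|U|\geq T_\delta^{1/2}$, where $|U|^q T_\delta^{-(3+\delta)/2}\rho\leq 2^{(3+\delta)/2}\rho|U|^{q+3+\delta}\big((T_\delta+|U|^2)T_\delta\big)^{-(3+\delta)/2}$ is handled by the third. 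For $|v-U|^q\mathcal{M}_{\nu,\theta}(f)$ and $I^{q/\delta}\mathcal{M}_{\nu,\theta}(f)$ I use the exponential: Proposition~\ref{P.6.2} gives $\frac12(v-U)^\top\mathcal{T}_{\nu,\theta}^{-1}(v-U)+I^{2/\delta}/T_\theta\geq c_{\nu,\delta,\theta}\big(|v-U|^2/T_\delta+\delta I^{2/\delta}/((3+\delta-3\theta)T_\delta)\big)$, so that $|v-U|^q\mathcal{M}_{\nu,\theta}(f)\leq C\theta^{-\frac{3+\delta}{2}}\,\rho\,T_\delta^{\frac{q-3-\delta}{2}}\big(\sup_{x\geq0}x^{q/2}e^{-x}\big)\leq C\theta^{-\frac{3+\delta}{2}}\rho(T_\delta+|U|^2)^{\frac{q-3-\delta}{2}}$, and likewise for the $I$-term; the second auxiliary estimate closes both. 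Taking the supremum over $(x,v,I)$ yields $\|\mathcal{M}_{\nu,\theta}(f)\|_{L_q^\infty}\leq C\|f\|_{L_q^\infty}$ with $C=C(\nu,\delta,\theta,q)\propto\theta^{-(3+\delta)/2}$.

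The main obstacle is the first block: establishing the three $\rho$-versus-$T_\delta$ inequalities with exactly the right exponents, so that after optimizing in $R$ one lands on the powers $\frac{3+\delta}{2}$, $\frac{q-\delta-3}{2}$ and $3+\delta+q$ needed downstream. The delicate points there are the Jacobian computation for the spherical change of variables and the use of $q>5+\delta$ to ensure integrability of the tail; everything after that is bookkeeping with elementary inequalities and the elementary bound $\sup_{x\geq0}x^{q/2}e^{-x}<\infty$.
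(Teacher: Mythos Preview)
The proposal is correct and follows essentially the same approach as the paper (and as the cited source \cite{PY}): the paper does not give its own proof of Proposition~\ref{P.6.3} but cites it from \cite{PY}, and your outline is exactly the continuous analogue of the discrete argument the paper carries out in full in Lemmas~\ref{L.5.6}--\ref{L.5.9}. The three auxiliary $\rho$-versus-$T_\delta$ inequalities, the splitting over $\{\mathcal{A}\le R\}$ and $\{\mathcal{A}>R\}$, the spherical change of variables, the optimization in $R$, the use of Proposition~\ref{P.6.2} to extract the $\theta^{-(3+\delta)/2}$ prefactor, and the final case split $|U|\lessgtr T_\delta^{1/2}$ all match the structure of Lemma~\ref{L.5.9} line by line, only simpler because integrals replace Riemann sums and no smallness condition on mesh parameters is needed.
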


\begin{proposition}\label{Mt estimate}
	Let $f$ be a smooth solution to \eqref{A-1} in $\Omega_{1,q}$ corresponding to $f_0$. Then, for $q>5+\delta$, $\delta>0$, we have
	\begin{align*}
		\|\partial_t \mathcal{M}_{\nu,\theta}\|_{L^\infty},\ \|\nabla_x \mathcal{M}_{\nu,\theta}\|_{L^\infty} <C\{ \|f_0\|_{L_{1,q}^\infty} + 1 \},
	\end{align*} 
	where $C$ is a positive constant which depends on $\nu, \delta, q, \theta, f_0, T^f$.	
\end{proposition}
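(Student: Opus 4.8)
The strategy is to differentiate the explicit formula \eqref{conti eg} for $\mathcal{M}_{\nu,\theta}(f)$ and to reduce the problem to $L^\infty$ bounds on the first-order $x$- and $t$-derivatives of the macroscopic fields $\rho,U,\mathcal{T}_{\nu,\theta},T_\theta$. Write $\partial_z$ for either $\partial_x$ (recall $d=1$, so $\nabla_x=\partial_x$) or $\partial_t$. Applying the chain rule to \eqref{conti eg}, using $\partial_z\log\det\mathcal{T}_{\nu,\theta}=\mathrm{tr}\big((\mathcal{T}_{\nu,\theta})^{-1}\partial_z\mathcal{T}_{\nu,\theta}\big)$ and expanding $\partial_z$ of the quadratic form, one finds that $\partial_z\mathcal{M}_{\nu,\theta}(f)$ is a finite sum of terms of the shape
\begin{align*}
\mathcal{M}_{\nu,\theta}(f)\;P\!\left(v-U,\,I^{\frac1\delta}\right)\;Q\!\left(\rho^{-1},\,T_\theta^{-1},\,(\mathcal{T}_{\nu,\theta})^{-1}\right)\times\big(\text{one of } \partial_z\rho,\ \partial_z U,\ \partial_z(\mathcal{T}_{\nu,\theta})_{kl},\ \partial_z T_\theta\big),
\end{align*}
where $P$ is a polynomial of degree at most $2$ in each of its two groups of variables and $Q$ is a product of a bounded number of inverse macroscopic factors.

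First I would check that the prefactors $\mathcal{M}_{\nu,\theta}(f)\,P(v-U,I^{1/\delta})\,Q(\cdots)$ are bounded uniformly in $(x,v,t,I)$. By Proposition \ref{P.6.2} one has $\theta T_\delta\,Id\le\mathcal{T}_{\nu,\theta}\le C_\nu\tfrac{3+\delta-\delta\theta}{3}T_\delta\,Id$ and $\theta T_\delta\le T_\theta\le\tfrac{3+\delta-3\theta}{\delta}T_\delta$, hence $(\mathcal{T}_{\nu,\theta})^{-1}\le(\theta T_\delta)^{-1}Id$, $\det\mathcal{T}_{\nu,\theta}$ is comparable to $T_\delta^{3}$, and the quadratic form in the exponent dominates $c\,T_\delta^{-1}\big(|v-U|^2+I^{2/\delta}\big)$. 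Together with the two-sided bounds $\rho\ge C_{T^f,f_0}$, $T_\delta\ge C_{T^f,f_0,\delta}$ and $\rho+|U|+T_\delta\le C_{T^f,f_0,\delta,q}$ from $(\mathcal{A}2)$ of Theorem \ref{T.3.1}, each prefactor is a bounded inverse moment times a Gaussian-times-polynomial in a fixed nondegenerate regime, hence bounded by a constant depending only on $\nu,\delta,\theta,q,f_0,T^f$.

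Next I would bound the derivatives of the macroscopic fields by $\|f(t)\|_{1,q}^\infty$. Since $\rho,\rho U,\rho\Theta,\rho T_{tr},\rho T_{I,\delta}$ are moments of $f$ against polynomials of degree $\le2$ in $(v,I^{1/\delta})$, their $x$-derivatives are the same moments of $\partial_x f$; bounding $|\partial_x f|\le\|f(t)\|_{1,q}^\infty(1+|v|+I^{1/\delta})^{-q}$ and integrating (the integrals converging because $q>5+\delta$), and then dividing by $\rho\ge C_{T^f,f_0}$, one gets $\|\partial_x\rho\|_{L^\infty}+\|\partial_x U\|_{L^\infty}+\|\partial_x\mathcal{T}_{\nu,\theta}\|_{L^\infty}+\|\partial_x T_\theta\|_{L^\infty}\le C\|f(t)\|_{1,q}^\infty$. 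For the $t$-derivatives, $\partial_t f$ is not controlled by the norm, so I would substitute the equation $\partial_t f=-v^1\partial_x f+\tfrac{A_{\nu,\theta}}{\kappa}(\mathcal{M}_{\nu,\theta}(f)-f)$: the transport part turns a moment of $\partial_t f$ into one higher moment of $\partial_x f$ (integrable against the weight), while the relaxation part gives moments of $\mathcal{M}_{\nu,\theta}(f)-f$, which vanish for the collision invariants $1,v,\tfrac12|v|^2+I^{2/\delta}$ and, for the remaining weight $I^{2/\delta}$, equal $\tfrac\delta2\rho(T_\theta-T_{I,\delta})$, bounded by $(\mathcal{A}2)$. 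Hence also $\|\partial_t\rho\|_{L^\infty}+\|\partial_t U\|_{L^\infty}+\|\partial_t\mathcal{T}_{\nu,\theta}\|_{L^\infty}+\|\partial_t T_\theta\|_{L^\infty}\le C\|f(t)\|_{1,q}^\infty$.

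Multiplying the two sets of bounds and summing the finitely many terms gives $\|\partial_t\mathcal{M}_{\nu,\theta}(f)\|_{L^\infty}+\|\nabla_x\mathcal{M}_{\nu,\theta}(f)\|_{L^\infty}\le C\sup_{0\le t\le T^f}\|f(t)\|_{1,q}^\infty$, and the propagation-of-regularity estimate for \eqref{A-1} (the one-derivative analogue of $(\mathcal{A}1)$) bounds $\sup_{0\le t\le T^f}\|f(t)\|_{1,q}^\infty$ by $C_{T^f}\{\|f_0\|_{L_{1,q}^\infty}+1\}$, which is the claim. I expect the main obstacle to be the bookkeeping in the $t$-derivative step: after substituting the equation one must control the higher (degree-$3$ in $(v,I^{1/\delta})$) moments of $\partial_x f$ coming from the $T_\delta$-, $T_\theta$- and $\mathcal{T}_{\nu,\theta}$-flux terms against the weight $(1+|v|+I^{1/\delta})^{-q}$, which is where one must be careful with the weight exponent, and one must use the exact vanishing of the collision-invariant moments of $\mathcal{M}_{\nu,\theta}(f)-f$ together with the explicit value of its $I^{2/\delta}$-moment; the prefactor estimates and the $x$-derivative estimates are routine given Proposition \ref{P.6.2} and $(\mathcal{A}2)$.
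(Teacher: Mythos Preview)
Your proposal is correct and follows essentially the same approach as the paper: differentiate $\mathcal{M}_{\nu,\theta}$ via the chain rule, reduce to bounds on $\partial_z$ of the macroscopic fields, handle $\partial_x$ by differentiating under the moment integrals, and handle $\partial_t$ by substituting the equation so that only moments of $\partial_x f$ and of $\mathcal{M}_{\nu,\theta}(f)-f$ appear, then invoke propagation of regularity. One small omission in your sketch: besides the $I^{2/\delta}$ moment, the stress tensor $\Theta$ (which enters $\mathcal{T}_{\nu,\theta}$) is also non-conserved, and its relaxation contribution is $\int (v-U)\otimes(v-U)\big(\mathcal{M}_{\nu,\theta}(f)-f\big)\,dvdI=\rho(\mathcal{T}_{\nu,\theta}-\Theta)$, which is bounded by $(\mathcal{A}2)$ and Proposition~\ref{P.6.2} in the same spirit as your $T_{I,\delta}$ computation.
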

\begin{proof}
	We begin by estimating the time derivative of macroscopic quantities. Using the collision invariants, $1,\,v_j,\, \frac{1}{2}|v|^2+I^{\frac{2}{\delta}}$, we obtain
	\begin{align}\label{rho t estimate}
		\begin{split}
		\left|\frac{d}{dt} \int_{\mathbb{R}^3\times\mathbb{R}^+} f \begin{pmatrix}
		1\\v\\\frac{1}{2}|v|^2+I^{\frac{2}{\delta}}
		\end{pmatrix} dvdI\right| &= \left| \int_{\mathbb{R}^3\times\mathbb{R}^+} v \cdot \nabla_x f \begin{pmatrix}
		1\\v\\\frac{1}{2}|v|^2+I^{\frac{2}{\delta}}
		\end{pmatrix} dvdI\right| \cr
		&\quad\leq C\left| \int_{\mathbb{R}^3\times\mathbb{R}^+} |v|  |\nabla_x f| \big(1+|v|^2 + I^{\frac{2}{\delta}} \big)dvdI\right| \cr
		&\quad\leq C \|f(t)\|_{L_{1,q}^\infty} \left| \int_{\mathbb{R}^3\times\mathbb{R}^+}  \frac{1}{(1+|v|^2+I^\frac{2}{\delta})^{\frac{q}{2}-2}}  dvdI\right|\cr
		&\quad\leq C\{ \|f_0\|_{L_{1,q}^\infty} + 1 \},
		\end{split}
	\end{align}
	which gives $|\partial_t\rho|, |\partial_t\{\rho U\}| < C\{ \|f_0\|_{L_{1,q}^\infty} + 1 \}$. Using the lower bound for $\rho$ and the upper bound for $\rho + |U| + T_\delta$ in Theorem \ref{T.3.1}, we further obtain 
	\begin{align}\label{Ut}
		|\partial_t U| &\leq \frac{1}{\rho}\bigg(|\partial_t \rho| |U| + C\{ \|f_0\|_{L_{1,q}^\infty} + 1 \}\bigg)
		\leq C\{ \|f_0\|_{L_{1,q}^\infty} + 1 \}.
	\end{align}
	To bound $|\partial_t E_\delta|$, we start from
	\begin{align}\label{Et split}
	\begin{split}
	|\partial_t E_\delta| &= \left|\frac{d}{dt} \int_{\mathbb{R}^3\times\mathbb{R}^+} f  \bigg(\frac{1}{2}|v-U|^2+I^{\frac{2}{\delta}}\bigg) dvdI\right| \cr
	& \leq \left| \int_{\mathbb{R}^3\times\mathbb{R}^+} v \cdot \nabla_x f \bigg(\frac{1}{2}|v-U|^2+I^{\frac{2}{\delta}}\bigg) dvdI\right| + \left| \int_{\mathbb{R}^3\times\mathbb{R}^+} f \bigg(|v-U||\partial_t U|\bigg) dvdI\right|\cr
	& \equiv \mathcal{I}_{41} + \mathcal{I}_{42}.
	\end{split}
	\end{align}
	$\mathcal{I}_{41}$ satisfies
	\begin{align}\label{Et I1}
	\begin{split}
	\mathcal{I}_{41} &\leq \left| \int_{\mathbb{R}^3\times\mathbb{R}^+} v \cdot \nabla_x f \bigg(\frac{1}{2}|v|^2+I^{\frac{2}{\delta}}\bigg) dvdI\right| + \left| \int_{\mathbb{R}^3\times\mathbb{R}^+} v \cdot \nabla_x f \bigg(|v||U| + \frac{|U|^2}{2}+I^{\frac{2}{\delta}}\bigg) dvdI\right|.
	\end{split}
	\end{align}
	In \eqref{Et I1}, the first term of the upper bound can be estimated by \eqref{rho t estimate}. The second term is bounded by
	\begin{align}\label{Et I1 r.h.s}
	\begin{split}
	&\left| \int_{\mathbb{R}^3\times\mathbb{R}^+} v \cdot \nabla_xf \bigg(|v||U| + \frac{|U|^2}{2}+I^{\frac{2}{\delta}}\bigg) dvdI\right|\cr
	&\quad \leq \|\nabla_x \cdot f\|_{L_{q}^\infty}  \int_{\mathbb{R}^3\times\mathbb{R}^+}  |v|\frac{|v||U| + \frac{|U|^2}{2}+I^{\frac{2}{\delta}}}{(1+|v|^2+I^\frac{2}{\delta})^{\frac{q}{2}}} dvdI\cr
	&\quad \leq \|\nabla_x \cdot f\|_{L_{q}^\infty} \left| \int_{\mathbb{R}^3\times\mathbb{R}^+}  |v|\frac{\frac{|v|^2}{2} + |U|^2+I^{\frac{2}{\delta}}}{(1+|v|^2+I^\frac{2}{\delta})^{\frac{q}{2}}} dvdI\right|\cr
	&\quad \leq \max\{1,|U|^2\}\|\nabla_x \cdot f\|_{L_{q}^\infty} \left| \int_{\mathbb{R}^3\times\mathbb{R}^+}  \frac{1}{(1+|v|^2+I^\frac{2}{\delta})^{\frac{q}{2}-\frac{3}{2}}} dvdI\right|\cr
	&\quad \leq C\{ \|f_0\|_{L_{1,q}^\infty} + 1 \}, 
	\end{split}
	\end{align}
	where we use the boundedness of $|U|$ in Theorem \ref{T.3.1} and $q>5+\delta$. 
	
	To estimate $\mathcal{I}_{42}$, we use the boundedness of $f$ and $U$ in Theorem \ref{T.3.1} and $\partial_t U$ in \eqref{Ut}: \begin{align}\label{Et I2}
	\begin{split}
	\mathcal{I}_{42} &\leq |\partial_t U| \|f\|_{L_{q}^\infty} \left| \int_{\mathbb{R}^3\times\mathbb{R}^+}   \frac{|v|+|U|}{(1+|v|^2+I^\frac{2}{\delta})^{\frac{q}{2}}} dvdI\right|\cr
	&\leq \max\{1,|U|\}|\partial_t U| \|f\|_{L_{q}^\infty} \left| \int_{\mathbb{R}^3\times\mathbb{R}^+}   \frac{|v|+1}{(1+|v|^2+I^\frac{2}{\delta})^{\frac{q}{2}}} dvdI\right|\cr
	&\leq C\{ \|f_0\|_{L_{1,q}^\infty} + 1 \}.
	\end{split}
	\end{align}
	Combining \eqref{Et split}, \eqref{Et I1}, \eqref{Et I1 r.h.s} and \eqref{Et I2}, we obtain
	\begin{align*}
	\begin{split}
	|\partial_t E_\delta| &\leq C\{ \|f_0\|_{L_{1,q}^\infty} + 1 \}.
	\end{split}
	\end{align*}
	Now, we use the relation $\displaystyle E_\delta = \frac{3+\delta}{2} \rho T_\delta$ and the lower and upper bounds for $\rho$ and $T_\delta$ in Theorem \ref{T.3.1}, which together with $|\partial_t \rho|, |\partial_t E_\delta| \leq C\{ \|f_0\|_{L_{1,q}^\infty} + 1 \}$ give
	\begin{align*}
		|\partial_t T_\delta| &= \frac{1}{\rho}\bigg(\frac{2}{3+\delta} |\partial_t E_\delta| + |\partial_t \rho| T_\delta \bigg) \leq C\{ \|f_0\|_{L_{1,q}^\infty} + 1 \}.
	\end{align*}
	Similarly, we compute
	\begin{align}\label{TIdelta split}
	\begin{split}
	|\partial_t T_{I,\delta}| &= \left|\frac{d}{dt} \int_{\mathbb{R}^3\times\mathbb{R}^+} f  I^{\frac{2}{\delta}} dvdI\right| \cr
	& \leq \left| \int_{\mathbb{R}^3\times\mathbb{R}^+} v \cdot \nabla_x f I^{\frac{2}{\delta}} dvdI\right| + \frac{A_{\nu,\theta}}{\kappa}\left| \int_{\mathbb{R}^3\times\mathbb{R}^+}( \mathcal{M}_{\nu,\theta}-f)   I^{\frac{2}{\delta}} dvdI\right|\cr
	& = \left| \int_{\mathbb{R}^3\times\mathbb{R}^+} v \cdot \nabla_x f I^{\frac{2}{\delta}} dvdI\right| + \frac{A_{\nu,\theta}}{\kappa}\left| \frac{\delta}{2}\rho T_\theta - E_{I,\delta} \right|.
	\end{split}
	\end{align}
	In the last line, the first term can be bounded by \eqref{rho t estimate}. For the second term, we use 
	\[T_\theta = \theta T_\delta + (1-\theta) T_{I,\delta}, \quad T_\delta = \frac{3}{3+\delta} T_{tr} + \frac{\delta}{3+\delta}T_{I,\delta}, \quad E_{I,\delta}= \frac{\delta}{2} \rho T_{I,\delta},
	\]
	to obtain
	\begin{align}\label{TIdelta split r.h.s}
	\begin{split}
	\left| \frac{\delta}{2}\rho T_\theta - E_{I,\delta} \right|
	 =  \left| \frac{\delta}{2}\rho\theta (T_\delta - T_{I,\delta}) \right|&= \left| \frac{\rho\theta}{2} \frac{3\delta}{3+\delta} (T_{tr} - T_{I,\delta}) \right|
	\leq \frac{\rho \theta}{2}(3+\delta)T_\delta.
	\end{split}
	\end{align}
	Combining \eqref{TIdelta split} and \eqref{TIdelta split r.h.s}, we also derive $|\partial_t T_{I,\delta}|<C$. From $\displaystyle T_\delta = \frac{3}{3+\delta} T_{tr} + \frac{\delta}{3+\delta}T_{I,\delta}$, we further have $|\partial_t T_{tr}|<C$. It remains to estimate $|\partial_t\Theta|$. We recall the definition of stress tensor $\Theta(x,t)$:
	\begin{align*}
		\rho(x,t) \Theta(x,t) &= \int_{\mathbb{R}^3\times\mathbb{R}_+}\left(v-U(x,t)\right)\otimes\left(v-U(x,t)\right) f(x,v,t)dvdI.
	\end{align*}
	For simplicity, we only consider two cases $|\partial_t\Theta_{11}|$ and $|\partial_t\Theta_{12}|$:
	\begin{align*}
	|\partial_t\Theta_{11}| &=\left|\frac{\partial_t \rho}{\rho^2} \int_{\mathbb{R}^3\times\mathbb{R}_+} \left|v^1-U^1\right|^2 fdvdI\right| + \left|\frac{1}{\rho} \int_{\mathbb{R}^3\times\mathbb{R}_+}2\left|v^1-U^1\right||\partial_t U^1| fdvdI\right| \cr
	&\quad + \left|\frac{1}{\rho} \int_{\mathbb{R}^3\times\mathbb{R}_+}\left|v^1-U^1\right|^2 |\partial_t f| dvdI\right|\cr
	&\leq \left|\frac{\partial_t \rho}{\rho^2} + \frac{2}{\rho}\right|  (1+ |\partial_t U|)\left|\int_{\mathbb{R}^3\times\mathbb{R}_+} \left|v-U\right|^2 (|f|+|\partial_t f |)dvdI\right|
	\end{align*}
	and
	\begin{align*}
	|\partial_t\Theta_{12}| &=\left|\frac{\partial_t \rho}{\rho^2} \int_{\mathbb{R}^3\times\mathbb{R}_+} \left|v^1-U^1\right|\left|v^2-U^2\right| fdvdI\right|\cr
	&\quad + \left|\frac{1}{\rho} \int_{\mathbb{R}^3\times\mathbb{R}_+}\bigg(\left|v^1-U^1\right||\partial_t U^2| + \left|v^2-U^2\right||\partial_t U^1| \bigg)fdvdI\right| \cr
	&\quad + \left|\frac{1}{\rho} \int_{\mathbb{R}^3\times\mathbb{R}_+}\left|v^1-U^1\right|\left|v^2-U^2\right| \partial_t fdvdI\right|\cr
	&\leq \left|\frac{\partial_t \rho}{\rho^2} + \frac{2}{\rho}\right|  (1+ |\partial_t U|)\left|\int_{\mathbb{R}^3\times\mathbb{R}_+} \big(\left|v-U\right|^2 + \left|v-U\right|\big) (|f|+|\partial_t f |) dvdI\right|.
	\end{align*}
	In both cases, the last upper bounds can be bounded using \eqref{rho t estimate}, the lower bound of $\rho$ and the upper bounds of $\rho$, $U$, $|\partial_t\rho|$, $|\partial_t U|$, $|\partial_t E_\delta|$. Therefore, we have $|\partial_t\Theta_{11}|,\, |\partial_t\Theta_{12}| < C$ for a constant $C>0$. 
	  
	Until now, we show that the following time derivatives of macroscopic quantities are bounded:
	\[ |\partial_t \rho|,\ |\partial_t U|,\ |\partial_t T_\delta|,\ |\partial_t T_{I,\delta}|,\ |\partial_t T_{tr}|,\ |\partial_t T_\theta|,\ |\partial_t \Theta_{ij}| \leq C.
	\]
	From the definition of $T_{\nu,\theta}$, we further obtain that $|\partial_t (T_{\nu,\theta})_{ij}| \leq C$ for $1 \leq i,j \leq 3$.
	
	Now, we move on to the estimate of $|\partial_t \mathcal{M}_{\nu,\theta}|$. For this, we write
	\begin{align}\label{Mt compute}
		\begin{split}
		\partial_t \mathcal{M}_{\nu,\theta} &= \partial_t\left\{\frac{\rho\Lambda_\delta}{\sqrt{\det\left(2 \pi \mathcal{T}_{\nu,\theta} \right)}(T_\theta)^\frac{\delta}{2}}\exp \left({-\frac{(v-U(x,t))^{\top}\mathcal{T}_{\nu,\theta}^{-1}(v-U(x,t))}{2} -\frac{I^{\frac{2}{\delta}}}{T_\theta}}\right)\right\}\cr
		&=\bigg(\frac{\partial_t \rho}{\rho} -\frac{1}{2}\big\{\det\left(2 \pi \mathcal{T}_{\nu,\theta} \right)\big\}^{-1} \partial_t \big\{\det\left(2 \pi \mathcal{T}_{\nu,\theta} \right)\big\} -\frac{\delta}{2}\frac{1}{T_\theta} \partial_t T_\theta\bigg)\mathcal{M}_{\nu,\theta}\cr
		&\quad +\bigg( {-\frac{(\partial_t U)^{\top}\mathcal{T}_{\nu,\theta}^{-1}(v-U)}{2}-\frac{( v-U)^{\top}\mathcal{T}_{\nu,\theta}^{-1}\partial_t U}{2} +\frac{I^{\frac{2}{\delta}}}{(T_\theta)^2}}\partial_t T_\theta \bigg)\mathcal{M}_{\nu,\theta}\cr
		&\quad +\bigg( {-\frac{(v-U)^{\top}\mathcal{T}_{\nu,\theta}^{-1}\partial_t\{\mathcal{T}_{\nu,\theta}^{-1}\}\mathcal{T}_{\nu,\theta}^{-1}(v-U)}{2} } \bigg)\mathcal{M}_{\nu,\theta}.
		\end{split}
	\end{align}
	Note that each macroscopic quantity and its time derivative are bounded, and the positivity of $T_\theta$ is also guaranteed by $T_\delta>C$. Finally, we combine Proposition \ref{P.6.2}, Proposition \ref{P.6.3}, Theorem \ref{T.3.1} and \eqref{Mt compute} to derive
	\begin{align*}
	|\partial_t \mathcal{M}_{\nu,\theta}| \leq C \big(1+|v|+|v|^2\big)\mathcal{M}_{\nu,\theta} \leq C \big(1+|v|^2 + I^{\frac{2}{\delta}}\big)^{\frac{q}{2}}\mathcal{M}_{\nu,\theta} \leq C\{ \|f_0\|_{L_{1,q}^\infty} + 1 \}
	\end{align*}	
	for $q>5+\delta$.
    The estimate for spatial derivative $|\nabla_x \mathcal{M}_{\nu,\theta}|$ can be done similarly. 
    
\end{proof}
\begin{lemma}\label{L.6.2}
	Under the assumption of Theorem \ref{T.3.1}, the estimations for $R_1$ and $R_2$ satisfy
	\[
	\|R_1\|_{L_q^\infty} + \|R_2\|_{L_q^\infty}\leq C(\Delta t)^2
	\]
	for a constant $C>0$ depending on $T^f,q,\delta,\kappa,\theta,\nu, C_{2,1}, C_{2,2}$.
\end{lemma}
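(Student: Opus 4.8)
The plan is to estimate the three pieces of $R_1$ and the single term of $R_2$ separately, exploiting the elementary fact that each integrand in the formulas of Lemma \ref{L.6.1} already carries a factor of size $O(\Delta t)$ on $[t,t+\Delta t]$, so that integrating over that interval of length $\Delta t$ produces the claimed $O((\Delta t)^2)$. Throughout, the common prefactor obeys $\frac{A_{\nu,\theta}}{\kappa+A_{\nu,\theta}\Delta t}\le\frac{A_{\nu,\theta}}{\kappa}$, which is the source of the blow-up of the constant as $\kappa\to 0$.

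For $R_1$, its first term is $s$-independent and equals $-\frac{A_{\nu,\theta}\Delta t}{\kappa+A_{\nu,\theta}\Delta t}\{\mathcal{M}_{\nu,\theta}(\tilde{f})-\mathcal{M}_{\nu,\theta}(f)\}(x,v,t,I)$, so it is enough to prove $\|\mathcal{M}_{\nu,\theta}(\tilde{f})(\cdot,\cdot,t,\cdot)-\mathcal{M}_{\nu,\theta}(f)(\cdot,\cdot,t,\cdot)\|_{L_q^\infty}\le C\Delta t$. Writing $\tilde{f}-f=f(x-v^1\Delta t,\cdot)-f(x,\cdot)$ and applying the mean value theorem in $x$ bounds this by $\Delta t\,|v^1|\sup_x|\partial_x f|$; combining the regularity of $f$ from Theorem \ref{T.3.1} with the hypothesis $\|f_0\|_{L_{1,q+1}^\infty}<\infty$, which provides precisely the one extra velocity weight needed to absorb the stray $|v^1|$, yields $\|\tilde{f}(\cdot,\cdot,t,\cdot)-f(\cdot,\cdot,t,\cdot)\|_{L_q^\infty}\le C\Delta t$ uniformly on $[0,T^f]$. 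The macroscopic fields of $\tilde{f}$ are then within $O(\Delta t)$ of those of $f$, hence for $\Delta t$ below a fixed threshold still satisfy ($\mathcal{A}1$) and ($\mathcal{A}2$), so Proposition \ref{P.6.1} gives $\|\mathcal{M}_{\nu,\theta}(\tilde{f})-\mathcal{M}_{\nu,\theta}(f)\|_{L_q^\infty}\le C_{Lip}\|\tilde{f}-f\|_{L_q^\infty}\le C\Delta t$. (One may avoid the threshold restriction by instead estimating $\mathcal{M}_{\nu,\theta}(\tilde{f})-\mathcal{M}_{\nu,\theta}(f)$ through the $O(\Delta t)$ differences of the individual moments $\tilde{\rho}-\rho$, $\tilde{U}-U$, $\tilde{\Theta}-\Theta$, $\tilde{T}_\delta-T_\delta$, which demand only a few polynomial weights on $\partial_x f$.) The remaining two pieces of $R_1$ carry the Taylor weights $(t+\Delta t-s)$ and $(s-t)$, both $\le\Delta t$ on the interval, times $v^1\partial_x\mathcal{M}_{\nu,\theta}(f)$ and $\partial_t\mathcal{M}_{\nu,\theta}(f)$ at intermediate arguments $x_{\theta_1},t_{\theta_1}$ lying in the relevant ranges; from the proof of Proposition \ref{Mt estimate} one has $|\partial_x\mathcal{M}_{\nu,\theta}(f)|,\,|\partial_t\mathcal{M}_{\nu,\theta}(f)|\le C(1+|v|+|v|^2)\mathcal{M}_{\nu,\theta}(f)$, and since the Gaussian factor dominates any polynomial weight, $\||v^1|\partial_x\mathcal{M}_{\nu,\theta}(f)\|_{L_q^\infty}$ and $\|\partial_t\mathcal{M}_{\nu,\theta}(f)\|_{L_q^\infty}$ are bounded uniformly on $[0,T^f]$ using only the parameter bounds of Theorem \ref{T.3.1} and $q>5+\delta$. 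Integrating then bounds each of these two pieces by $\frac{A_{\nu,\theta}}{\kappa}C(\Delta t)^2$.

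For $R_2$, the integrand carries the factor $(s-t-\Delta t)$, which is $\le\Delta t$ in modulus on $[t,t+\Delta t]$, times $A_{\nu,\theta}(\mathcal{M}_{\nu,\theta}(f)-f)(x_{\theta_2},v,t_{\theta_2},I)$. Proposition \ref{P.6.3} together with Theorem \ref{T.3.1} gives $\|(\mathcal{M}_{\nu,\theta}(f)-f)(\cdot,\cdot,s,\cdot)\|_{L_q^\infty}\le(C+1)\|f(s)\|_{L_q^\infty}\le(C+1)C_{2,1}e^{C_{2,2}T^f}(\|f_0\|_{L_{2,q}^\infty}+1)$ for all $s\in[0,T^f]$, whence $\|R_2\|_{L_q^\infty}\le\frac{A_{\nu,\theta}^2}{\kappa+A_{\nu,\theta}\Delta t}(\Delta t)^2 C\le\frac{A_{\nu,\theta}^2}{\kappa}C(\Delta t)^2$. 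Summing the four contributions produces $\|R_1\|_{L_q^\infty}+\|R_2\|_{L_q^\infty}\le C(\Delta t)^2$ with $C$ depending on $T^f,q,\delta,\kappa,\theta,\nu,C_{2,1},C_{2,2}$ (and on $C_{Lip}$, which depends only on the same data).

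I expect the \emph{main obstacle} to be the weight bookkeeping in the first term of $R_1$: the backward shift $x\mapsto x-v^1\Delta t$ produces an extra power of $|v^1|$ that costs one additional unit of velocity weight on $f$ itself, so one must either invoke the extra regularity encoded in $\|f_0\|_{L_{1,q+1}^\infty}<\infty$ or route the estimate through the macroscopic moments, where the available polynomial decay is comfortably sufficient. The other terms are essentially routine once one notes that the Gaussian in $\mathcal{M}_{\nu,\theta}$ allows one to carry any number of polynomial weights at the cost only of the uniform bounds on $\rho$, $U$, $T_\delta$ from Theorem \ref{T.3.1}.
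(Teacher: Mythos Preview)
Your proposal is correct and follows essentially the same approach as the paper: splitting $R_1$ into the $\mathcal{M}_{\nu,\theta}(\tilde f)-\mathcal{M}_{\nu,\theta}(f)$ piece (handled via Proposition~\ref{P.6.1} after controlling $\|\tilde f-f\|_{L_q^\infty}$ with the extra weight from $\|f_0\|_{L_{1,q+1}^\infty}$) and the Taylor-remainder pieces (handled via Proposition~\ref{Mt estimate}), and treating $R_2$ via Proposition~\ref{P.6.3} and Theorem~\ref{T.3.1}. Your added remark that $\tilde f$ must satisfy ($\mathcal{A}1$)--($\mathcal{A}2$) to invoke Proposition~\ref{P.6.1}, and the alternative of going through the moments, is a point the paper glosses over; otherwise the arguments coincide.
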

\begin{proof}
	We first split $R_1$ in Lemma \ref{L.6.1} into two parts: 
	\begin{align*}
	R_1&=- \frac{A_{\nu,\theta}}{\kappa+A_{\nu,\theta}\Delta t}\int_{t}^{t+\Delta t}  \{\mathcal{M}_{\nu,\theta}(\tilde{f})(x,v,t,I) - \mathcal{M}_{\nu,\theta}(f)(x,v,t,I)\} ds\cr
	&\quad - \frac{A_{\nu,\theta}}{\kappa+A_{\nu,\theta}\Delta t}\bigg(\int_{t}^{t+\Delta t}  (t+\Delta t-s)v^1\partial_x \mathcal{M}_{\nu,\theta}(f)(x_{\theta_1},v,t_{\theta_1},I)\cr
	 &\qquad \qquad \qquad \qquad \qquad \qquad \, \, \quad - (s-t) \partial_t \mathcal{M}_{\nu,\theta}(f)(x_{\theta_1},v,t_{\theta_1},I) ds\bigg)\cr
	&=\mathcal{I}_{51} + \mathcal{I}_{52}.
	\end{align*}
	For $\mathcal{I}_{51}$, we use Proposition \ref{P.6.1} to get
	\begin{align*}
		\|\mathcal{M}_{\nu,\theta}(\tilde{f}) - \mathcal{M}_{\nu,\theta}(f)\|_{L_q^\infty}\leq C_{Lip}\|\tilde{f} - f\|_{L_q^\infty}.
	\end{align*}
	Next, we use the mean value theorem to obtain
	\begin{align*}
	\|\tilde{f} - f\|_{L_q^\infty} = \|\Delta t v^1 \partial_x f\|_{L_q^\infty} \leq \| f\|_{L_{1,q+1}^\infty} \Delta t \leq C_{2,1}e^{C_{2,2} T^f} (\| f_0\|_{L_{1,q+1}^\infty} + 1) \Delta t.
	\end{align*}
	In the last line, we use Theorem \ref{T.3.1}.
	Then,
	\begin{align*}
		\begin{split}
			|\mathcal{I}_{51}| &\leq \frac{A_{\nu,\theta}}{\kappa+A_{\nu,\theta}\Delta t}C_{2,1}e^{C_{2,2} T^f} (\| f_0\|_{L_{1,q+1}^\infty} + 1) (\Delta t)^2\cr
			&\leq \frac{A_{\nu,\theta}}{\kappa}C_{2,1}e^{C_{2,2} T^f} (\| f_0\|_{L_{1,q+1}^\infty} + 1) (\Delta t)^2.
		\end{split}
	\end{align*}
	To estimate $\mathcal{I}_{52}$, we use Proposition \ref{Mt estimate}: 
	\begin{align*}
		\|v^1\partial_x \mathcal{M}_{\nu,\theta}(f)\|_{L^\infty}, \quad \|\partial_t \mathcal{M}_{\nu,\theta}(f)\|_{L^\infty} \leq C(\| f_0\|_{L_{1,q+1}^\infty} + 1),
	\end{align*}
	then
	\begin{align*}
		|\mathcal{I}_{52}| &\leq \frac{2 A_{\nu,\theta}}{\kappa+A_{\nu,\theta}\Delta t}C\left(\| f_0\|_{L_{1,q+1}^\infty} + 1\right) (\Delta t)^2\cr
		&\leq \frac{2 A_{\nu,\theta}}{\kappa}C\left(\| f_0\|_{L_{1,q+1}^\infty} + 1\right) (\Delta t)^2.
	\end{align*}
	Therefore, $R_1$ is estimated by
	\begin{align*}
		|R_1| \leq C (\Delta t)^2.
	\end{align*}
	For $R_2$, we use Proposition \ref{P.6.3} Theorem \ref{T.3.1} to obtain
	\begin{align*}
		\| (\mathcal{M}_{\nu,\theta}(f) - f) \|_{L_q^\infty}\leq \| \mathcal{M}_{\nu,\theta}(f) \|_{L_q^\infty} + \| f \|_{L_q^\infty} \leq C \{ \|f_0\|_{L_q^\infty} + 1\},
	\end{align*}
	from which we have
	\begin{align*}
	|R_2|&= \bigg|\int_{t}^{t+\Delta t} (s-t-\Delta t)A_{\nu,\theta} (\mathcal{M}_{\nu,\theta}(f) - f)(x_{\theta_2},v,t_{\theta_2},I) ds\bigg|\cr
	&\leq A_{\nu,\theta}\| (\mathcal{M}_{\nu,\theta}(f) - f) \|_{L_q^\infty} \int_{t}^{t+\Delta t} (s-t-\Delta t)A_{\nu,\theta} ds\cr
	&\leq  C \{ \|f_0\|_{L_q^\infty} + 1\} (\Delta t)^2.
	\end{align*}
	This completes the proof.
\end{proof}

\section{Estimate of $\mathcal{M}_{\nu,\theta}(\tilde{f}(t^n)) - \mathcal{M}_{\nu,\theta}(\tilde{f}^n)$} 
The goal of this section is to establish the discrepancy estimate of the continuous ellipsoidal Gaussian $\mathcal{M}_{\nu,\theta}(\tilde{f}(t^n))$ in \eqref{conti eg}
and the discrete one $\mathcal{M}_{\nu,\theta}(\tilde{f}^n)$ in \eqref{approx M}.
\begin{lemma}\label{L.7.1}
	Let $\tilde{f}(t^n)$ and $\tilde{f}^n$ denote the continuous and the discrete solutions at $t^n$. Then, 
	\begin{align*}
	\|\tilde{f}(t^n) - \tilde{f}^n\|_{L_q^\infty} \leq \|f(t^n) - f^n\|_{L_q^\infty} + \frac{C_{2,1}}{2}e^{C_{2,2} T^f}  \{ \| f_0\|_{L_{2,q}^\infty} +1 \} (\Delta x)^2,
	\end{align*}
	where $C_{2,1},C_{2,2}$ are defined in Theorem \ref{T.3.1}.
\end{lemma}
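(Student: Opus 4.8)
The plan is to compare $\tilde f(t^n)$ and $\tilde f^n$ by first passing through the un-shifted distributions $f(t^n)$ and $f^n$, at the price of controlling the error introduced by evaluating the shift $x\mapsto x-v^1\Delta t$ on the grid versus linearly interpolating it. Recall that $\tilde f^n_{i,j,k}$ is, by Definition \ref{B-2}, the linear interpolant in $x$ of $f^n_{s,j,k}$ and $f^n_{s+1,j,k}$ at the foot of characteristic $x(i,j)=x_i-v_j^1\Delta t$, whereas $\tilde f(x_i,v_j,t^n,I_k)=f(x_i-v_j^1\Delta t,v_j,t^n,I_k)$ is the \emph{exact} value of the continuous solution at that same point. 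So I would write, for each fixed $i,j,k$,
\[
\tilde f^n_{i,j,k}-\tilde f(x_i,v_j,t^n,I_k)
= \Big( a_j f^n_{s,j,k}+(1-a_j)f^n_{s+1,j,k}\Big)
- \Big( a_j f(x_s,\cdot)+(1-a_j) f(x_{s+1},\cdot)\Big)
+ \Big( a_j f(x_s,\cdot)+(1-a_j)f(x_{s+1},\cdot) - f(x(i,j),\cdot)\Big),
\]
where in the middle I have added and subtracted the linear interpolant of the \emph{continuous} solution at the nodes $x_s,x_{s+1}$ (all other arguments $v_j,t^n,I_k$ suppressed). The first bracket is bounded in weighted norm by $\|f^n-f(t^n)\|_{L_q^\infty}$ using $0\le a_j\le 1$ and the triangle inequality (exactly as in Lemma \ref{L.4.1}), while the second bracket is the standard linear-interpolation remainder.

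Next I would estimate that interpolation remainder. Since $x(i,j)\in[x_s,x_{s+1})$ and the two nodes are $\Delta x$ apart, Taylor's theorem gives
\[
\Big| a_j f(x_s,v_j,t^n,I_k)+(1-a_j)f(x_{s+1},v_j,t^n,I_k) - f(x(i,j),v_j,t^n,I_k)\Big|
\le \tfrac12 (\Delta x)^2 \sup_{x}\big|\partial_{xx} f(x,v_j,t^n,I_k)\big|.
\]
Multiplying by the weight $(1+|v_j|^2+I_k^{2/\delta})^{q/2}$ and taking the supremum over $i,j,k$, the right-hand side is controlled by $\tfrac12(\Delta x)^2\|f(t^n)\|_{2,q}^\infty$ in the notation of the paper; by hypothesis $f$ solves \eqref{A-1} with $f_0\in L_{2,q}^\infty$, and property $(\mathcal A1)$ of Theorem \ref{T.3.1} gives $\|f(t)\|_{L_{2,q}^\infty}\le C_{2,1}e^{C_{2,2}t}\{\|f_0\|_{L_{2,q}^\infty}+1\}\le C_{2,1}e^{C_{2,2}T^f}\{\|f_0\|_{L_{2,q}^\infty}+1\}$ for $0\le t\le T^f$. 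Combining the two brackets yields
\[
\|\tilde f(t^n)-\tilde f^n\|_{L_q^\infty}\le \|f(t^n)-f^n\|_{L_q^\infty}+\frac{C_{2,1}}{2}e^{C_{2,2}T^f}\{\|f_0\|_{L_{2,q}^\infty}+1\}(\Delta x)^2,
\]
which is the claim.

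The only genuinely delicate point is bookkeeping with the weights: the interpolation error is naturally estimated at the shifted node $x(i,j)$ with the \emph{velocity weight} $(1+|v_j|^2+I_k^{2/\delta})^{q/2}$, and one must check that $\|f(t^n)\|_{2,q}^\infty$ — which the paper defines with weight $(1+|v|+I^{1/\delta})^q$ — indeed dominates $\sup_x |\partial_{xx}f|(1+|v|^2+I^{2/\delta})^{q/2}$; this is immediate since $(1+|v|^2+I^{2/\delta})^{q/2}\le (1+|v|+I^{1/\delta})^{q}$ and $|\partial_{xx}f|$ is one of the terms in the sum defining the $\|\cdot\|_{2,q}^\infty$ norm (with $\alpha=2$, $\beta=0$, $\gamma=0$). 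Everything else is the elementary linear-interpolation estimate plus the a priori bound $(\mathcal A1)$; no new ideas beyond Lemma \ref{L.4.1} and Theorem \ref{T.3.1} are needed.
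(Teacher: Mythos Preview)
Your proposal is correct and follows essentially the same route as the paper: both add and subtract the linear interpolant of the continuous solution at the grid nodes $x_s,x_{s+1}$, bound the first piece by $\|f(t^n)-f^n\|_{L_q^\infty}$ via the convex combination, bound the second piece by the standard $\tfrac12(\Delta x)^2\sup|\partial_{xx}f|$ interpolation remainder, and close with $(\mathcal A1)$ from Theorem~\ref{T.3.1}. Your explicit check that the weight $(1+|v|^2+I^{2/\delta})^{q/2}$ is dominated by $(1+|v|+I^{1/\delta})^q$ is exactly the implicit step behind the paper's line $\|\partial_{xx}f(t^n)\|_{L_q^\infty}\le\|f(t^n)\|_{L_{2,q}^\infty}$.
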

\begin{proof}
	Recalling \eqref{B-2}, we compute $\tilde{f}_{i,j,k}^n$ as
	\[
	\tilde{f}_{i,j,k}^n:= a_j f_{s,j,k}^n + (1-a_j) f_{s+1,j,k}^n,\quad a_j=(x_{s+1}-x(i,j))/\Delta x.
	\]
	Also, we use Taylor's theorem to obtain
	\begin{align}\label{G-1}
\begin{split}
		\tilde{f}(x_i,v_j,t^n,I_k)&= f(x_i-v_j^1\Delta t, v_j,t^n,I_k)\cr
	&=a_j \left(f(x_s,v_j,t^n,I_k) + \frac{(x_i-\Delta t v_j^1  -x_s)^2}{2}\partial_{xx} f(x_{\xi_1},v_j,t^n,I_k)\right) \cr
	&+ (1-a_j)  \left(f(x_{s+1},v_j,t^n,I_k) +\frac{(x_i-\Delta t v_j^1  -x_{s+1})^2}{2}\partial_{xx} f(x_{\xi_2},v_j,t^n,I_k)\right),
\end{split}
	\end{align}
	where $x_{\xi_1}$ lies between $x_s$ and $x_i-v_j^1\Delta t$, and $x_{\xi_2}$ lies between $x_{s+1}$ and $x_i-v_j^1\Delta t$. Now, we estimate the discrepancy of $\tilde{f}(x_i,v_j,t^n,I_k)$ and $\tilde{f}_{i,j,k}^n$ as
	\begin{align}\label{subtraction}
	\begin{split}
	|\tilde{f}(t^n) - \tilde{f}^n|&\leq
	a_j |f(x_s,v_j,t^n,I_k) -  f_{s,j,k}^n|  + (1-a_j)|f(x_{s+1},v_j,t^n,I_k) -  f_{s+1,j,k}^n| \cr
	&\quad +a_j \frac{(\Delta x)^2}{2}|\partial_{xx} f(x_{\xi_1},v,t^n,I)| 
	+ (1-a_j)\frac{(\Delta x)^2}{2}|\partial_{xx} f(x_{\xi_2},v_j,t^n,I_k)|.
	\end{split}
	\end{align}
	We also note that Theorem \ref{T.3.1} imposes
	\begin{align*}
	\|\partial_{xx} f(t^n)\|_{L_q^\infty} \leq  \|f(t^n)\|_{L_{2,q}^\infty} 
	\leq C_{2,1}e^{C_{2,2} T^f} \left( \|f_0\|_{L_{2,q}^\infty} +  1\right).
	\end{align*}
	This, combined with \eqref{subtraction}, gives
	\begin{align*}
	\|\tilde{f}(t^n) - \tilde{f}^n\|_{L_q^\infty}\leq&
	a_j \|f(t^n) - f^n\|_{L_q^\infty}  + (1-a_j)\|f(t^n) - f^n\|_{L_q^\infty} +\frac{(\Delta x)^2}{2} \|\partial_{xx} f\|_{L_q^\infty} \cr
	\leq&
	\|f(t^n) - f^n\|_{L_q^\infty} + 2\| f(t^n)\|_{L_{2,q}^\infty}  (\Delta x)^2\cr
	\leq&
	\|f(t^n) - f^n\|_{L_q^\infty} + \frac{C_{2,1}}{2}e^{C_{2,2} T^f}  \{ \| f_0\|_{L_{2,q}^\infty} + 1\} (\Delta x)^2,
	\end{align*}
	which completes the proof.
\end{proof}

\begin{lemma}\label{L.7.2}
	Suppose that $q>5+\delta$ and $\Delta v$, $\Delta I$ satisfies the condition \eqref{cond T.5.5}. Let $\Phi(v,I)$ denote one of $1,\,v,\,|v|^2,\,I^{\frac{2}{\delta}},\, v^m v^n $ $(1\leq m,n \leq 3)$ and $\Phi_{jk}:=\Phi(v_j,I_k)$, then we have
	\begin{align*}
	&\bigg|\sum_{j,k}\tilde{f}_{i,j,k}^n\Phi_{jk}(\Delta v)^3 \Delta I -\int_{\mathbb{R}^3\times\mathbb{R}_+} \tilde{f}(x_i,v,t^n,I)\Phi(v,I)dvdI\bigg| \cr
	&\qquad \leq \bar{C}_1\|\tilde{f}(t^n) - \tilde{f}^n\|_{L_q^\infty} + \bar{C}_2\bigg(\|f_0 \|_{L_{2,q}^\infty} + 1\bigg)( (\Delta x)^2 + \Delta v\Delta t + \Delta v + \Delta I)
	\end{align*}
	for some positive constants $\bar{C}_1$ and $\bar{C}_2$ which depend on $\delta, q, C_{2,1}, C_{2,2}, T^f$.
\end{lemma}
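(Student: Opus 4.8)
The plan is to bound the quadrature error for a general moment $\Phi(v,I)$ against $\tilde f$ by splitting it into three pieces: (i) the error from replacing the \emph{discrete} solution $\tilde f^n_{i,j,k}$ by the sampled \emph{continuous} solution $\tilde f(x_i,v_j,t^n,I_k)$ at the grid nodes; (ii) the error of the midpoint/Riemann-sum quadrature rule applied to the smooth function $v\mapsto \tilde f(x_i,v,t^n,I)\Phi(v,I)$ over the velocity–internal-energy box; and (iii) the truncation error coming from the fact that the sum ranges only over finitely many nodes while the integral is over all of $\mathbb{R}^3\times\mathbb{R}_+$. Concretely, I would write
\begin{align*}
	\Big|\sum_{j,k}\tilde f^n_{i,j,k}\Phi_{jk}(\Delta v)^3\Delta I-\int \tilde f(x_i,v,t^n,I)\Phi\,dvdI\Big|
	&\le \Big|\sum_{j,k}\big(\tilde f^n_{i,j,k}-\tilde f(x_i,v_j,t^n,I_k)\big)\Phi_{jk}(\Delta v)^3\Delta I\Big|\\
	&\quad+\Big|\sum_{j,k}\tilde f(x_i,v_j,t^n,I_k)\Phi_{jk}(\Delta v)^3\Delta I-\int \tilde f(x_i,v,t^n,I)\Phi\,dvdI\Big|,
\end{align*}
and then estimate the two terms on the right separately.

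For the first term, I use the weight identity $|\Phi_{jk}|\le C(1+|v_j|^2+I_k^{2/\delta})$ (valid for every admissible $\Phi$, since each is at most quadratic in $v$ and linear in $I^{2/\delta}$), so that
$$\Big|\sum_{j,k}\big(\tilde f^n_{i,j,k}-\tilde f(x_i,v_j,t^n,I_k)\big)\Phi_{jk}(\Delta v)^3\Delta I\Big|\le \|\tilde f(t^n)-\tilde f^n\|_{L_q^\infty}\,C\sum_{j,k}\frac{(1+|v_j|^2+I_k^{2/\delta})}{(1+|v_j|^2+I_k^{2/\delta})^{q/2}}(\Delta v)^3\Delta I,$$
and the last sum is bounded by a constant $\bar C_1$ thanks to $q>5+\delta$ together with the discrete summability condition \eqref{T.5.5 C} (with $m$ chosen so that $q-m>\max(2,\delta)$). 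This produces the $\bar C_1\|\tilde f(t^n)-\tilde f^n\|_{L_q^\infty}$ contribution. For the second (pure quadrature) term, I note that $g(v,I):=\tilde f(x_i,v,t^n,I)\Phi(v,I)=f(x_i-v^1\Delta t,v,t^n,I)\Phi(v,I)$ is smooth with derivatives controlled by $\|f(t^n)\|_{{2,q}}^\infty$ (hence by $C_{2,1}e^{C_{2,2}T^f}\{\|f_0\|_{L_{2,q}^\infty}+1\}$ via Theorem \ref{T.3.1}), \emph{but} the change of variables $x_i-v^1\Delta t$ makes the effective smoothness constant in the $v^1$ direction scale with $\Delta t$ — this is exactly where the $\Delta v\,\Delta t$ term in the claimed bound comes from: a second-order midpoint rule on a box of volume $(\Delta v)^3\Delta I$ has local error $\sim \|\partial^2 g\|(\Delta v)^{2}$ per unit volume, but the cross term $\partial_{v^1}^2 g$ inherits an extra $\Delta t$ from $\partial_{v^1}[f(x-v^1\Delta t,\dots)]=\Delta t(\partial_x f)(\dots)+\cdots$, so that after summation over all boxes the contributions organize into $(\Delta x)^2+\Delta v\,\Delta t+\Delta v+\Delta I$.

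The tail/truncation piece is handled just as in Lemmas \ref{L.5.6}–\ref{L.5.8}: outside a large ball the weighted bound $|\tilde f(x_i,v,t^n,I)|\le \|f(t^n)\|_{L_q^\infty}(1+|v|^2+I^{2/\delta})^{-q/2}$ makes both the discrete sum over far nodes and the integral over the complement of the ball $O((\Delta v+\Delta I)^{\,q-\text{(degree of }\Phi)-3-\delta})$, which is absorbed into $\Delta v+\Delta I$ since $q>5+\delta$; the conditions \eqref{T.5.5 A} are precisely what lets one dominate the far discrete sum by the corresponding integral. Collecting the three pieces, with $\bar C_2$ depending only on $\delta,q,C_{2,1},C_{2,2},T^f$ through the bound $\|f(t^n)\|_{{2,q}}^\infty\le C_{2,1}e^{C_{2,2}T^f}\{\|f_0\|_{L_{2,q}^\infty}+1\}$, gives the stated inequality. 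The main obstacle I anticipate is the careful bookkeeping in the quadrature-error step: one must track how the $\Delta t$-dependent change of variables $v\mapsto x_i-v^1\Delta t$ enters the Taylor remainders of $g$ so as to isolate the mixed $\Delta v\,\Delta t$ term rather than a spurious $\Delta t$ or $(\Delta v)^2$ term, and one must verify that summing the first-order boundary mismatches of the Riemann sum over the (infinitely many, but exponentially weighted) nodes in each direction converges and produces only the linear $\Delta v+\Delta I$ and the $(\Delta x)^2$ terms — the $(\Delta x)^2$ entering through Lemma \ref{L.7.1}, which is used to convert $\|\tilde f(t^n)-\tilde f^n\|$ appearing implicitly in intermediate steps back to $\|f(t^n)-f^n\|$ plus an $O((\Delta x)^2)$ defect.
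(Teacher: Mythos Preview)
Your overall decomposition is correct and matches the paper: split into (i) the node-wise difference $\sum_{j,k}(\tilde f^n_{i,j,k}-\tilde f(x_i,v_j,t^n,I_k))\Phi_{jk}(\Delta v)^3\Delta I$, controlled by $\|\tilde f(t^n)-\tilde f^n\|_{L_q^\infty}$ times a convergent weighted sum, and (ii) a pure quadrature error for the smooth integrand $\tilde f(x_i,\cdot,t^n,\cdot)\Phi$. The paper carries out essentially the same split (its $\mathcal{I}_{61}$ versus $\mathcal{I}_{62}$, with $\mathcal{I}_{61}$ further split into $\mathcal{I}_{611}$ and $\mathcal{I}_{612}$).

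However, your description of step (ii) is confused in a way that, if followed literally, would not yield the claimed terms. The sum $\sum_{j,k}\tilde f(x_i,v_j,t^n,I_k)\Phi_{jk}(\Delta v)^3\Delta I$ is a \emph{left-endpoint Riemann sum}, not a midpoint rule; its local error is first order, controlled by first derivatives of $g=\tilde f\Phi$. The mixed term $\Delta v\,\Delta t$ therefore arises from the \emph{first} derivative
\[
\partial_{v^1}\tilde f(x_i,v,t^n,I)=\partial_{v^1}\big[f(x_i-v^1\Delta t,v,t^n,I)\big]=-\Delta t\,\partial_x f+\partial_{v^1}f,
\]
not from $\partial_{v^1}^2 g$ as you write. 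This is exactly how the paper produces it: the remainder $R$ in its Taylor expansion contains the term $(v_j^1-v^1)\Delta t\,\partial_x f$, giving $O(\Delta v\,\Delta t)$ after integrating over the cell. The remaining first derivatives $\nabla_v f$, $\partial_I f$, $\nabla_v\Phi$, $\partial_I\Phi$ give the linear $\Delta v+\Delta I$ terms (the bound on $|I_k^{2/\delta}-I^{2/\delta}|$ uses $0<\delta\le 2$). There is no separate tail or truncation piece: both the discrete sum and the integral run over the full domain, and the decay $(1+|v|^2+I^{2/\delta})^{-q/2}$ with $q>5+\delta$, together with \eqref{T.5.5 C}, makes everything absolutely summable.

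Finally, note that in \emph{your} decomposition no $(\Delta x)^2$ appears at all; the paper picks one up only because it further expands $\tilde f(x_i,v_j,t^n,I_k)$ via the linear-interpolation identity \eqref{G-1}, which introduces a $\partial_{xx}f\cdot(\Delta x)^2$ remainder inside $R$. Your route bypasses this and actually gives a slightly sharper bound (with $(\Delta x)^2$ absent on the right-hand side), which of course still implies the stated inequality. So the $(\Delta x)^2$ is not ``entering through Lemma~\ref{L.7.1}'' here---Lemma~\ref{L.7.1} is used only downstream, in Lemma~\ref{L.7.3}, to convert $\|\tilde f(t^n)-\tilde f^n\|_{L_q^\infty}$ back to $\|f(t^n)-f^n\|_{L_q^\infty}$.
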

\begin{proof}
Let $\Delta_{j,k}$ denote an domain such that
$$(v_j,I_k) \in \Delta_{j,k}=[v_j^1,v_j^1+ \Delta v) \times [v_j^2,v_j^2+ \Delta v) \times [v_j^3,v_j^3+ \Delta v) \times [I_k,I_{k+1}). $$
With this, we have
\begin{align*}
\sum_{j,k}\tilde{f}_{i,j,k}^n\Phi_{jk}&(\Delta v)^3 \Delta I -\int_{\mathbb{R}^3\times\mathbb{R}_+} \tilde{f}(x_i,v,t^n,I)\Phi(v,I)dvdI\cr &= 
\sum_{j,k}\tilde{f}_{i,j,k}^n\Phi_{jk}(\Delta v)^3 \Delta I - \sum_{j,k}\int_{\Delta_{j,k}} \tilde{f}(x_i,v,t^n,I)\Phi(v,I)dvdI\cr
&= 
\bigg(\sum_{j,k}\tilde{f}_{i,j,k}^n \Phi_{jk}(\Delta v)^3 \Delta I - \sum_{j,k}\int_{\Delta_{j,k}} \tilde{f}(x_i,v,t^n,I)\Phi_{jk}dvdI\bigg)\cr
&\quad +\bigg(\sum_{j,k}\int_{\Delta_{j,k}} \tilde{f}(x_i,v,t^n,I)\Phi_{jk}dvdI - \sum_{j,k}\int_{\Delta_{j,k}} \tilde{f}(x_i,v,t^n,I)\Phi(v,I)dvdI\bigg)\cr
&=\mathcal{I}_{61}+\mathcal{I}_{62}.
\end{align*}
From (\ref{G-1}) and Taylor's theorem , we have
\begin{align*}
\tilde{f}(x_i,v,t^n,I)&=f(x_i-v^1 \Delta t ,v,t^n,I)\cr
&=f(x_i-v_j^1 \Delta t ,v_j,t^n,I_k) + (v_j^1-v^1)\Delta t \partial_x f(z_{\theta_1}) \cr
&\quad  + (v-v_j)\cdot \nabla_v f(z_{\theta_2}) +(I-I_k) \partial_I f(z_{\theta_3}) \cr
&=a_j f(x_s,v_j,t^n,I_k)+ (1-a_j) f(x_{s+1},v_j,t^n,I_k) +R,
\end{align*}
where $R$ is given by
\begin{align*}
R &= (v_j^1-v^1)\Delta t \partial_x f(z_{\theta_1}) + (v-v_j)\cdot \nabla_v f(z_{\theta_2}) +(I-I_k) \partial_I f(z_{\theta_3})\cr
&\quad +a_j \frac{(x_i-\Delta t v_j^1  -x_s)^2}{2}\partial_{xx} f(x_{\xi_1},v_j,t^n,I_k) \cr
&\quad + (1-a_j)\frac{(x_i-\Delta t v_j^1  -x_{s+1})^2}{2}\partial_{xx} f(x_{\xi_2},v_j,t^n,I_k), 
\end{align*}
where $x_{\xi_1}, x_{\xi_2}\in [x_s,x_{s+1})$ 
and  $z_{\theta_\ell}:=(x_s+\theta_{\ell x}\Delta x,v_j+\theta_{\ell v}\Delta v,t^n,I_k +\theta_{\ell I}\Delta I)$ for some $\theta_{\ell x},\theta_{\ell I} \in [0,1)$, $\theta_{\ell v} \in [0,1)^3$, $(\ell=1,2,3) $.
To estimate $\mathcal{I}_{61}$, we first separate it into two parts:
\begin{align*}
\mathcal{I}_{61}&= 
\sum_{j,k}\tilde{f}_{i,j,k}^n \Phi_{jk}(\Delta v)^3 \Delta I - \sum_{j,k}\int_{\Delta_{j,k}} \tilde{f}(x_i,v,t^n,I)\Phi_{jk}dvdI\cr
&= 
\sum_{j,k}\int_{\Delta_{j,k}}a_j\left( f_{s,j,k}^n-f(x_s,v_j,t^n,I_k) \right) + (1-a_j)\left(   f_{s+1,j,k}^n -  f(x_{s+1},v_j,t^n,I_k)\right)\Phi_{jk} dvdI  \cr
&\quad - \sum_{j,k}\int_{\Delta_{j,k}} R\Phi_{jk}dvdI\cr
&=\mathcal{I}_{611}+\mathcal{I}_{612}.
\end{align*}
We bound $\mathcal{I}_{611}$ as follows:
\begin{align}\label{I6111}
\begin{split}
	&|\mathcal{I}_{611}| \cr 
&\leq \sum_{j,k} \bigg(a_j\left|  f_{s,j,k}^n-f(x_s,v_j,t^n,I_k) \right| 
+ (1-a_j) \left|f_{s+1,j,k}^n -  f(x_{s+1},v_j,t^n,I_k) \right|\bigg) |\Phi_{jk}|  (\Delta v)^3 \Delta I  \cr
& \leq \|\tilde{f}(t^n) - \tilde{f}^n\|_{L_q^\infty} \sum_{j,k}  \frac{|\Phi_{jk}|}{(1+|v_j|^2+I_k^\frac{2}{\delta})^{\frac{q}{2}}}  (\Delta v)^3 \Delta I  \cr
& \leq 2\bar{C}_{\delta,q-2}\|\tilde{f}(t^n) - \tilde{f}^n\|_{L_q^\infty}.
\end{split}
\end{align}
In the last line, the inequality comes from Theorem \ref{T.5.5}.
For $\mathcal{I}_{612}$, we bound $R$ using Theorem \ref{T.3.1} and the following inequality:
\begin{align*}
|\partial_{x} f(z_{\theta_\ell})| ,|\partial_{xx} f(z_{\theta_\ell})| ,| \nabla_v f(z_{\theta_\ell})|, |\partial_I f(z_{\theta_\ell})|
& \leq  \frac{\|f \|_{L_{2,q}^\infty} }{(1+|v_j+\theta_{\ell v} \Delta v|^2+(I_k + \theta_{\ell I}\Delta I)^\frac{2}{\delta})^{\frac{q}{2}}}\cr
& \leq  \frac{\|f \|_{L_{2,q}^\infty} }{\left( 1+|v_j|^2+I_k ^\frac{2}{\delta}\right)^{\frac{q}{2}}}.
\end{align*} 
That is,
$$|R|\leq  C_{2,1}e^{C_{2,2} T^f}( (\Delta x)^2 + \Delta v\Delta t + \Delta v + \Delta I)\left(\frac{\|f_0 \|_{L_{2,q}^\infty} + 1}{(1+|v_j|^2+I_k^\frac{2}{\delta})^{\frac{q}{2}}} \right). $$
Using this, we have
\begin{align*}
|\mathcal{I}_{612}| &\leq  \sum_{j,k}\int_{\Delta_{j,k}} |R||\Phi_{jk}|dvdI\cr
&\leq C_{2,1}e^{C_{2,2} T^f}( (\Delta x)^2 + \Delta v\Delta t + \Delta v + \Delta I)\bigg(\|f_0 \|_{L_{2,q}^\infty} + 1\bigg) \sum_{j,k} \int_{\Delta_{j,k}}  \frac{|\Phi_{jk}|}{(1+|v_j|^2+I_k^\frac{2}{\delta})^{\frac{q}{2}}}dvdI\cr
&\leq 2\bar{C}_{\delta,q-2}C_{2,1}e^{C_{2,2} T^f}( (\Delta x)^2 + \Delta v\Delta t + \Delta v + \Delta I)\bigg(\|f_0 \|_{L_{2,q}^\infty} + 1\bigg),
\end{align*}
where the last inequality holds as in \eqref{I6111}.
For $\mathcal{I}_{62}$, we consider $(v,I)\equiv (v_j+\xi \Delta v, I_k + \eta \Delta I) \in \Delta_{j,k}$, for $\xi,\eta \in [0,1)$. Then, we have from $\Delta v \leq \frac{1}{2}$ that
\begin{align*}
	|v_j-v| \leq \sqrt{3}\Delta v&, \quad ||v_j|^2-|v|^2| \leq \sqrt{3}\Delta v (|v_j| + |v|) \leq \sqrt{3} \Delta v (\sqrt{3}\Delta v + 2|v|) \leq 6 \Delta v (1 + |v|^2)
\end{align*}
and, for $1\leq m, n \leq 3$,
\begin{align*}
	|v_j^m v_j^n - v^m v^n|&\leq |v_j^m v_j^n - v_j^m v^n + v_j^m v^n- v^m v^n|\cr
	&\leq |v_j^m v_j^n - v_j^m v^n| + |v_j^m v^n- v^m v^n|\cr
	&\leq \Delta v|v_j^m| + \Delta v |v^n|\cr
	&\leq 3 \Delta v (1 + |v|^2).
\end{align*}
Moreover, for $I\in [I_k,I_{k+1}) $, the mean-value theorem implies 
\[
|I_k^{\frac{2}{\delta}}-I^{\frac{2}{\delta}}| \leq |I_k-I| \frac{2}{\delta}(I+\Delta I)^{\frac{2}{\delta}-1} \leq \frac{2 \Delta I}{\delta} (I+\Delta I)^{\frac{2}{\delta}-1}, \quad 0 <\delta \leq 2.
\]
This, together with the assumption $\Delta I <\frac{1}{2}$ in \eqref{cond T.5.5}, gives
\begin{align*}
|I_k^{\frac{2}{\delta}}-I^{\frac{2}{\delta}}| 
\leq \frac{2\Delta I}{\delta} \big(I + 1 \big)^{\frac{2}{\delta}-1} \leq \frac{2\Delta I}{\delta} \big(2^{\frac{2}{\delta}-1}+ (2I)^{\frac{2}{\delta}-1}\big)\leq 2^{\frac{2}{\delta}}\frac{\Delta I}{\delta} \big(1+ I^{\frac{2}{\delta}}\big). 
\end{align*}
To sum up,
\begin{align*}
|\Phi_{jk} - \Phi(v,I)| &\leq 
6 \Delta v (1 + |v|^2) + 2^{\frac{2}{\delta}}\frac{\Delta I}{\delta} \big(1+ I^{\frac{2}{\delta}}\big).
\end{align*}
Now, $\mathcal{I}_{62}$ is estimated by
\begin{align*}
|\mathcal{I}_{62}|
&\leq\sum_{j,k}\int_{\Delta_{j,k}} \tilde{f}(x_i,v,t^n,I)|\Phi_{jk}-\Phi(v,I)|dvdI \cr
&\leq \|f(t^{n})\|_{L_q^\infty} \sum_{j,k}\bigg\{\int_{\Delta_{j,k}} \frac{6 \Delta v (1 + |v|^2) }{(1+|v|^2 + I^{\frac{2}{\delta}})^\frac{q}{2}}dvdI + \int_{\Delta_{j,k}} \frac{2^{\frac{2}{\delta}}\frac{\Delta I}{\delta} \big(1+ I^{\frac{2}{\delta}}\big)}{(1+|v|^2 + I^{\frac{2}{\delta}})^\frac{q}{2}}dvdI \bigg\} \cr
&\leq \|f(t^{n})\|_{L_q^\infty} \left(6\Delta v  + 2^{\frac{2}{\delta}}\frac{\Delta I}{\delta}\right)\sum_{j,k} \int_{\Delta_{j,k}} \frac{ 1}{(1+|v|^2 + I^{\frac{2}{\delta}})^\frac{q-2}{2}}dvdI \cr
&\leq \left(6+ 2^{\frac{2}{\delta}}\frac{1}{\delta}\right)  (\Delta v  + \Delta I ) \bar{C}_{\delta,q-2} C_{2,1}e^{C_{2,2} T^f}\{ \|f_0\|_{L_{2,q}^\infty} + 1 \} .
\end{align*}
where $\bar{C}_{\delta,q-2}$ is given in  Definition \ref{D.5.1}. Combining $\mathcal{I}_{61}$ and $\mathcal{I}_{62}$, we obtain the desired result. 
\end{proof}

\begin{lemma}\label{L.7.3} Suppose that $q>5+\delta$ and $\Delta v$, $\Delta I$ satisfies the condition \eqref{cond T.5.5}. Then, 
	\begin{align*}
		&|\tilde{\rho}_i -\tilde{\rho}(x_i,t^n)|,~|\tilde{U}_i -\tilde{U}(x_i,t^n)|,~|(\tilde{\mathcal{T}}_{\nu,\theta}^{\alpha,\beta})_i^{n} -\tilde{\mathcal{T}}_{\nu,\theta}^{\alpha,\beta}(x_i,t^n)| \cr
		&\qquad \leq C \|f(t^n) - f^n\|_{L_q^\infty} + C \{\|f_0\|_{L_{2,q}^\infty} +1 \} \{(\Delta x)^2 + \Delta v + \Delta I + \Delta v \Delta t\}.
	\end{align*}
	where $C>0$ is a constant and the $(\alpha,\beta)$ element of $\tilde{\mathcal{T}}_{\nu,\theta}$ is denoted by $\tilde{\mathcal{T}}_{\nu,\theta}^{\alpha,\beta}$ for $1 \leq \alpha, \beta \leq 3$.
\end{lemma}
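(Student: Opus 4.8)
The plan is to reduce each of the three quantities to a fixed algebraic combination of the discrete moments
\[
\sum_{j,k}\tilde f_{i,j,k}^n\,\Phi_{jk}\,(\Delta v)^3\Delta I,\qquad \Phi\in\{1,\ v,\ |v|^2,\ I^{2/\delta},\ v^\alpha v^\beta\},
\]
minus the corresponding integrals of $\tilde f(x_i,\cdot,t^n,\cdot)$, and then to invoke Lemma~\ref{L.7.2} for each such $\Phi$ followed by Lemma~\ref{L.7.1} to replace $\|\tilde f(t^n)-\tilde f^n\|_{L_q^\infty}$ by $\|f(t^n)-f^n\|_{L_q^\infty}+C\{\|f_0\|_{L_{2,q}^\infty}+1\}(\Delta x)^2$. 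Since $q>5+\delta$ and condition \eqref{cond T.5.5} are assumed, Lemma~\ref{L.7.2} applies and Theorem~\ref{T.5.5} furnishes the uniform stability bounds: the lower bound $\tilde\rho_i^n\ge\tilde\rho_{lower}>0$ from $C^n$ and the upper bounds for $\|\tilde\rho^n\|_{L_x^\infty},\|\tilde U^n\|_{L_x^\infty},\|(\tilde T_\delta)^n\|_{L_x^\infty}$ from $D^n$. I will also use the analogous bounds for the continuous shifted quantities — boundedness of $\tilde\rho(\cdot,t^n),\tilde U(\cdot,t^n),\tilde T_{tr}(\cdot,t^n)$ and a uniform lower bound $\tilde\rho(\cdot,t^n)\ge c>0$ — which follow from Theorem~\ref{T.3.1}~$(\mathcal{A}1)$--$(\mathcal{A}2)$ together with the pointwise exponential lower bound on $f$ propagated from hypothesis (2) on $f_0$ via the mild form of \eqref{A-1}.

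For the density one applies Lemma~\ref{L.7.2} with $\Phi\equiv1$ directly and then Lemma~\ref{L.7.1}. For the bulk velocity I would write
\[
\tilde U_i^n-\tilde U(x_i,t^n)=\frac{\tilde\rho_i^n\tilde U_i^n-\tilde\rho(x_i,t^n)\tilde U(x_i,t^n)}{\tilde\rho_i^n}+\frac{\tilde U(x_i,t^n)}{\tilde\rho_i^n}\bigl(\tilde\rho(x_i,t^n)-\tilde\rho_i^n\bigr),
\]
and note that $\tilde\rho_i^n\tilde U_i^n=\sum_{j,k}\tilde f_{i,j,k}^n v_j(\Delta v)^3\Delta I$ while $\tilde\rho(x_i,t^n)\tilde U(x_i,t^n)=\int v\,\tilde f(x_i,v,t^n,I)\,dvdI$, so the first numerator is exactly the $\Phi=v$ quantity of Lemma~\ref{L.7.2} and the prefactor $1/\tilde\rho_i^n$ is bounded by $1/\tilde\rho_{lower}$; the second term is controlled by the density estimate just proved, together with the boundedness of $\tilde U(x_i,t^n)$ and of $1/\tilde\rho_i^n$.

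For the temperature tensor, unfold $(\tilde{\mathcal T}_{\nu,\theta})_i^n$ via \eqref{B-5} into $\lambda\theta(\tilde T_\delta)_i^n\,Id+\lambda(1-\theta)(1-\nu)(\tilde T_{tr})_i^n\,Id+(1-\theta)\bar\nu\,\tilde\Theta_i^n$, and the continuous counterpart analogously; since $\lambda,\bar\nu$ are bounded uniformly in $\Delta t$ ($1\le\lambda\le\max\{1,A_{\nu,\theta}\}$, $|\bar\nu|\le|\nu|<1$), it suffices to bound $(\tilde T_{I,\delta})_i^n-\tilde T_{I,\delta}(x_i,t^n)$, $(\tilde T_{tr})_i^n-\tilde T_{tr}(x_i,t^n)$ and $(\tilde\Theta_i^n)^{\alpha\beta}-\tilde\Theta^{\alpha\beta}(x_i,t^n)$ separately, and then assemble $(\tilde T_\delta)_i^n=\tfrac{3}{3+\delta}(\tilde T_{tr})_i^n+\tfrac{\delta}{3+\delta}(\tilde T_{I,\delta})_i^n$. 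Expanding $|v_j-\tilde U_i^n|^2$ and $(v_j-\tilde U_i^n)\otimes(v_j-\tilde U_i^n)$ and using $\sum_{j,k}\tilde f_{i,j,k}^n v_j(\Delta v)^3\Delta I=\tilde\rho_i^n\tilde U_i^n$ gives
\[
(\tilde T_{I,\delta})_i^n=\frac{2}{\delta}\frac{1}{\tilde\rho_i^n}\sum_{j,k}\tilde f_{i,j,k}^n I_k^{2/\delta}(\Delta v)^3\Delta I,\quad (\tilde T_{tr})_i^n=\frac13\Bigl(\frac{1}{\tilde\rho_i^n}\sum_{j,k}\tilde f_{i,j,k}^n|v_j|^2(\Delta v)^3\Delta I-|\tilde U_i^n|^2\Bigr),
\]
\[
(\tilde\Theta_i^n)^{\alpha\beta}=\frac{1}{\tilde\rho_i^n}\sum_{j,k}\tilde f_{i,j,k}^n v_j^\alpha v_j^\beta(\Delta v)^3\Delta I-(\tilde U_i^n)^\alpha(\tilde U_i^n)^\beta,
\]
and the same identities hold for the continuous objects with sums replaced by integrals. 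Each difference is then handled by the quotient manipulation used for $\tilde U$: the moment difference comes from Lemma~\ref{L.7.2} with $\Phi=I^{2/\delta},|v|^2,v^\alpha v^\beta$, the reciprocal-density factor is bounded by $1/\tilde\rho_{lower}$, the ratio of the continuous $\Phi$-moment to $\tilde\rho(x_i,t^n)$ is bounded by Theorem~\ref{T.3.1} and its shifted analogues, and the quadratic terms in $\tilde U$ are treated via $|\tilde U_i^n|^2-|\tilde U(x_i,t^n)|^2=(\tilde U_i^n+\tilde U(x_i,t^n))\cdot(\tilde U_i^n-\tilde U(x_i,t^n))$ using the bounds on $|\tilde U_i^n|,|\tilde U(x_i,t^n)|$ and the velocity estimate. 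Collecting all contributions and applying Lemma~\ref{L.7.1} once more yields the stated bound after renaming constants.

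I expect the only genuinely delicate point to be the division by $\tilde\rho$: $\tilde U$, the temperatures and $\tilde\Theta$ are all quotients, so the argument requires $\Delta t$-independent lower bounds for both $\tilde\rho_i^n$ and $\tilde\rho(x_i,t^n)$. The discrete one is exactly the estimate $C^n$ of Theorem~\ref{T.5.5}, whose uniformity in $\Delta t$ is, as the authors stress, the subtle part of the whole stability analysis; the continuous one must be obtained from the propagated pointwise lower bound on $f$ inherited from hypothesis (2) on $f_0$, which is precisely why that hypothesis is imposed. A secondary bookkeeping point is that $\lambda$ and $\bar\nu$ depend on $\Delta t$ but stay bounded, so they are absorbed into $C$ without generating a spurious $\Delta t$ term; the genuinely $O(\Delta t)$ gap between $(\tilde{\mathcal T}_{\nu,\theta})_i^n$ and the unweighted temperature tensor of $\tilde f$ (arising from $\lambda\neq1$, $\bar\nu\neq\nu$) lies outside this lemma and is accounted for separately when the two Gaussians are compared in Section~8.
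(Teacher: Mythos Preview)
Your treatment of $\tilde\rho$ and $\tilde U$ is the same as the paper's: write the quotient decomposition, invoke Lemma~\ref{L.7.2}, bound $1/\tilde\rho_i^n$ by $C^n$ of Theorem~\ref{T.5.5}, and finish with Lemma~\ref{L.7.1}. For the component temperatures and stress you take a mild variant---bounding $(\tilde T_{tr})_i^n-\tilde T_{tr}$, $(\tilde T_{I,\delta})_i^n-\tilde T_{I,\delta}$ and $\tilde\Theta_i^n-\tilde\Theta$ separately via expansion of $|v-U|^2$ and $(v-U)\otimes(v-U)$---whereas the paper works directly with $\tilde\rho_i^n(\tilde{\mathcal T}_{\nu,\theta})_i^n-\tilde\rho\,\tilde{\mathcal T}_{\nu,\theta}$ and splits that expression into pieces $\mathcal I_{71},\dots,\mathcal I_{74}$. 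Either route works and the bookkeeping is equivalent.

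There is, however, a genuine mismatch in your handling of the coefficient discrepancy. You write that the continuous tensor is unfolded ``analogously'' with the same weights $\lambda,\bar\nu$, and that the residual $O(\Delta t)$ gap from $\lambda\neq1$, $\bar\nu\neq\nu$ ``lies outside this lemma''. It does not: in Section~6 the continuous object $\tilde{\mathcal T}_{\nu,\theta}(x,t)$ is defined with the \emph{plain} coefficients $\theta$, $(1-\theta)(1-\nu)$, $(1-\theta)\nu$, so the very quantity the lemma asks you to bound already contains that discrepancy. The paper's proof accounts for it explicitly as the term
\[
\mathcal I_{74}=\tilde\rho_i^n\Bigl[\tfrac{(1-\theta)\nu\Delta t A_{\nu,\theta}\theta}{\Delta t+\kappa}(\tilde T_\delta)_i^n Id+\tfrac{(1-\theta)\nu\Delta t(1-A_{\nu,\theta}\theta)}{\Delta t+\kappa}(\tilde T_{tr})_i^n Id-(1-\theta)\nu\tfrac{\Delta t}{\Delta t+\kappa}\tilde\Theta_i^n\Bigr],
\]
whose coefficients are all of size $C\Delta t/(\Delta t+\kappa)$ and whose macroscopic factors are bounded by $D^n$. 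Your proposal omits this term from the lemma entirely. (One can note that $\mathcal I_{74}$ does not literally match the right-hand side as stated in Lemma~\ref{L.7.3}; it gives an extra contribution of order $\Delta t/(\Delta t+\kappa)$ that is harmless for Theorem~\ref{T.3.2} since in Section~8 the tensor estimate is multiplied by $A_{\nu,\theta}\Delta t/(\kappa+A_{\nu,\theta}\Delta t)$, but it is part of the lemma's content, not something to defer.) Otherwise your argument is correct.
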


\begin{proof}
	Consider the case $\Phi_{jk} \equiv 1$ in Lemma \ref{L.7.2}, then 
	\begin{align*}
		|\tilde{\rho}_i^n -\tilde{\rho}(x_i,t^n)|&= \bigg|\sum_{j,k}\tilde{f}_{i,j,k}^n (\Delta v)^3 \Delta I - \sum_{j,k}\int_{\Delta_{j,k}} \tilde{f}(x_i,v,t^n,I)dvdI\bigg|\cr
		&\leq \bar{C}_1 \|f(t^n) - f^n\|_{L_q^\infty} + \bar{C}_2 \{\| f_0\|_{L_{2,q}^{\infty}} +1 \} \{(\Delta x)^2 + \Delta v + \Delta I + \Delta v \Delta t\}.
	\end{align*}
The number $\bar{C}_1$ and $\bar{C}_2$ are constants in Lemma \ref{L.7.2}. For the second estimate, we begin with
	\begin{align*}
		|\tilde{U}_i^n -\tilde{U}(x_i,t^n)|&=\bigg|\frac{\tilde{\rho}_i^n\tilde{U}_i^n - \tilde{\rho}(x_i,t^n)\tilde{U}(x_i,t^n)}{\tilde{\rho}_i^n} +\frac{\tilde{\rho}(x_i,t^n)\tilde{U}(x_i,t^n)-\tilde{\rho}_i^n\tilde{U}(x_i,t^n)}{\tilde{\rho}_i^n}\bigg|.
	\end{align*}
	From $C^n$ in Definition \ref{D.5.2}, we have
	\begin{align*}
	\frac{1}{\tilde{\rho}_i^n} \leq \frac{2}{\bar{C}_{a,b} C_0^1}e^{\frac{A_{\nu,\theta}}{\kappa} T^f},
	\end{align*}
	which together with Lemma \ref{L.7.2} gives
	\begin{align*}
		|\tilde{\rho}_i^n\tilde{U}_i^n - \tilde{\rho}\tilde{U}(x_i,t^n)| \leq \bar{C}_1\|\tilde{f}(t^n) - \tilde{f}^n\|_{L_q^\infty} + \bar{C}_2\bigg(\|f_0 \|_{L_{2,q}^\infty} + 1\bigg)( (\Delta x)^2 + \Delta v\Delta t + \Delta v + \Delta I).
	\end{align*}
	Moreover, we have
	\begin{align*}
		|\tilde{\rho}\tilde{U}(x_i,t^n)| &= \left|\int_{\mathbb{R}^3\times\mathbb{R}_+}v\tilde{f}(x_i,v,t^n,I_k)dvdI\right|\\
		&= \int_{\mathbb{R}^3\times\mathbb{R}_+}|v|f(x_i-v^1\Delta t,v,t^n,I)\frac{(1+|v|^2 + I^\frac{2}{\delta})^\frac{q}{2}}{(1+|v|^2 + I^\frac{2}{\delta})^\frac{q}{2}}dvdI\\
		&\leq \|f(t^n)\|_{L_q^\infty}\int_{\mathbb{R}^3\times\mathbb{R}_+}\frac{1}{(1+|v|^2 + I^\frac{2}{\delta})^\frac{q-1}{2}}dvdI\\
		&= \bar{C}_{\delta,q-1}C_{2,1}e^{C_{2,2} T^f}\{ \|f_0\|_{L_{2,q}^\infty} + 1 \}.
	\end{align*}
	Therefore,
	\begin{align*}
	|\tilde{U}_i^n -\tilde{U}(x_i,t^n)|&=\bigg|\frac{\tilde{\rho}_i^n\tilde{U}_i^n - \tilde{\rho}(x_i,t^n)\tilde{U}(x_i,t^n)}{\tilde{\rho}_i^n} +\frac{\tilde{\rho}(x_i,t^n)\tilde{U}(x_i,t^n)-\tilde{\rho}_i^n\tilde{U}(x_i,t^n)}{\tilde{\rho}_i^n}\bigg|\cr
	&= \frac{1}{\tilde{\rho}_i^n}\bigg|\sum_{j,k}\tilde{f}_{i,j,k}^n v_j (\Delta v)^3 \Delta I - \sum_{j,k}\int_{\Delta_{j,k}} \tilde{f}(x_i,v,t^n,I)vdvdI\bigg|\cr
	&\quad + \frac{\tilde{U}(x_i,t^n)}{\tilde{\rho}_i^n}\bigg|\sum_{j,k}\tilde{f}_{i,j,k}^n  (\Delta v)^3 \Delta I - \sum_{j,k}\int_{\Delta_{j,k}} \tilde{f}(x_i,v,t^n,I)dvdI\bigg|\cr
	&\leq C \|f(t^n) - f^n\|_{L_q^\infty} + C\{\| f_0\|_{L_{2,q}^\infty}+1\} \{(\Delta x)^2 + \Delta v + \Delta I + \Delta v \Delta t\},
	\end{align*}
	for a constant $C>0$. 
	
	For the estimate of $\tilde{\mathcal{T}}_{\nu,\theta}$, we recall its definition in to get	
	\begin{align*}
	\tilde{\rho}_i^n(&\tilde{\mathcal{T}}_{\nu,\theta})_i^n - \tilde{\rho}\tilde{\mathcal{T}}_{\nu,\theta}\\
	&=(1-\theta)\tilde{\rho}_i^n\{(1-\nu)(\tilde{T}_{tr})_i^n Id+\nu\tilde{\Theta}_i^n\}+\theta \tilde{\rho}_i^n (\tilde{T}_{\delta})_i^nId\\
	&\quad + \tilde{\rho}_i^n\bigg[\frac{(1-\theta)\nu \Delta tA_{\nu,\theta}\theta}{\Delta t + \kappa} (\tilde{T}_\delta)_i^n Id + \frac{(1-\theta)\nu\Delta t(1- A_{\nu,\theta} \theta)}{\Delta t + \kappa}(\tilde{T}_{tr})_i^nId - (1-\theta)\nu \frac{ \Delta t }{\Delta t + \kappa}\tilde{\Theta}_i^n \bigg]\\
	&\quad -(1-\theta)\tilde{\rho}\{(1-\nu)\tilde{T}_{tr}Id+\nu\tilde{\Theta}\}-\theta\tilde{\rho}\tilde{T}_{\delta}Id\\
	&=(1-\theta)
	\sum_{j,k} \tilde{f}_{i,j,k}^n\Big\{\frac{(1-\nu)}{3}|v_j-\tilde{U}_i^n|^2Id+\nu(v_j-\tilde{U}_i^n)\otimes(v-\tilde{U}_i^n)\Big\} (\Delta v)^3 \Delta I\\
	&\quad +\frac{\theta}{3+\delta}
	\sum_{j,k}\tilde{f}_{i,j,k}^n\Big\{|v_j-\tilde{U}_i^n|^2+2I^{\frac{2}{\delta}}\Big\}Id (\Delta v)^3 \Delta I\\
	&\quad -(1-\theta)
	\int_{\mathbb{R}^3\times\mathbb{R}^+}\tilde{f}\Big\{\frac{(1-\nu)}{3}|v-\tilde{U}|^2Id+\nu(v-\tilde{U})\otimes(v-\tilde{U})\Big\}dvdI \\
	&\quad -\frac{\theta}{3+\delta}
	\int_{\mathbb{R}^3\times\mathbb{R}^+}\tilde{f}\big\{|v-\tilde{U}|^2+2I^{\frac{2}{\delta}}\big\}IddvdI	\\
	&\quad + \tilde{\rho}_i^n\bigg[\frac{(1-\theta)\nu \Delta tA_{\nu,\theta}\theta}{\Delta t + \kappa} (\tilde{T}_\delta)_i^n Id + \frac{(1-\theta)\nu\Delta t(1- A_{\nu,\theta} \theta)}{\Delta t + \kappa}(\tilde{T}_{tr})_i^nId - (1-\theta)\nu \frac{ \Delta t }{\Delta t + \kappa}\tilde{\Theta}_i^n \bigg],
	\end{align*}
	which can be rewritten as
	\begin{align*}
	&\tilde{\rho}_i^n(\tilde{\mathcal{T}}_{\nu,\theta})_i^n - \tilde{\rho}\tilde{\mathcal{T}}_{\nu,\theta}\\
	&=\Big\{(1-\theta)\frac{1-\nu}{3}+\frac{\theta}{3+\delta}\Big\}\bigg( \sum_{j,k}\tilde{f}_{i,j,k}^n|v_j-\tilde{U}_i^n|^2 (\Delta v)^3 \Delta I - \int_{\mathbb{R}^3\times\mathbb{R}^+}\tilde{f}|v-\tilde{U}|^2 dvdI \bigg)Id \\
	&\quad+(1-\theta)\nu
	\bigg(\sum_{j,k}\tilde{f}_{i,j,k}^n (v_j-\tilde{U}_i^n)\otimes(v_j-\tilde{U}_i^n) (\Delta v)^3 \Delta I-\int_{\mathbb{R}^3\times\mathbb{R}^+}\tilde{f}(v-\tilde{U})\otimes(v-\tilde{U})dvdI \bigg) \\
	&\quad+\frac{2\theta}{3+\delta}\bigg( \sum_{j,k}\tilde{f}_{i,j,k}^nI_k^{\frac{2}{\delta}} (\Delta v)^3 \Delta I-\int_{\mathbb{R}^3\times\mathbb{R}^+}\tilde{f}I^{\frac{2}{\delta}}dvdI  \bigg)Id \\
	&\quad+ \tilde{\rho}_i^n\bigg[\frac{(1-\theta)\nu \Delta tA_{\nu,\theta}\theta}{\Delta t + \kappa} (\tilde{T}_\delta)_i^n Id + \frac{(1-\theta)\nu\Delta t(1- A_{\nu,\theta} \theta)}{\Delta t + \kappa}(\tilde{T}_{tr})_i^nId - (1-\theta)\nu \frac{ \Delta t }{\Delta t + \kappa}\tilde{\Theta}_i^n \bigg]\\
	&\equiv \mathcal{I}_{71}+\mathcal{I}_{72}+\mathcal{I}_{73}+\mathcal{I}_{74}.  
	\end{align*}
	For $\mathcal{I}_{71}$, we use Lemma \ref{L.7.2} to obtain
	\begin{align*}
	&\sum_{j,k}\tilde{f}_{i,j,k}^n|v_j-\tilde{U}_i^n|^2 (\Delta v)^3 \Delta I-\int_{\mathbb{R}^3\times\mathbb{R}^+}\tilde{f}|v_j-\tilde{U}|^2 dvdI \\ &\quad \leq C \|f(t^n) - f^n\|_{L_q^\infty} + C \{\| f_0\|_{L_{2,q}^\infty} + 1\} \{(\Delta x)^2 + \Delta v + \Delta I + \Delta v \Delta t\}.
	\end{align*}
	Similar estimates hold for $\mathcal{I}_{72}$ and $\mathcal{I}_{73}$. Together with 
	\[
	\bigg|\frac{(1-\theta)\nu \Delta tA_{\nu,\theta}\theta}{\Delta t + \kappa}\bigg|,\quad  \bigg|\frac{(1-\theta)\nu\Delta t(1- A_{\nu,\theta} \theta)}{\Delta t + \kappa}\bigg|,\quad \bigg|(1-\theta)\nu \frac{ \Delta t }{\Delta t + \kappa}\bigg| \leq C \frac{\Delta t}{\Delta t + \kappa},
	\]
	the macroscopic quantities in $\mathcal{I}_{74}$ are also bounded by $D^n$ in Definition \ref{D.5.2}. Therefore, for $1\leq \alpha, \beta \leq 3$, we have
	\begin{align*}
	|(\tilde{\mathcal{T}}_{\nu,\theta}^{\alpha,\beta})_i^{n} &-\tilde{\mathcal{T}}_{\nu,\theta}^{\alpha,\beta}(x_i,t^n)|\\&=\bigg|\frac{\tilde{\rho}_i^n(\tilde{\mathcal{T}}_{\nu,\theta}^{\alpha,\beta})_i^n - \tilde{\rho}(x_i,t^n)\tilde{\mathcal{T}}_{\nu,\theta}^{\alpha,\beta}(x_i,t^n)}{\tilde{\rho}_i^n} +\frac{\tilde{\rho}(x_i,t^n)\tilde{\mathcal{T}}_{\nu,\theta}^{\alpha,\beta}(x_i,t^n)-\tilde{\rho}_i^n\tilde{\mathcal{T}}_{\nu,\theta}^{\alpha,\beta}(x_i,t^n)}{\tilde{\rho}_i^n}\bigg|\cr
	&\leq \frac{1}{\tilde{\rho}_i^n}\bigg|\tilde{\rho}_i^n(\tilde{\mathcal{T}}_{\nu,\theta}^{\alpha,\beta})_i^n - \tilde{\rho}(x_i,t^n)\tilde{\mathcal{T}}_{\nu,\theta}^{\alpha,\beta}(x_i,t^n)  \bigg|+\frac{|\tilde{\mathcal{T}}_{\nu,\theta}^{\alpha,\beta}(x_i,t^n)|}{\tilde{\rho}_i^n}\bigg|\tilde{\rho}(x_i,t^n)-\tilde{\rho}_i^n\bigg|\cr
	&\leq
	C \|f(t^n) - f^n\|_{L_q^\infty} + C \{\| f_0\|_{L_{2,q}^\infty} + 1\} \{(\Delta x)^2 + \Delta v + \Delta I + \Delta v \Delta t\},
	\end{align*}
	for a constant $C>0$. This completes the proof.

\end{proof}

The following is the main result of this section.
\begin{proposition}\label{P.7.1}
	Suppose that $q>5+\delta$ and $\Delta v$, $\Delta I$ satisfies the condition \eqref{cond T.5.5}. Then, 
	\begin{align*}
	\|\mathcal{M}_{\nu,\theta}&(\tilde{f}(t^n)) - \mathcal{M}_{\nu,\theta}(\tilde{f}^n)\|_{L_q^\infty}\cr
	&\leq C \|f(t^n) - f^n\|_{L_q^\infty} + C \{\| f_0\|_{L_{2,q}^\infty} +1\} \{(\Delta x)^2 + \Delta v + \Delta I + \Delta v \Delta t\}.
	 \end{align*}
\end{proposition}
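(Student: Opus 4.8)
The plan is to bound the difference of the two Gaussians pointwise, weighted by $(1+|v_j|^2+I_k^{2/\delta})^{q/2}$, by first controlling the discrepancy in their defining macroscopic parameters and then using a Lipschitz-type dependence of the Gaussian on those parameters. Concretely, both $\mathcal{M}_{\nu,\theta}(\tilde f(t^n))$ and $\mathcal{M}_{\nu,\theta}(\tilde f^n)$ have the same functional form, differing only through the five-field data $\bigl(\tilde\rho,\tilde U,(\tilde{\mathcal T}_{\nu,\theta})^{\alpha\beta},(\tilde T_\theta)\bigr)$ evaluated either continuously at $x_i$ or discretely. By Lemma \ref{L.7.3} all of $|\tilde\rho_i^n-\tilde\rho(x_i,t^n)|$, $|\tilde U_i^n-\tilde U(x_i,t^n)|$ and $|(\tilde{\mathcal T}_{\nu,\theta}^{\alpha,\beta})_i^n-\tilde{\mathcal T}_{\nu,\theta}^{\alpha,\beta}(x_i,t^n)|$ are bounded by $C\|f(t^n)-f^n\|_{L_q^\infty}+C\{\|f_0\|_{L_{2,q}^\infty}+1\}\{(\Delta x)^2+\Delta v+\Delta I+\Delta v\Delta t\}$; the same holds for $|(\tilde T_\theta)_i^n-\tilde T_\theta(x_i,t^n)|$ since $\tilde T_\theta$ is an explicit affine combination of $\tilde T_\delta$ and $\tilde T_{I,\delta}$, which are handled by the $\Phi\equiv|v|^2,\,\Phi\equiv I^{2/\delta}$ cases of Lemma \ref{L.7.2} exactly as in Lemma \ref{L.7.3}. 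Denote the common right-hand side by $\mathcal E^n$.

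The second ingredient is a quantitative continuity estimate: if two sets of macroscopic data are close (in the above sense) and both sets of temperature tensors and relaxation temperatures are uniformly bounded below and above, then the corresponding polyatomic Gaussians are close in weighted $L^\infty$. For the discrete data the needed two-sided bounds are supplied by Theorem \ref{T.5.5} (through $C^n$ and $D^n$): in particular $\tilde\rho_i^n\ge\tilde\rho_{lower}>0$, $(\tilde T_\delta)_i^n\ge(\tilde T_\delta)_{lower}>0$, and $\tilde\rho_i^n,\,|\tilde U_i^n|,\,(\tilde T_\delta)_i^n$ are bounded above; combined with Lemma \ref{L.4.2} this gives $\lambda\theta(\tilde T_\delta)_{lower}\,Id\le(\tilde{\mathcal T}_{\nu,\theta})_i^n$ and a matching upper bound, hence uniform control of $((\tilde{\mathcal T}_{\nu,\theta})_i^n)^{-1}$ and $\det(\tilde{\mathcal T}_{\nu,\theta})_i^n$. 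For the continuous data the bounds come from $(\mathcal A2)$ in Theorem \ref{T.3.1} together with Proposition \ref{P.6.2}. Writing $\mathcal{M}_{\nu,\theta}$ as a smooth function $G(\rho,U,\mathcal T,T_\theta;v,I)$ on the compact parameter region carved out by these bounds, I would apply the mean value theorem: each partial derivative $\partial_\rho G$, $\partial_{U^m}G$, $\partial_{\mathcal T^{\alpha\beta}}G$, $\partial_{T_\theta}G$ is, after multiplication by the weight $(1+|v|^2+I^{2/\delta})^{q/2}$, bounded by a constant times a Gaussian with a fixed (data-independent) covariance floor, hence by a finite constant $C_G$ uniformly in $(v,I)$ — this is the same Gaussian-moment bookkeeping already carried out in the proof of Lemma \ref{L.5.9} and in Proposition \ref{Mt estimate}. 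Multiplying the parameter discrepancies by $C_G$ and summing over the (finitely many) parameter components yields
\[
\|\mathcal{M}_{\nu,\theta}(\tilde f(t^n))-\mathcal{M}_{\nu,\theta}(\tilde f^n)\|_{L_q^\infty}\le C\,\mathcal E^n,
\]
which is the claimed estimate once $\|f(t^n)-f^n\|_{L_q^\infty}$ and the $\{(\Delta x)^2+\Delta v+\Delta I+\Delta v\Delta t\}$ term are read off from the definition of $\mathcal E^n$.

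The main obstacle is the uniform control of the weighted partial derivatives of $G$ with respect to the temperature tensor entries: $\partial_{\mathcal T^{\alpha\beta}}G$ brings down factors of $(\mathcal T^{-1})_{\alpha\gamma}(v-U)^\gamma(v-U)^\delta(\mathcal T^{-1})_{\delta\beta}$ times $G$, and one must absorb the resulting polynomial-in-$v$ prefactor into the exponential decay of $G$ — this requires that the covariance stay bounded below, i.e. $(\tilde T_\delta)_{lower}>0$, which is precisely why the lower bound in Theorem \ref{T.5.5} was needed. Once one commits to working on the fixed compact parameter set determined by Theorems \ref{T.3.1} and \ref{T.5.5}, the argument is a routine application of the mean value theorem plus the Gaussian estimates $\sup_{x\ge0}x^{p}e^{-cx}<\infty$ already used repeatedly in Section 5; care is only needed to verify that the segment joining the continuous and discrete parameter tuples stays inside that compact set, which holds for $\Delta v,\Delta I$ small enough by \eqref{cond T.5.5} and for $\Delta x,\Delta t$ appearing only through the (small) error term.
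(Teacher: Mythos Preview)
Your proposal is correct and follows essentially the same approach as the paper: write the difference of Gaussians via the mean value theorem along the segment joining the continuous and discrete macroscopic tuples $(\rho,U,\mathcal{T}_{\nu,\theta},T_\theta)$, bound each partial derivative of $\mathcal{M}_{\nu,\theta}$ in the $L_q^\infty$ weight using the uniform two-sided bounds from Theorems \ref{T.3.1} and \ref{T.5.5} (together with Lemma \ref{L.4.2} and Proposition \ref{P.6.2}), and then invoke Lemma \ref{L.7.3} for the parameter discrepancies. One small clarification: the segment automatically stays in the good region by convexity (the interpolated quantities are convex combinations of the endpoints, and positive-definiteness is preserved), so no additional smallness of $\Delta x,\Delta t$ is needed there beyond what already enters through Theorem \ref{T.5.5}; the paper makes this explicit and uses the Brunn--Minkowski inequality for the determinant lower bound along the interpolation.
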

\begin{proof}
	We begin by writting
	\begin{align*}
	\mathcal{M}_{\nu,\theta}&(\tilde{f}(x_i,v_j,I_k,t^n)) - \mathcal{M}_{\nu,\theta}(\tilde{f}_{i,j,k}^n) \cr &= 
	\mathcal{M}_{\nu,\theta}(\tilde{\rho}(x_i,t^n),\tilde{U}(x_i,t^n),\tilde{\mathcal{T}}_{\nu,\theta}(x_i,t^n))(v_j,I_k) - \mathcal{M}_{\nu,\theta}(\tilde{\rho}_i^n,\tilde{U}_i^n,(\tilde{\mathcal{T}}_{\nu,\theta})_i^n)(v_j,I_k).
	\end{align*}
	Then,
	\begin{align*} 
	\mathcal{M}_{\nu,\theta}&(\tilde{\rho}(x_i,t^n),\tilde{U}(x_i,t^n),\tilde{\mathcal{T}}_{\nu,\theta}(x_i,t^n))(v_j,I_k) - \mathcal{M}_{\nu,\theta}(\tilde{\rho}_i^n,\tilde{U}_i^n,(\tilde{\mathcal{T}}_{\nu,\theta})_i^n)(v_j,I_k) \cr
	&=(\tilde{\rho}(x_i,t^n)- \tilde{\rho}_i^n)\int_0^1 \frac{\partial \mathcal{M}_{\nu,\theta}}{\partial \rho}(\eta)d\eta +(\tilde{U}(x_i,t^n)-\tilde{U}_i^n)\int_0^1 \frac{\partial \mathcal{M}_{\nu,\theta}}{\partial U}(\eta)d\eta\cr
	&\quad +\sum_{1\leq \alpha, \beta \leq 3 }(\tilde{\mathcal{T}}_{\nu,\theta}^{\alpha,\beta}(x_i,t^n)- (\tilde{\mathcal{T}}_{\nu,\theta}^{\alpha,\beta})_i^n)\int_0^1 \frac{\partial \mathcal{M}_{\nu,\theta}}{\partial \mathcal{T}_{\nu,\theta}^{\alpha,\beta}}(\eta)d\eta \cr&\quad +(\tilde{T}_{\theta}(x_i,t^n)-(\tilde{T}_{\theta})_i^n)\int_{0}^{1}\frac{\partial\mathcal{M}_{\nu,\theta} }{\partial T_{\theta}}({\eta}) d{\eta} \cr
	&\equiv J_1 + J_2 + J_3 + J_4
	\end{align*}
	where
	\[
	\frac{\partial \mathcal{M}_{\nu,\theta}}{\partial X}(\eta):=\frac{\partial \mathcal{M}_{\nu,\theta}}{\partial X}\bigg|_{X\equiv\big(\rho,U,\mathcal{T}_{\nu,\theta},T_{\theta}\big)=\big(\tilde{\rho}_i^n(\eta),\tilde{U}_i^n(\eta),(\tilde{\mathcal{T}}_{\nu,\theta})_i^n(\eta), (\tilde{T}_{\theta})_i^n(\eta)\big)}
	\]
	and
	\begin{align*}
		&\bigg(\tilde{\rho}_i^n(\eta),\tilde{U}_i^n(\eta),(\tilde{\mathcal{T}}_{\nu,\theta})_i^n(\eta),(\tilde{T}_{I,\delta})_i^n(\eta),(\tilde{T}_{\theta})_i^n(\eta),(\tilde{T}_{\delta})_i^n(\eta)\bigg)\cr&\qquad:= (1-\eta)\bigg(\tilde{\rho}(x_i,t^n),\tilde{U}(x_i,t^n),\tilde{\mathcal{T}}_{\nu,\theta}(x_i,t^n),(\tilde{T}_{I,\delta})(x_i,t^n),(\tilde{T}_{\theta})(x_i,t^n),(\tilde{T}_{\delta})(x_i,t^n)\bigg) \cr
		&\qquad\quad + \eta \bigg(\tilde{\rho}_i^n,\tilde{U}_i^n,(\tilde{\mathcal{T}}_{\nu,\theta})_i^n,(\tilde{T}_{I,\delta})_i^n ,(\tilde{T}_{\theta})_i^n,(\tilde{T}_{\delta})_i^n\bigg)
	\end{align*}
	for $\eta \in [0,1]$.
Since each macroscopic quantity is given by the convex combination of continous and discrete macroscopic fields, its estimate can be directly obtained by combining the estimates of continous solution in Theorem \ref{T.3.1} and those of discrete solution in Theorem \ref{T.5.5} as follows:
	\begin{align}\label{G-2}
		\begin{split}
&\tilde{\rho}_i^n(\eta),~\tilde{U}_i^n(\eta),~(\tilde{T}_{\delta})_i^n(\eta),~(\tilde{T}_{\theta})_i^n(\eta)  \leq C_{T^f}\cr
			&\tilde{\rho}_i^n(\eta),~(\tilde{T}_{\delta})_i^n(\eta),~ (\tilde{T}_{\theta})_i^n(\eta) \geq C_{T^f} e^{-C_{T^f}}\cr
			& k^{\top}\{ (\tilde{\mathcal{T}}_{\nu,\theta})_i^n(\eta)\}k \geq C_{T^f} e^{-C_{T^f}}|k|^2, \quad k\in \mathbb{R}^3.
		\end{split}
	\end{align} 
	On the other hand, Brum-Minkowski inequality implies that
	\begin{align*}
		\det \{(\tilde{\mathcal{T}}_{\nu,\theta})_i^n(\eta)\} &= \det \{(1-\eta)\tilde{\mathcal{T}}_{\nu,\theta}(x_i,t^n) + \eta (\tilde{\mathcal{T}}_{\nu,\theta})_i^n\}\cr
		&\geq \det \{\tilde{\mathcal{T}}_{\nu,\theta}(x_i,t^n)\}^{1-\eta} \det \{(\tilde{\mathcal{T}}_{\nu,\theta})_i^n\}^{\eta}\cr
		&\geq \{C_{T^f} e^{-C_{T^f}}\}^{1-\eta} \{C_{T^f} e^{-C_{T^f}}\}^{\eta}\cr
		&\geq C_{T^f} e^{-C_{T^f}},
	\end{align*}
	from which we have
	\begin{align*}
		\mathcal{M}_{\nu,\theta}(\eta)(v_j,I_k)&=\frac{\tilde{\rho}_i^n(\eta)\Lambda_\delta}{\sqrt{\det\left(2 \pi (\tilde{\mathcal{T}}_{\nu,\theta})_i^n(\eta) \right)}((\tilde{T}_\theta)_i^n(\eta))^\frac{\delta}{2}}\cr
		&\quad \times \exp \left({-\frac{(v_j-\tilde{U}_i^n(\eta))^{\top} ((\tilde{\mathcal{T}}_{\nu,\theta})_i^n(\eta))^{-1}(v_j-\tilde{U}_i^n(\eta))}{2} -\frac{I_k^{\frac{2}{\delta}}}{(\tilde{T}_\theta)_i^n(\eta)}}\right)\cr
		&\leq C_{T^f}\exp\left(-C_{T^f}\left(|v_j-\tilde{U}_i^n(\eta)|^2+ I_k^{\frac{2}{\delta}}\right)\right).
	\end{align*}
	Now, we return to the estimate of $J_i$ for $i=1,2,3,4$.
	We bound $J_1$ with
	\begin{align*}
		\bigg| \int_0^1 \frac{\partial \mathcal{M}_{\nu,\theta}}{\partial \rho}(\eta)d\eta \bigg| &=  \int_0^1 \frac{\mathcal{M}_{\nu,\theta}}{\tilde{\rho}_i^n(\eta)} d\eta \cr
		&\leq   \int_0^1 C_{T^f}\exp\left(-C_{T^f}(|v_j-\tilde{U}_i^n(\eta)|^2+ I_k^{\frac{2}{\delta}})\right) d\eta.
	\end{align*}
	For $J_2$, we recall from Lemma \ref{L.4.2} that
	\begin{align*}
		&\lambda\theta (\tilde{T}_{\delta})_i^n Id \leq (\tilde{\mathcal{T}}_{\nu,\theta})_i^n.
	\end{align*}
	This, combined with Proposition \ref{P.6.2}, gives
	\begin{align*}
		C\theta (\tilde{T}_{\delta})_i^n(\eta) Id \leq (\tilde{\mathcal{T}}_{\nu,\theta})_i^n(\eta).
	\end{align*}
	Now, we consider the following inequality:
	\begin{align*}
	\bigg|\frac{\partial \mathcal{M}_{\nu,\theta}}{\partial U}(\eta) \bigg| \leq \left( \left|((\tilde{\mathcal{T}}_{\nu,\theta})_i^n(\eta))^{-1}(v_j-\tilde{U}_i^n(\eta))\right| + \left|(v_j-\tilde{U}_i^n(\eta))^{\top} ((\tilde{\mathcal{T}}_{\nu,\theta})_i^n(\eta))^{-1}\right| \right) \mathcal{M}_{\nu,\theta}(\eta).
	\end{align*}
	To estimate the upper bound, we introduce $X=v_j-\tilde{U}_i^n(\eta)$ and obtain
	\begin{align}\label{G-3}
	\begin{split}
		|X^{\top}((&\tilde{\mathcal{T}}_{\nu,\theta})_i^n(\eta))^{-1}|\cr
		&\leq \sup_{|Y|\leq 1}|X^{\top}((\tilde{\mathcal{T}}_{\nu,\theta})_i^n(\eta))^{-1} Y|\cr
		&\leq \sup_{|Y|\leq 1}\left|(X+Y)^{\top}((\tilde{\mathcal{T}}_{\nu,\theta})_i^n(\eta))^{-1} (X+Y) - X^{\top}((\tilde{\mathcal{T}}_{\nu,\theta})_i^n(\eta))^{-1} X - Y^{\top}((\tilde{\mathcal{T}}_{\nu,\theta})_i^n(\eta))^{-1} Y\right|\cr
		&\leq \frac{C}{\theta} \sup_{|Y|\leq 1}\bigg|\frac{|X+Y|^2 - |X|^2 - |Y|^2}{((\tilde{T}_{\delta})_i^n(\eta))^{-1}}\bigg|\cr
		&\leq \frac{C}{\theta }(1+|v_j-\tilde{U}(\eta)|^2).
	\end{split}
	\end{align}
	In the last line, we use Lemma \ref{L.4.2}. Similarly, we compute
	\begin{align}\label{G-4}
	|((\tilde{\mathcal{T}}_{\nu,\theta})_i^n(\eta))^{-1}X|\leq \frac{C}{\theta }(1+|v_j-\tilde{U}(\eta)|^2).
	\end{align}
	Consequently,
	\begin{align*}
	\bigg| \int_0^1 \frac{\partial \mathcal{M}_{\nu,\theta}}{\partial U}(\eta) d\eta \bigg|
	&\leq   \int_0^1 \frac{C_{T^f}}{\theta }(1+|v_j-\tilde{U}(\eta)|^2) \exp\left(-C_{T^f}(|v_j-\tilde{U}_i^n(\eta)|^2+ I_k^{\frac{2}{\delta}})\right) d\eta.
	\end{align*}
	To estimate $J_3$, for $1\leq \alpha,\beta \leq 3$, we compute 
	\begin{align*}
	\frac{\partial\mathcal{M}_{\nu,\theta}}{\partial \mathcal{T}_{\nu,\theta}^{\alpha,\beta}}({\eta})=
	\frac{1}{2}
	\bigg[&-\frac{1}{\det(\tilde{\mathcal{T}}_{\nu,\theta})_i^n(\eta)}\frac{\partial\det(\mathcal{T}_{\nu,\theta})}{\partial\mathcal{T}_{{\nu,\theta}}^{\alpha,\beta}}(\eta)\cr
	&+(v_j-\tilde{U}_i^n(\eta))^{\top}\tilde{\mathcal{T}}_{\nu,\theta}^{-1}(\eta)\left(\frac{\partial(\mathcal{T}_{\nu,\theta})}{\partial \mathcal{T}_{\nu,\theta}^{\alpha,\beta}}(\eta)\right)\tilde{\mathcal{T}}_{\nu,\theta}^{-1}(\eta)(v_j-\tilde{U}(\eta))\bigg]\mathcal{M}_{\nu,\theta}({\eta}),
	\end{align*}
	where 
	\begin{align*}
		\frac{\partial\det(\mathcal{T}_{\nu,\theta})}{\partial\mathcal{T}_{{\nu,\theta}}^{\alpha,\beta}}(\eta):=\frac{\partial\det(\mathcal{T}_{\nu,\theta})}{\partial\mathcal{T}_{{\nu,\theta}}^{\alpha,\beta}}\bigg|_{\mathcal{T}_{{\nu,\theta}}=(\tilde{\mathcal{T}}_{{\nu,\theta}})_i^n(\eta)},\quad \frac{\partial(\mathcal{T}_{\nu,\theta})}{\partial \mathcal{T}_{\nu,\theta}^{\alpha,\beta}}(\eta):=\frac{\partial(\mathcal{T}_{\nu,\theta})}{\partial \mathcal{T}_{\nu,\theta}^{\alpha,\beta}}\bigg|_{\mathcal{T}_{\nu,\theta}=(\tilde{\mathcal{T}}_{\nu,\theta})_i^n(\eta)}.
	\end{align*}
	Now, we prove the following estimates: 
	\begin{align*}
	(\mathcal{F}_1) &:~ \left|(v_j-\tilde{U}(\eta))^{\top}\tilde{\mathcal{T}}_{\nu,\theta}^{-1}(\eta)\left(\frac{\partial \mathcal{T}_{\nu,\theta}}{\partial\mathcal{T}_{\nu,\theta}^{\alpha,\beta}}(\eta)\right)\tilde{\mathcal{T}}_{\nu,\theta}^{-1}(\eta)(v_j-\tilde{U}(\eta))\right|\leq \bigg(\frac{C}{\theta }(1+|v_j-\tilde{U}(\eta)|^2)\bigg)^2  \cr
	(\mathcal{F}_2) &:~\det(\tilde{\mathcal{T}}_{\nu,\theta})_i^n(\eta)\geq \theta^3 C.\cr
	(\mathcal{F}_3) &:~\left|\frac{\partial \det(\mathcal{T}_{\nu,\theta})_i^n}{\partial\mathcal{T}^{\alpha,\beta}_{\nu,\theta}}(\eta)\right|
	\leq C.\cr
	\end{align*}
	\noindent$\bullet~(\mathcal{F}_1)$: We use that $\tilde{\mathcal{T}}_{\nu,\theta}^{-1}$ is symmetric matrix and $\tilde{\mathcal{T}}^{\alpha, \beta}_{\nu,\theta } = \tilde{\mathcal{T}}^{\beta,\alpha}_{\nu,\theta }$ to obtain
	\[
	\left|X^{\top}\left(\frac{\partial\mathcal{T}_{\nu,\theta}}{\partial\mathcal{T}^{\alpha, \beta}_{\nu,\theta }}(\eta)\right)Y\right| = |X^\alpha Y^\beta+X^\beta Y^\alpha|\leq|X||Y|.
	\]
	This gives
	\begin{align*}
	&\left|(v_j-\tilde{U}(\eta))^{\top}\tilde{\mathcal{T}}_{\nu,\theta}^{-1}(\eta)\left(\frac{\partial\mathcal{T}_{\nu,\theta}}{\partial\mathcal{T}_{\nu,\theta}^{\alpha,\beta}}(\eta)\right)\tilde{\mathcal{T}}_{\nu,\theta}^{-1}(\eta)(v_j-\tilde{U}(\eta))\right|\cr
	&\qquad\qquad\qquad \leq
	|(v_j-\tilde{U}(\eta))^{\top}((\tilde{\mathcal{T}}_{\nu,\theta})_i^n(\eta))^{-1}||((\tilde{\mathcal{T}}_{\nu,\theta})_i^n(\eta))^{-1}(v_j-\tilde{U}(\eta))|\cr
	&\qquad\qquad\qquad \leq \bigg(\frac{C}{\theta }(1+|v_j-\tilde{U}(\eta)|^2)\bigg)^2,
	\end{align*}
	where we use \eqref{G-3} and \eqref{G-4}.\newline
	$\bullet~(\mathcal{F}_2)$:
	By (\ref{G-2}), we have
	\[
	\det(\tilde{\mathcal{T}}_{\nu,\theta})_i^n(\eta)\geq \big(C\theta (\tilde{T}_{\delta})_i^n(\eta) ^3\big)=\theta^3 C.
	\]\newline
	$\bullet~(\mathcal{F}_3)$:	
	Recalling the definition of $\tilde{\Theta}$, $\tilde{T}_{tr}$, $\tilde{T}_{\delta}$ and $\tilde{\mathcal{T}}_{\nu,\theta}$, for $1\leq \alpha, \beta \leq 3$, we have 
	\begin{align*}
		&|\tilde{\Theta}^{\alpha,\beta}(x_i,t^n)| \leq 3 \tilde{T}_{tr}(x_i,t^n)\cr
		&\tilde{T}_{\delta}(x_i,t^n)= \frac{3}{3+\delta}\tilde{T}_{tr}(x_i,t^n)+\frac{\delta}{3+\delta}\tilde{T}_{I,\delta}(x_i,t^n)\geq \frac{3}{3+\delta}\tilde{T}_{tr}(x_i,t^n).
	\end{align*}
	and
	\begin{align*}
	|\tilde{\mathcal{T}}^{\alpha,\beta}_{\nu,\theta}(x_i,t^n)|&\leq \theta \tilde{T}_\delta(x_i,t^n) + (1-\theta)\big\{(1-\nu)\tilde{T}_{tr}(x_i,t^n) + \nu |\tilde{\Theta}^{\alpha,\beta}(x_i,t^n)| \big\}\cr
	&\leq \theta \tilde{T}_\delta(x_i,t^n) + (1-\theta)(1+2\nu)\tilde{T}_{tr}(x_i,t^n) \cr
	&\leq \theta \tilde{T}_\delta(x_i,t^n) + (1-\theta)(1+2\nu)\frac{3+\delta}{3}\tilde{T}_{\delta}(x_i,t^n) \cr
	&\leq (1+2\nu)\frac{3+\delta}{3}\tilde{T}_{\delta}(x_i,t^n).		
	\end{align*}
That is,
	\begin{align*}
	&\left|(\tilde{\mathcal{T}}^{\alpha,\beta}_{\nu,\theta})_i^n\right| \leq C(\tilde{T}_{\delta})_i^n,
	\end{align*}
	which implies
	\begin{align*}
	&|(\tilde{\mathcal{T}}^{\alpha,\beta}_{\nu,\theta})_i^n(\eta)| \leq C(\tilde{T}_{\delta})_i^n (\eta)\leq C.
	\end{align*}
	For simplicity, we only cover the case:  $(\alpha,\beta)=(1,2)$. A direct calculation gives
	\begin{align*}
	\frac{\partial\det\mathcal{T}_{\nu,\theta}}{\partial\mathcal{T}^{1,2}_{\nu,\theta}}(\eta)
	=\tilde{\mathcal{T}}^{2,3}_{\nu,\theta}(\eta)\tilde{\mathcal{T}}^{3,1}_{\nu,\theta}(\eta)-\tilde{\mathcal{T}}^{3,3}_{\nu,\theta}(\eta)
	\tilde{\mathcal{T}}^{2,1}_{\nu,\theta}(\eta),
	\end{align*}
	which is a second order polynomial of $(\tilde{\mathcal{T}}^{\alpha,\beta}_{\nu,\theta})_i^n(\eta)$ for $1\leq \alpha,\beta \leq 3$. Therefore,
	\begin{align*}
	\bigg|\frac{\partial\det\mathcal{T}_{\nu,\theta}}{\partial\mathcal{T}^{1,2}_{\nu,\theta}}(\eta)\bigg|\leq C,
	\end{align*}
	for some constant $C$. This completes the proof for claims.
	
	Using $(\mathcal{F}_1)$, $(\mathcal{F}_2)$ and $(\mathcal{F}_3)$, we can bound the integral $J_3$ as 
	\begin{align*}
	\left|\int_{0}^{1}\frac{\partial\mathcal{M}_{\nu,\theta}}{\partial \mathcal{T}_{\nu,\theta}^{\alpha,\beta}}({\eta})d{\eta}\right|
	\leq \int_{0}^{1} \bigg[\frac{C}{\theta^3} + \bigg(\frac{C}{\theta }(1+|v_j-\tilde{U}(\eta)|^2)\bigg)^2\bigg]\mathcal{M}_{\nu,\theta}({\eta})d{\eta}.
	\end{align*}
	Then, 
	\begin{align*}
	\left|\int_{0}^{1}\frac{\partial\mathcal{M}_{\nu,\theta} }{\partial \mathcal{T}_{\nu,\theta}^{\alpha,\beta}}({\eta})d{\eta}\right|
	&\leq
	C\left(\frac{1}{\theta^2} + \frac{1}{\theta^3}\right)\int_{0}^{1} \big(1+|v_j-\tilde{U}(\eta)|^2\big)^2\exp{\left({-C_{T^f}(|v_j-\tilde{U}(\eta)|^2+I_k^{2/\delta})}\right)}d{\eta}.
	\end{align*}
	For $J_4$, we begin with
	\begin{align*}
		\frac{\partial \mathcal{M}_{\nu,\theta}}{\partial T_{\theta}}(\eta) &= \left(\frac{2I_k^{\frac{2}{\delta}}-\delta \tilde{T}_{\theta}(\eta)}{2(\tilde{T}_{\theta}(\eta))^2}\right)\mathcal{M}_{\nu,\theta}\leq \left( \frac{1}{(\tilde{T}_{\theta}(\eta))^2} + \frac{\delta}{\tilde{T}_{\theta}(\eta)}\right)\left(1+I_k^{\frac{2}{\delta}} \right)\mathcal{M}_{\nu,\theta}.
	\end{align*}
	Since there exist a lower bound for $\tilde{T}_{\theta}(\eta)$, we have
	\begin{align*}
	\bigg|\int_{0}^{1}\frac{\partial\mathcal{M}_{\nu,\theta}}{\partial T_{\theta}}(\eta)d{\eta}\bigg|
	&\leq
	C(1+ I_k^{\frac{2}{\delta}})\exp\left(-C_{T^f}(|v_j-\tilde{U}_i^n(\eta)|^2+ I_k^{\frac{2}{\delta}})\right).
	\end{align*}
	Combining all the estimates for $J_i$, $i=1,2,3,4$, we finally obtain
	\begin{align}\label{G-8}
	\begin{split}
	&|\mathcal{M}_{\nu,\theta}(\tilde{f}(x_i,v_j,I_k,t^n)) - \mathcal{M}_{\nu,\theta}(\tilde{f}_{i,j,k}^n)|\cr
	&\leq C\big(1+\frac{1}{\theta} + \frac{1}{\theta^2} + \frac{1}{\theta^3}\big)\Big\{|\tilde{\rho}-\tilde{\rho}_i^n|+|\tilde{U}-\tilde{U}_i^n|+\sum_{1\leq \alpha, \beta \leq 3}|\tilde{\mathcal{T}}_{\nu,\theta}^{\alpha,\beta}-(\tilde{\mathcal{T}}_{\nu,\theta}^{\alpha,\beta})_i^n|+|\tilde{T}_{\theta}-(\tilde{T}_{\theta})_i^n|\Big\} \cr
	&\quad \times \big(1+|v_j-\tilde{U}(\eta)|^2 +|v_j-\tilde{U}(\eta)|^4 +  I_k^{\frac{2}{\delta}}\big) e^{-C(|v_j-\tilde{U}_i^n(\eta)|^2+I_k^{2/\delta})}.
	\end{split}
	\end{align}
	Now, recall that $\tilde{U}_i^n(\eta)\leq C_{T^f}$ to derive
	\begin{align*}
	(1+|v_j|^2+I_k^{\frac{2}{\delta}})^\frac{q}{2} &= (1+|v_j-\tilde{U}_i^n(\eta)+\tilde{U}_i^n(\eta)|^2+I_k^{\frac{2}{\delta}})^\frac{q}{2}\leq C(1+|v_j-\tilde{U}_i^n(\eta)|^2+I_k^{\frac{2}{\delta}})^\frac{q}{2},
	\end{align*}
	which further gives
	\begin{align}\label{G-9}
	\begin{split}
	&(1+|v_j|^2+I_k^{\frac{2}{\delta}})^\frac{q}{2} \big(1+|v_j-\tilde{U}(\eta)|^2 +|v_j-\tilde{U}(\eta)|^4 +  I_k^{\frac{2}{\delta}}\big) e^{-C(|v_j-\tilde{U}_i^n(\eta)|^2+I_k^{2/\delta})}\cr
	&\leq C(1+|v_j-\tilde{U}_i^n(\eta)|^2+I_k^{\frac{2}{\delta}})^\frac{q}{2}\big(1+|v_j-\tilde{U}(\eta)|^2 +|v_j-\tilde{U}(\eta)|^4 +  I_k^{\frac{2}{\delta}}\big) e^{-C(|v_j-\tilde{U}_i^n(\eta)|^2+I_k^{2/\delta})}\cr
	&\leq C(1+|v_j-\tilde{U}_i^n(\eta)|^2+I_k^{\frac{2}{\delta}})^{\frac{q}{2}+3} e^{-C(|v_j-\tilde{U}_i^n(\eta)|^2+I_k^{2/\delta})}.
	\end{split}
	\end{align}
	Note that the last upper bound can be understood as the form of $C(1+x)^{\frac{q}{2}+3}e^{-Cx}$, hence it is uniformly bounded for $x\geq 0$. To obtain desired estimate, we multiply $(1+|v_j|^2+I_k^{\frac{2}{\delta}})^\frac{q}{2}$ on both sides of \eqref{G-8} and take supremum, then we have from \eqref{G-9} that
	\begin{align*}
	&\|\mathcal{M}_{\nu,\theta}(\tilde{f}(x_i,v_j,I_k,t^n)) - \mathcal{M}_{\nu,\theta}(\tilde{f}_{i,j,k}^n)\|_{L_q^\infty}\cr
	&\leq C\big(1+\frac{1}{\theta} + \frac{1}{\theta^2} + \frac{1}{\theta^3}\big)\Big\{|\tilde{\rho}-\tilde{\rho}_i^n|+|\tilde{U}-\tilde{U}_i^n|+\sum_{1\leq \alpha, \beta \leq 3}|\tilde{\mathcal{T}}_{\nu,\theta}^{\alpha,\beta}-(\tilde{\mathcal{T}}_{\nu,\theta}^{\alpha,\beta})_i^n|+|\tilde{T}_{\theta}-(\tilde{T}_{\theta})_i^n|\Big\}.
	\end{align*}
	This, together with Lemma \ref{L.7.3}, gives the desired estimate.
\end{proof}

\section{Proof of Theorem \ref{T.3.2}}
Here, we prove our main theorem. We first subtract \eqref{consistent form} from \eqref{B-6} and take $L_q^\infty$-norm: 
\begin{align*}
&\|f^{n+1} - f(t^{n+1})\|_{L_q^\infty}\cr &\qquad=\frac{\kappa}{\kappa+A_{\nu,\theta}\Delta t} \|\tilde{f}^n- \tilde{f}(t^n)\|_{L_q^\infty}  + \frac{A_{\nu,\theta}\Delta t}{\kappa+A_{\nu,\theta}\Delta t} \|\mathcal{M}_{\nu,\theta}(\tilde{f}^n) - \mathcal{M}_{\nu,\theta}(\tilde{f})(t^n)\|_{L_q^\infty}\cr
&\quad\qquad+ \frac{A_{\nu,\theta}}{\kappa+A_{\nu,\theta}\Delta t} \|R_1\|_{L_q^\infty} + \frac{A_{\nu,\theta}}{\kappa+A_{\nu,\theta}\Delta t} \|R_2\|_{L_q^\infty}.
\end{align*}
Next, we recall Lemma \ref{L.6.2}, Lemma \ref{L.7.1} and Proposition \ref{P.7.1}:
\begin{align*}
&\|R_1\|_{L_q^\infty} + \|R_2\|_{L_q^\infty}\leq C(\Delta t)^2,\cr
&\|\tilde{f}(t^n) - \tilde{f}^n\|_{L_q^\infty} \leq \|f(t^n) - f^n\|_{L_q^\infty} + C (\Delta x)^2,\cr
&\|\mathcal{M}_{\nu,\theta}(\tilde{f}(t^n)) - \mathcal{M}_{\nu,\theta}(\tilde{f}^n)\|_{L_q^\infty} \leq C \left(\|f(t^n) - f^n\|_{L_q^\infty} + \{(\Delta x)^2 + \Delta v + \Delta I + \Delta v \Delta t\}\right),
\end{align*}
where $C$ is a constant which can bounded regardless of the values of $\Delta t$. 
From these estimates, we obtain
\begin{align}\label{H-1}
\begin{split}
\|f^{n+1} - f(t^{n+1})\|_{L_q^\infty}
&\leq \frac{\kappa + C A_{\nu,\theta}\Delta t}{\kappa+A_{\nu,\theta}\Delta t} \|f(t^n) - f^n\|_{L_q^\infty} \cr
&+ \frac{C}{\kappa+A_{\nu,\theta}\Delta t} \bigg( \kappa(\Delta x)^2 + A_{\nu,\theta}\Delta t\big( (\Delta x)^2 +\Delta v + \Delta I + \Delta v \Delta t + \Delta t\big)\bigg).
\end{split}
\end{align}
For the sake of simplicity, we introduce 
$$\Gamma_n := \|f^n - f(t^n)\|_{L_q^\infty}$$
and
$$P(\Delta x, \Delta v, \Delta I, \Delta t):= \frac{C}{\kappa+A_{\nu,\theta}\Delta t} \bigg( \kappa(\Delta x)^2 + A_{\nu,\theta}\Delta t\big( (\Delta x)^2 + \Delta v + \Delta I + \Delta v \Delta t + \Delta t\big)\bigg).$$
Then, we write \eqref{H-1} in a recurrence form as follows:
\begin{align*}
	\Gamma_{n+1} \leq ( 1 + Q\Delta t) \Gamma_n + P(\Delta x, \Delta v, \Delta I, \Delta t)
\end{align*} 
where $Q:=\frac{C A_{\nu,\theta}}{\kappa+A_{\nu,\theta}\Delta t}$.
Since it is assumed that there is no error in the initial step:
\[
\Gamma_0 = \|f^0 - f(t^0)\|_{L_q^\infty}=0,
\]
we have from $n \Delta t \leq N_t \Delta t = T^f$ that 
\begin{align*}
\Gamma_{n+1} &\leq ( 1+Q\Delta t )^{n+1}\Gamma_0 + \sum_{k=0}^{n} ( 1+Q\Delta t )^k P(\Delta x, \Delta v, \Delta I, \Delta t)\cr
&\leq \frac{(1+Q\Delta t) ^{N_t}-1}{(1+Q\Delta t)-1}P(\Delta x, \Delta v, \Delta I, \Delta t)\cr
&\leq \frac{1}{Q\Delta t}e^{Q T^f}P(\Delta x, \Delta v, \Delta I, \Delta t).
\end{align*}
In the last line, we use $(1+x)^n \leq e^{nx}$. Using $\Delta v <\frac{1}{2}$ and
\begin{align*}
	\frac{P(\Delta x, \Delta v, \Delta I, \Delta t)}{\Delta t}\leq  \frac{C(\kappa+A_{\nu,\theta})}{\kappa+A_{\nu,\theta}\Delta t} \bigg( \frac{(\Delta x)^2}{\Delta t} + (\Delta x)^2 + \Delta v + \Delta I + \Delta v \Delta t + \Delta t\bigg),
\end{align*}
we derive
\begin{align*}
	\Gamma_{n+1} 
	\leq  \frac{2}{Q}e^{Q T^f}\frac{C(\kappa+A_{\nu,\theta})}{\kappa+A_{\nu,\theta}\Delta t} \bigg( \frac{(\Delta x)^2}{\Delta t} + (\Delta x)^2 + \Delta v + \Delta I + \Delta t\bigg).
\end{align*}
This completes the proof.

\section*{Conclusion}
In this paper, we present an implicit semi-Lagrangian scheme for the ES-BGK model for polyatomic gases. The main result is the convergence estimate of the scheme using argument previously adopted in \cite{RSY} for BGK model and \cite{RY} for ES-BGK model for monatomic gas. For the proof of convergence estimate, the lower bound estimate for polyatomic temperature is crucially used to prevent the discrete polyatomic ellipsoidal Gaussian from degenerating into Dirac delta. The restriction of our result is that is that convergence estimate holds for fixed value of Knudsen number and relaxation parameter $\theta$. Our proof covers the 
biatomic molecules with no vibrational degree of freedom. In future work we shall try to remove some of these restrictions, in particular we plan to make use of the asymptotic preserving property of the method to obtain a convergence estimate which is
uniform in the Knudsen number.

\section*{Acknowledgments}
S.-Y. Cho has been supported by ITN-ETN Horizon 2020 Project ModCompShock, Modeling and Computation on Shocks and Interfaces, Project Reference 642768. 
S.-B. Yun has been supported by Samsung Science and Technology Foundation under Project Number SSTF-BA1801-02. 
All the authors would like to thank the Italian Ministry of Instruction, University and Research (MIUR) to support this research with funds coming from PRIN Project 2017 (No. 2017KKJP4X entitled “Innovative numerical methods for evolutionary partial differential equations and applications”). 
S. Boscarino has been supported by the University of Catania (“Piano della Ricerca 2016/2018, Linea di intervento 2”). S. Boscarino and G. Russo are members of the INdAM Research group GNCS. 

\bibliographystyle{amsplain}

\begin{thebibliography}{10}


\bibitem{ABLP} Andries, P., Bourgat, J.-F., Le Tallec, P.; Perthame, B.: Numerical comparison between the Boltzmann and ES-BGK models for rarefied gases. Comput. Methods Appl. Mech. Engrg. 191 (2002), no. 31, 3369–3390.

\bibitem{ALPP} Andries, P., Le Tallec, P., Perlat, J.-P., Perthame, B.: The Gaussian-BGK model of Boltzmann equation
with small Prandtl number. Eur. J. Mech. B Fluids {\bf 19} (2000), no. 6, 813-830.

\bibitem{BDMMMM} Baranger, C., Dauvois, Y., Marois, G., Mathé, J., Mathiaud, J.,  Mieussens, L.: A BGK model for high temperature rarefied gas flows. European Journal of Mechanics-B/Fluids, 80 (2020), 1-12.


\bibitem{BIP} Bernard, F., Iollo, A., Puppo, G.: BGK polyatomic model for rarefied flows. J. Sci. Comput. 78 (2019), no. 3, 1893–1916.

\bibitem{BGK} Bhatnagar, P. L., Gross, E. P. and Krook, M.: A model for collision processes in gases. Small amplitude
process in charged and neutral one-component systems, Physical Revies, 94 (1954), 511-525.


\bibitem{BCRY} Boscarino, S., Cho, S.-Y., Russo, G. and Yun, S.-B.: High order conservative Semi-Lagrangian scheme for the BGK model of the Boltzmann equation, arXiv preprint 	arXiv:1905.03660 (2019).





\bibitem{BS2000} Brull, S., Schneider, J.: A new approach for the ellipsoidal statistical model. Contin. Mech. Thermodyn. 20 (2008), no. 2, 63–74. 


\bibitem{BS} Brull, S., Schneider, J. : On the ellipsoidal statistical model for polyatomic gases.
Contin. Mech. Thermodyn. {\bf20} (2009), no. 8, 489–508.



\bibitem{CJR} Caflisch, R. E., Jin, S., and Russo, G.: Uniformly accurate schemes for hyperbolic systems with relaxation. SIAM Journal on Numerical Analysis, 34(1) (1997), 246-281.

\bibitem{CL} Cai, Z., Li, R.: The NRxx method for polyatomic gases. Journal of Computational Physics, 267, 63-91 (2014).



\bibitem{CMS} Crouseilles, N., Mehrenberger, M., Sonnendrücker, E.: Conservative semi-Lagrangian schemes for Vlasov equations. Journal of Computational Physics, 229(6) (2010), 1927-1953.


\bibitem{DP} Dimarco, G., Pareschi, L.: Numerical methods for kinetic equations. Acta Numerica, 23 (2014), 369-520.


\bibitem{CRG} Elisabetta, C., Ferretti, R., Russo, G.: "A Weighted Essentially Nonoscillatory, Large Time-Step Scheme for Hamilton--Jacobi Equations." SIAM Journal on Scientific Computing 27.3 (2005): 1071-1091.


\bibitem{FSB} Filbet, F., Sonnendrücker, E., and Bertrand, P.: Conservative numerical schemes for the Vlasov equation. Journal of Computational Physics, 172(1) (2001), 166-187.

\bibitem{GRS2} Groppi, M., Russo, G.,  Stracquadanio, G.: Boundary conditions for semi-Lagrangian methods for the BGK model. Communications in Applied and Industrial Mathematics, 7(3) (2016), 138-164.


\bibitem{GRS} Groppi, M., Russo, G., Stracquadanio, G.: High order semi-Lagrangian methods for the BGK equation. Commun. Math. Sci. 14 (2016), no. 2, 389–414.


\bibitem{GRS3} Groppi, M., Russo, G.,  Stracquadanio, G.: Semi-Lagrangian Approximation of BGK Models for Inert and Reactive Gas Mixtures. In Meeting on Particle Systems and PDE's (pp. 53-80). Springer, Cham. (2016, November)



\bibitem{H} Holway, L.H.: Kinetic theory of shock structure using and ellipsoidal distribution function. Rarefied Gas Dynamics, Vol. I (Proc. Fourth Internat. Sympos., Univ. Toronto, 1964), Academic Press, New York, (1966), pp. 193-215.



\bibitem{J} Jin, S.: Runge-Kutta methods for hyperbolic conservation laws with stiff relaxation terms. Journal of Computational Physics, 122(1) (1995), 51-67.

\bibitem{KPP} Klingenberg, C., Pirner, M., Puppo, G.: A consistent kinetic model for a two-component mixture of polyatomic molecules. arXiv preprint (2018) arXiv:1806.11486.


\bibitem{KA} Kosuge, S., Aoki, K.: Shock-wave structure for a polyatomic gas with large bulk viscosity. Physical Review Fluids, 3(2) (2018), 023401.


\bibitem{KAG} Kosuge, S., Aoki, K., Goto, T.: Shock wave structure in polyatomic gases: Numerical analysis using a model Boltzmann equation. In AIP Conference Proceedings (Vol. 1786, No. 1, p. 180004)(2016, November). AIP Publishing LLC.

\bibitem{KKA} Kosuge, S., Kuo, H. W., Aoki, K.: A kinetic model for a polyatomic gas with temperature-dependent specific heats and its application to shock-wave structure. Journal of Statistical Physics, 177(2) (2019), 209-251.


\bibitem{PR} Pareschi, L., Russo, G.: Implicit-explicit Runge-Kutta schemes for stiff systems of differential equations. Recent trends in numerical analysis, 3 (2000), 269-289.


\bibitem{Park sa jun PHD thesis} Park, S.: Mathematical studies on the ellipsoidal BGK model of the Boltzmann equation for polyatomic particles, Thesis (Ph.D.)-- Sungkyunkwan university : Department of Mathematics 2018. 8



\bibitem{PY} Park, S., Yun, S.-B.: "Cauchy problem for the ellipsoidal BGK model for polyatomic particles." Journal of Differential Equations, 266(11) (2019), 7678-7708.

\bibitem{PY1} Park, S., Yun, S.-B.: Entropy production estimates for the polyatomic ellipsoidal BGK model. Appl. Math. Lett. 58 (2016), 26–33.




\bibitem{PP} Pieraccini, S. and Puppo, G.: Implicit-explicit schemes for BGK kinetic equations,
J. Sci. Comput. 32 (2007) 1-28.


\bibitem{P} Pirner, M.: A BGK model for gas mixtures of polyatomic molecules allowing for slow and fast relaxation of the temperatures. Journal of Statistical Physics, 173(6) (2018), 1660-1687.



\bibitem{QC} Qiu, J. M., and Christlieb, A.: A conservative high order semi-Lagrangian WENO method for the Vlasov equation. Journal of Computational Physics, 229(4) (2010), 1130-1149.

\bibitem{QS} Qiu, J. M., and  Shu, C. W.: Conservative high order semi-Lagrangian finite difference WENO methods for advection in incompressible flow. Journal of Computational Physics, 230(4) (2011), 863-889.

\bibitem{RF} Russo, G. and Filbet, F.: Semilagrangian schemes applied to moving boundary problems for the BGK model of rarefied gas dynamics.

\bibitem{RS} Russo, G. and Santagati, P.: A new class of large time step methods for the BGK models of the
Boltzmann equation, arXiv:1103.5247v1, 2011.

\bibitem{RSY} Russo, G. and Santagati, P. and Yun, S.-B.: Convergence of a semi-Lagrangian scheme for the BGK model of the Boltzmann equation. SIAM J. Numer. Anal. 50 (2012), no. 3, 1111–1135.

\bibitem{RY} Russo, G. and Yun, S. B.: Convergence of a semi-Lagrangian scheme for the ellipsoidal BGK model of the Boltzmann equation. SIAM J. Numer. Anal., 56(6) (2018), 3580-3610.


\bibitem{SP} Santagati, P.: High order semi-Lagrangian schemes for the BGK model of the Boltzmann equation. Department of Mathematics and Computer Science, University of Catania. Diss. PhD. thesis, 2007.



\bibitem{SRB} Sonnendrücker, E., Roche, J., Bertrand, P., Ghizzo, A.: The semi-Lagrangian method for the numerical resolution of the Vlasov equation. Journal of computational physics, 149(2) (1999), 201-220.


\bibitem{TRQ} Xiong, T. , Russo, G. and Qiu, J. M.: Conservative Multi-Dimensional Semi-Lagrangian Finite Difference Scheme: Stability and Applications to the Kinetic and Fluid Simulations. arXiv:1607.07409.

\bibitem{Y} Yun, S.-B.: Entropy production for ellipsoidal BGK model of the Boltzmann equation. Kinet. Relat. Models 9 (2016), no. 3, 605–619.

\bibitem{Y1} Yun, S. B.: Ellipsoidal BGK model for polyatomic molecules near Maxwellians: A dichotomy in the dissipation estimate. Journal of Differential Equations, 266(9) (2019), 5566-5614.

\end{thebibliography}

\end{document}